\theoremstyle{plain}
\newtheorem{theorem}{Theorem}[section]
\newtheorem{lemma}[theorem]{Lemma}
\newtheorem{proposition}[theorem]{Proposition}
\theoremstyle{definition}
\newtheorem{definition}[theorem]{Definition}
\theoremstyle{remark}
\DeclareMathOperator*{\argmin}{argmin}
\DeclareMathOperator*{\arccosh}{arcosh}
\newcommand{\RST}{\text{RST}}
\newcommand{\DSF}{\text{DSF}}
\newcommand{\CFD}{\text{CFD}}
\newcommand{\MBD}{\text{MBD}}
\newcommand{\CO}{\text{CO}}
\newcommand{\Cyl}{\text{Cyl}}
\newcommand{\Stab}{\text{Stab}}
\newcommand{\Cone}{\text{Cone}}
\newcommand{\Cl}{\text{Cl}}
\newcommand{\Rad}{\text{Rad}}
\newcommand{\Vois}{\text{Vois}}
\newcommand{\Stop}{\text{Stop}}
\newcommand{\hit}{\text{hit}}
\newcommand{\Reg}{\mbox{Reg}}
\newcommand{\Vol}{\mbox{Vol}}
\newcommand{\Activ}{\mbox{Inhib}}
\newcommand{\fluct}{\text{fl}}
\newcommand{\decay}{\text{dec}}
\newcommand{\ball}{\text{ball}}
\newcommand{\geom}{\text{geom}}
\newcommand{\dist}{\text{dis}}
\newcommand{\ind}{\mathbf{1}}
\newcommand{\tvc}[1]{#1}
\newcommand{\dave}[1]{#1}
\title{Hyperbolic Radial Spanning Tree\footnote{Acknowledgments. This work has been supported by the LAMAV (Université Polytechnique des Hauts de France) and the Laboratoire P. Painlevé (Université de Lille). It has also benefitted from the GdR GeoSto 3477 from CNRS, the Labex CEMPI (ANR-11-LABX-0007-01), the Labex B\'ezout (ANR-10-LABX-0058) and the ANR PPPP (ANR-16-CE40-0016).}}
\author{David Coupier \footnote{Institut Mines T\'el\'ecom Nord Europe, F-59650 Villeneuve d'Ascq, France \texttt{david.coupier@imt-nord-europe.fr}}, \quad Lucas Flammant \footnote{Univ. de Valenciennes, CNRS, EA 4015 - LAMAV, F-59313 Valenciennes Cedex 9, France} \quad and \quad Viet Chi Tran \footnote{LAMA, Univ Gustave Eiffel, Univ Paris Est Creteil, CNRS, F-77454 Marne-la-Vall\'ee, France; IRL 3457, CRM-CNRS, Université de Montréal, Canada. 
\texttt{chi.tran@univ-eiffel.fr}}}
\date{\today}
\begin{document}

\maketitle

\begin{abstract}
We define and analyze an extension to the $d$-dimensional hyperbolic space of the Radial Spanning Tree (RST) introduced by Baccelli and Bordenave in the two-dimensional Euclidean space (2007). In particular, we will focus on the description of the infinite branches of the tree. The properties of the two-dimensional Euclidean RST are extended to the hyperbolic case in every dimension: almost surely, every infinite branch admits an asymptotic direction and each asymptotic direction is reached by at least one infinite branch. Moreover, the branch converging to any deterministic asymptotic direction is unique almost surely. To obtain results for any dimension, a completely new approach is considered here. \tvc{Our strategy mainly involves the two following ingredients, that rely on the hyperbolic Directed Spanning Forest (DSF) introduced and studied in Flammant (2019).} First, the hyperbolic metric allows us to obtain fine control of the branches' fluctuations in the hyperbolic DSF without using planarity arguments. Then, we couple the hyperbolic RST with the hyperbolic DSF and conclude.
\end{abstract}

\textbf{Key words: } continuum percolation, hyperbolic space, stochastic geometry, random geometric tree, Radial Spanning Tree, Directed Spanning Forest, Poisson point processes.
\bigbreak
\textbf{AMS 2010 Subject Classification:} Primary 60D05, 60K35, 82B21.\\




\section{Introduction}

Geometric random trees are well studied in the literature since they interact with many other fields, such as communication networks, particles systems or population dynamics. Several works have established scaling limits for two-dimensional radial trees \cite{coupier2019directed, coletti2013radial} and translation invariant forests \cite{dsftobw,roy2016random,gangopadhyay}. In addition, random spanning trees appear in the context of first passage percolation \cite{howard}. A complete introduction to geometric random graphs is given in Penrose \cite{penrose2003random}.

\dave{Here we are interested in the} Radial Spanning Tree (RST), \dave{introduced by Baccelli and Bordenave \cite{baccelli} in the Euclidean plane and with motivations from communication networks.} The construction of this tree is the same on the plane $\mathbb{R}^2$ or on the hyperbolic space $\mathbb{H}^{d+1}$ (presented below). The set of vertices is given by a homogeneous Poisson Point Process (PPP) $\mathcal{N}$ of intensity $\lambda$. The RST rooted at the origin $0$ is the graph obtained by connecting each point $z \in \mathcal{N}$ to its parent $A(z)$, defined as the closest point to $z$ among all points $z' \in \mathcal{N} \cup \{0\}$ that are closer to the origin than $z$. This defines a random tree rooted at the origin with a radial structure. Given a path, we will say that the forward direction is towards $0$ and the backward direction is towards infinity. An infinite backward path is defined as a sequence of Poisson points $(z_n)_{n \ge 0} \in \left(\mathcal{N} \cup \{0\} \right)^{\mathbb{N}}$ with $z_0=0$ and $z_n=A(z_{n+1})$ for any $n \ge 0$. \dave{Because a vertex can be the ancestor of no other vertex, all backward paths are not infinite.} 

The topological properties of the bi-dimensional Euclidean RST are well-understood. Baccelli and Bordenave showed that almost surely (a.s.), any infinite backward path admits an asymptotic direction, \tvc{i.e. that a.s. for any infinite path $(z_n)_{n\geq 0}$, the sequence $(z_n/|z_n|)_{n\geq 0}$ converges to a limit in the unit sphere $\mathbb{S}^1$ of $\mathbb{R}^2$. The limit is called the asymptotic direction of the infinite path. Moreover, a.s., every asymptotic direction of $\mathbb{S}^1$ is reached by at least one infinite backward path and there exists a.s. a unique infinite path in any given deterministic asymptotic direction \cite{baccelli}.} These results on the infinite paths are completed by Baccelli, Coupier \& Tran \cite{baccelli2013semi}.

\medbreak

For any integer $d \ge 1$, the hyperbolic space $\mathbb{H}^{d+1}$ is a $(d+1)$-dimensional Riemannian manifold with constant negative curvature, that can be chosen equal to $-1$ without loss of generality. It admits a set of ideal boundary points $\partial \mathbb{H}^{d+1}$, and $\overline{\mathbb{H}^{d+1}}:=\mathbb{H}^{d+1} \cup \partial \mathbb{H}^{d+1}$ denotes the hyperbolic space endowed with its boundary. It is a non-amenable space, i.e. the measure of the boundary of a large subset is not negligible with respect to its volume. \tvc{The hyperbolic space is defined in more details in \cite{cannon,chavel,paupert,ratcliffe}.}

There is a growing interest for the study of random models in a hyperbolic setting. Benjamini and Schramm establish percolation results on regular tilings and Voronoï tessellations in the hyperbolic plane \cite{benjamini}. Mean characteristics of the Poisson-Voronoï tessellation have also been considered in a general Riemannian manifold by Calka et al. \cite{calka2018mean}. This interest is explained by at least two reasons. First, hyperbolic random graphs are well-fitted to model social networks \cite{socialnetworks}. Secondly, strong differences have been noticed for properties of random models depending on whether they are considered in an Euclidean or in a hyperbolic setting. For example, some hyperbolic random graphs admit a non-degenerate regime with infinitely many unbounded components in the hyperbolic space \cite{tykesson, hutchcroft2019percolation}, which is generally not the case in the Euclidean space. In addition, behaviors of non-amenable spaces are well studied in a discrete context \cite{percolationbeyondZd, lyons2017probability, pete2014probability}.

Thus it is natural to consider and study the hyperbolic RST, which we define in the same way as the Euclidean RST. A simulation of the two-dimensional hyperbolic RST is given in Figure \ref{Fig:simulationRST}. In this paper, we extend the results of Baccelli and his coauthors to hyperbolic geometry \textbf{in every dimension}. Here is our main result:

\begin{theorem}
\label{Thm:mainthm}
For any dimension $d \ge 1$ and any intensity $\lambda$, the following happens:
\begin{enumerate}[label=(\roman*)]
    \item almost surely, any infinite backward path $(z_n)_{n \in \mathbb{N}}$ admits an asymptotic direction, i.e. there exists $z_\infty \in \partial \mathbb{H}^{d+1}$ such that $\lim_{n \to \infty} z_n=z_\infty$ (in the sense of the topology of $\overline{\mathbb{H}^{d+1}}$);
    \item almost surely, for any $\dave{z_\infty} \in \partial \mathbb{H}^{d+1}$, there exists an infinite backward path $(z_n)$ with asymptotic direction $\dave{z_\infty}$; 
    \item for any deterministic boundary point $\dave{z_\infty} \in \partial \mathbb{H}^{d+1}$, the path with asymptotic direction $\dave{z_\infty}$ is almost surely unique;
    \item the set of boundary points with at least two infinite backward paths is dense in $\partial \mathbb{H}^{d+1}$;
    \item this set is moreover countable in the bi-dimensional case (i.e. $d=1$).
\end{enumerate}
\end{theorem}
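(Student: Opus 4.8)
The plan is to transfer each assertion to the hyperbolic directed spanning forest (DSF) of Flammant 2019 through a coupling, and there exploit that the exponential growth of the hyperbolic metric controls the transverse fluctuations of the branches, dispensing with planarity. The geometric input behind the coupling is that a hyperbolic sphere $S(0,r)$ of large radius $r$, seen in a cone of bounded aperture around a boundary point $\xi$, is uniformly close to a horosphere; hence inside such a cone and for large $r$ the relation ``$y$ is closer to $0$ than $z$'' agrees up to a controlled error with ``$y$ lies on the prescribed side of the horosphere through $z$''. So for each $\xi\in\partial\mathbb{H}^{d+1}$ I would couple the PPP so that the RST and the DSF directed towards a suitable boundary point coincide inside a fixed cone around $\xi$ and outside a large ball; covering $\partial\mathbb{H}^{d+1}$ by countably many such cones, the tail of every infinite backward path of the RST will coincide with a branch of one of the coupled DSFs, and the DSF estimates transfer.

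For (i), I would first record that along an infinite backward path $(z_n)$ the root-distance $d(0,z_n)$ is strictly increasing and, as any ball contains only finitely many Poisson points, tends to $+\infty$; the renewal-type estimates for the hyperbolic DSF should upgrade this to linear growth in $n$, while the steps $d(z_n,z_{n+1})$ have very light tails. The DSF fluctuation estimate --- the place where the hyperbolic metric does the work, bounding the transverse displacement of a branch sublinearly with no planarity input --- then makes the angular position of $z_n$ (its projection to $\partial\mathbb{H}^{d+1}$ along the geodesic from $0$) Cauchy, since the angular deviation accumulated from step $n$ on is dominated by $\sum_{k\ge n} d(z_k,z_{k+1})\,e^{-c\,d(0,z_k)}$, which is summable and vanishes as $n\to\infty$. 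Hence $(z_n)$ converges in $\overline{\mathbb{H}^{d+1}}$ and is eventually trapped in a cone shrinking to its limit; in particular the map $\phi$ sending an end of the tree to its asymptotic direction is well defined, and the same bound should show $\phi$ continuous on the (compact, totally disconnected) space of ends.

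For (ii), given $I$ I would mark points $p_m\to I$ on a geodesic ray from $0$ to $I$, replace each by the nearest Poisson point $x_m$ (at an a.s.\ controlled distance), reverse the deterministic forward paths $x_m\to\cdots\to0$ and extract diagonally over radii (finitely many Poisson points near $0$); the resulting infinite backward path must converge to $I$ because each finite path stays, by the estimate used in (i), in a cone around the direction of $x_m\to I$ that shrinks with the radius. For (iii), two distinct backward paths from $0$ converging to a \emph{deterministic} $I$ would, by the coupling, be two distinct branches of the DSF directed towards the corresponding fixed boundary point; but that DSF is a.s.\ a tree, i.e.\ any two of its branches coalesce (Flammant 2019), and for backward paths issued from $0$ coalescence means equality since the parent map is deterministic --- a contradiction. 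Determinism is essential here: it is exactly what lets one invoke the DSF coalescence theorem for that specific boundary point rather than a random one.

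For (iv) and (v), let $E\subseteq\partial\mathbb{H}^{d+1}$ be the set of boundary points carrying at least two infinite backward paths and fix a closed ball $B$ on the boundary sphere. On the event $\{E\cap B=\emptyset\}$ every $J\in B$ has a unique ray, giving a section $\psi\colon B\to\{\text{ends}\}$ of $\phi$. If $\psi$ were discontinuous at some $J$, a subsequence $J_k\to J$ would have rays splitting from $\psi(J)$ within a bounded initial depth, hence --- by local finiteness of the tree --- through a fixed vertex and a fixed child $c'$; the subtree at $c'$ has a closed direction-set (the image of a compact end-space under the continuous $\phi$) containing all $J_k$, hence containing $J$, which produces a second ray to $J$ and contradicts $E\cap B=\emptyset$ as $J\in B$. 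So $\psi$ is continuous; but a continuous map from the connected ball $B$ into the totally disconnected space of ends is constant, and $\phi\circ\psi=\mathrm{id}_B$ then forces $B$ to be a single point --- absurd. Hence $E\cap B\neq\emptyset$ for every closed ball, so $E$ is dense, which is (iv). For (v), when $d=1$ the geodesic edges of the RST do not cross, so the tree is planar, its ends carry a cyclic order, and by (i)--(ii) $\phi$ is a cyclic-order-preserving surjection onto $\partial\mathbb{H}^2\cong S^1$; the fibre of each $J\in E$ is then a non-degenerate closed cyclic interval of ends, distinct fibres are disjoint, and among disjoint intervals of the second countable end-space only countably many have non-empty interior while the others are jumps, of which there are also only countably many --- so $E$ is countable, an argument with no counterpart in dimension $d\ge2$. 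The hard part, feeding everything above, will be the quantitative fluctuation estimate for the hyperbolic DSF: an almost sure sublinear-in-$r$ transverse bound together with the linear growth of the root-distance along a branch, strong and uniform enough to survive transport to the RST over all boundary directions at once; once that is in hand, (i)--(iii) are essentially mechanical and (iv)--(v) are soft.
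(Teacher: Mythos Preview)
Your proposal inverts the paper's architecture. In the paper, the quantitative fluctuation estimate (Propositions~\ref{Prop:controlannulus} and~\ref{Prop:globalfluct}) is proved \emph{directly for the RST} by a block/percolation argument on thin annuli, with no reference to the DSF; straightness (Proposition~\ref{Prop:straightness}) and hence (i), (ii), (iv), (v) follow. The DSF coupling is invoked only for (iii). You instead want to import the fluctuation estimate from the DSF through the coupling and use it for everything. The difficulty is that the coupling (Proposition~\ref{Prop:coupling}) only asserts that $A_{\RST(h)}=A_{\DSF}$ on a \emph{fixed compact} $K$ with probability tending to $1$ as $h\to\infty$; it gives no simultaneous statement over the entire tail of an infinite branch, nor over all boundary directions at once. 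Your sentence ``covering $\partial\mathbb{H}^{d+1}$ by countably many such cones, the tail of every infinite backward path of the RST will coincide with a branch of one of the coupled DSFs'' hides exactly the work the paper does directly: to know the tail stays inside a cone where the coupling is valid you already need an a~priori angular control on RST branches, which is circular. The paper's point is precisely that in $\mathbb{H}^{d+1}$ one can get $\MBD$-control for the RST itself without any auxiliary model, because the $e^{-r}$ decay of angles does the job once edge lengths are controlled in annuli.

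There is also a gap in your argument for (iii). You invoke ``the DSF is a.s.\ a tree, i.e.\ any two of its branches coalesce''; that is a statement about \emph{forward} trajectories merging, and it does not by itself preclude two distinct \emph{backward} paths (two ends) converging to the same boundary point. What the paper actually uses is the separate DSF result that for any \emph{deterministic} boundary point there is a.s.\ a unique infinite backward DSF path converging to it (\cite[Theorem~3]{HyperbolicDSF}); the tree property alone is insufficient. Your line ``for backward paths issued from $0$ coalescence means equality since the parent map is deterministic'' conflates the RST root with the DSF, which has no root.

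Your argument for (iv), by contrast, is a genuinely different and rather elegant alternative to the paper's: instead of rotation invariance plus a $0$--$1$ law, you argue that a continuous section from a connected ball $B\subset\partial\mathbb{H}^{d+1}$ into the totally disconnected end space must be constant, contradicting $\phi\circ\psi=\mathrm{id}_B$. This is sound once (i), (ii), local finiteness, and continuity of $\phi$ are in hand, and it has the virtue of being purely deterministic topology at that stage. For (v) both approaches rely on planarity; the paper's is slightly more concrete (an injective map from multiple-path directions into pairs of Poisson points), but your cyclic-order argument on ends is equivalent in spirit.
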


\tvc{Theorem \ref{Thm:mainthm} describes the infinite branches of the hyperbolic RST: every infinite branch admits an asymptotic direction and Point (ii) and (iii) say that for any fixed and deterministic boundary point $I$, there exists a unique infinite path having $I$ as asymptotic direction. But there is a random dense set of boundary points having more than one backwards paths, and in dimension 2 (for $d=1$) this set is countable.}\\

Establishing the results announced in Theorem \ref{Thm:mainthm} in every dimension constitutes the main originality of this paper. For the two reasons explained further, the proofs of Baccelli and Bordenave in the 2D-Euclidean setting \cite{baccelli} cannot be generalized to higher dimensions.

In both contexts $\mathbb{R}^2$ and $\mathbb{H}^{d+1}$, for any $d \ge 1$, the proofs of (i), (ii), (iv) and (v) of Theorem \ref{Thm:mainthm} follow the strategy of Howard and Newman \cite{howard}, which is to show that the tree is \emph{straight}, that is, the \tvc{descendant} subtree of a vertex far from the origin is included in a thin cone. To prove that the 2D-Euclidean RST is straight, Baccelli and Bordenave used a translation invariant model derived from the RST: \tvc{the Directed Spanning Forest (DSF), which constitutes a local approximation of the RST far from the origin \cite{baccelli}.} They exploit the theory of Markov chains to bound from above fluctuations of trajectories in the DSF and then, they deduce the straightness of the RST via planarity. This strategy cannot be generalized to higher dimensions. However, in $\mathbb{H}^{d+1}$, we manage to control the angular deviations of branches in the $\RST$ without resorting to an auxiliary model, that requires planarity as in the Euclidean setting. The hyperbolic metric guarantees that angular deviations decay exponentially fast with the distance to the origin, which is strong enough to show straightness.


In addition, in the Euclidean context, the uniqueness part (point (iii) in Theorem \ref{Thm:mainthm}) is only proved in dimension $2$ since it strongly uses planarity \cite{howard, baccelli}, and the strategy of proof cannot be generalized to higher dimensions. To prove (iii) in $\mathbb{H}^{d+1}$, our strategy consists in exploiting the link existing between the hyperbolic RST and the hyperbolic DSF, defined and studied in Flammant 2019 \cite{dsf}, which is the hyperbolic counterpart of the Euclidean DSF used by Baccelli and Bordenave. Roughly speaking, the hyperbolic DSF can be defined as the limit of the hyperbolic RST when the origin point tends to an ideal boundary point. Similarly to the Euclidean setting, it constitutes a local approximation of the RST far from the origin. The proof of (iii) exploits the coalescence of the hyperbolic DSF (i.e. it is almost surely a tree) \cite[Theorem 1.1]{dsf}, which is a non-trivial fact obtained by exploiting the mass-transport principle, and a local coupling between the two models.\\

After defining the hyperbolic RST and giving its basic properties, we define two quantities that encode angular fluctuations along trajectories, the Cumulative angular Forward Deviations ($\CFD$) and the Maximal Backward Deviations (MBD). We then establish upper bounds of these quantities: first, we upper-bound the Maximal Backward Deviations in a thin annulus of width $\delta>0$ (Proposition \ref{Prop:controlannulus}) and then we deduce a global control of $\MBD$ in the whole space (Proposition \ref{Prop:globalfluct}), that roughly says that angular deviations decay exponentially fast with the distance to the origin. From this upper-bound, we deduce that the RST is straight in the sense of Howard \& Newman (Proposition \ref{Prop:straightness}). The points (i), (ii), (iv) and (v) in Theorem \ref{Thm:mainthm} can be deduced from straightness and the upper-bound of $\MBD$ given by Proposition \ref{Prop:globalfluct}. The point (iii) (the uniqueness part) is done by exploiting a local coupling existing between the RST and the DSF far from the origin.
\medbreak
The rest of paper is organized as follows. In Section \ref{S:firstdef}, we set some reminders of hyperbolic geometry and we define the hyperbolic RST. Then, we give its basic properties and a road-map of the proofs. We also announce the upper bounds of angular deviations (Propositions \ref{Prop:controlannulus} and \ref{Prop:globalfluct}) and the straightness property (Proposition \ref{Prop:straightness}). The proof of Theorem \ref{Thm:mainthm} is done in Section \ref{S:proofmainthm}. Proposition \ref{Prop:controlannulus} is proved in Section \ref{S:proofcontrolannulus} and the proofs of Propositions \ref{Prop:globalfluct} and \ref{Prop:straightness} are done in Section \ref{S:proofglobalfluct}.

\begin{figure}[!h]
    \centering
    \includegraphics[scale=0.7]{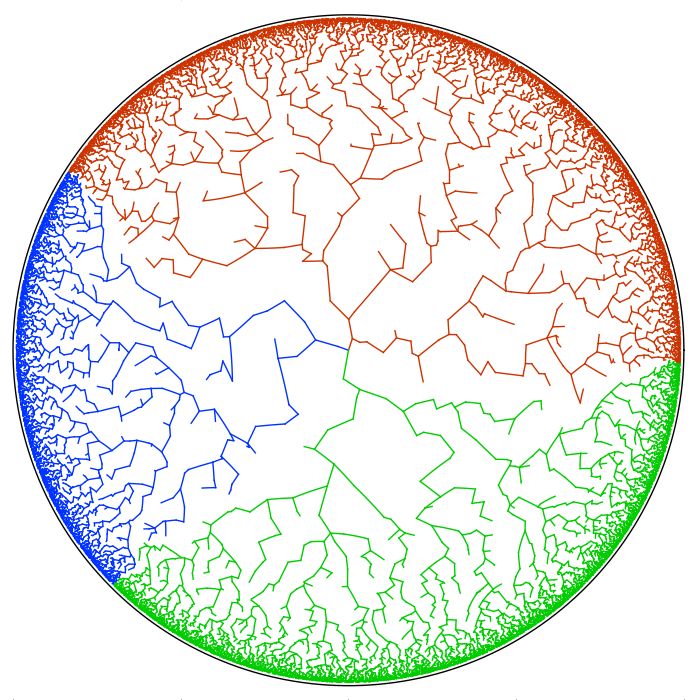}
    \caption{Simulation of the two-dimensional hyperbolic RST, with $\lambda=30$, in the Poincaré disc model. The edges are represented by geodesics. The different connected components of the RST (apart from the root) are represented with different colors.}
    \label{Fig:simulationRST}
\end{figure}

\section{Definitions, notations and basic properties}
\label{S:firstdef}

We denote by $\mathbb{N}$ the set of non-negative integers and by $\mathbb{N}^*$ the set of positive integers. In the rest of the paper, $c$ (resp. $C$) will be some small (resp. large) constant whose value can change from a line to another.

\subsection{The hyperbolic space}

We refer to \cite{ratcliffe} for a complete introduction to hyperbolic geometry. For $d \in \mathbb{N}^*$, the $(d+1)$-dimensional hyperbolic space, denoted by $\mathbb{H}^{d+1}$, is a $(d+1)$-dimensional Riemannian manifold of constant negative curvature $-1$ that can be defined by several isometric models. One of them is the open-ball model consisting in the unit open ball
\begin{eqnarray}\label{def:Poincare-disk}
I=\{(x_1,...,x_{d+1}) \in \mathbb{R}^{d+1},~x_1^2+...+x_{d+1}^2<1\}
\end{eqnarray}
endowed with the following metric:
\begin{eqnarray}\label{def:Poincare-disk-dist}
\dave{ds_I^2:=4\frac{dx_1^2+...+dx_{d+1}^2}{(1-x_1^2-...-x_{d+1}^2)^2}.}
\end{eqnarray}
\tvc{We denote by $d(\cdot,\cdot)$ the hyperbolic distance in $\mathbb{H}^{d+1}$, and by $\|\cdot\|$ the Euclidean norm in $\mathbb{R}^d$, with the convention $\|\infty\|=\infty$. }

The volume measure on $(I,ds_I^2)$, denoted by $\Vol_I$, is given by
\begin{eqnarray}
\dave{d\Vol_I=2^{d+1}\frac{dx_1...dx_{d+1}}{(1-x_1^2-...-x_{d+1}^2)^{d+1}}.}
\end{eqnarray}

\tvc{An important fact about hyperbolic geometry is that $\mathbb{H}^{d+1}$ is homogeneous, isotropic and rotation invariant. It means that the group of isometries of $\mathbb{H}^{d+1}$ acts transitively on the unit tangent bundle of $\mathbb{H}^{d+1}$: given two points $x,y \in \mathbb{H}^{d+1}$ and two unit tangent vectors $u \in T_x\mathbb{H}^{d+1},~v \in T_y\mathbb{H}^{d+1}$, there exists an isometry $g$ of $\mathbb{H}^{d+1}$ such that $g(x)=y$ and that pushes forward $u$ on $v$. The notations $T_x$, $T_y$ and the vocabulary relating to Riemannian geometry are defined in \cite{leeriemannian}. We refer to \cite[Proposition 1.2.1 p.5]{paupert} for a proof. }\\

\tvc{Let $0 \in \mathbb{H}^{d+1}$ be some arbitrary origin point (it can be thought as the center of the ball in the open-ball representation), which will play the role of the root of the RST.} \\
\tvc{The hyperbolic space $\mathbb{H}^{d+1}$ is naturally equipped with a set of points at infinity, denoted by $\partial \mathbb{H}^{d+1}$. In the open-ball model $(I,ds_I^2)$, the set of points at infinity is identified by the boundary unit sphere. Let us denote by $\mathbb{S}^d$ the unit Euclidean sphere in $\mathbb{R}^{d+1}$ and by $\nu$ its $d$-dimensional volume measure. We denote by $\overline{\mathbb{H}^{d+1}}:=\mathbb{H}^{d+1} \cup \partial \mathbb{H}^{d+1}$ the hyperbolic space $\mathbb{H}^{d+1}$ plus the set of points at infinity, with the topology given by the closed ball. A point $z_\infty \in \partial \mathbb{H}^{d+1}$ is called \emph{ideal point} or \emph{point at infinity}.} \\
The metric becomes smaller as we get closer to the boundary unit sphere $\partial I$, and this boundary is at infinite distance from the center $0$.\\
For any subset $E \subset \overline{\mathbb{H}^{d+1}}$, $\overline{E}$ denotes the closure of $E$ in $\overline{\mathbb{H}^{d+1}}$. \\

In the open ball model $(I,ds_I^2)$, the geodesics are of two types: the diameters of $I$ and the arcs that are perpendicular to the boundary unit sphere $\mathbb{S}^d$, \tvc{see Figure \ref{fig:openball}.} We refer to discussion \cite[p.80]{cannon} for a proof. This model is conformal, which means that the hyperbolic angle between two geodesics corresponds to their Euclidean angle in the open-ball representation. \tvc{For $z_1,z_2,z_3 \in \overline{\mathbb{H}}^{d+1}$, $\widehat{z_1 z_2 z_3}$ is the measure of the corresponding (non-oriented) angle. }\\

\begin{figure}[!h]
    \centering
    \includegraphics[scale=0.7]{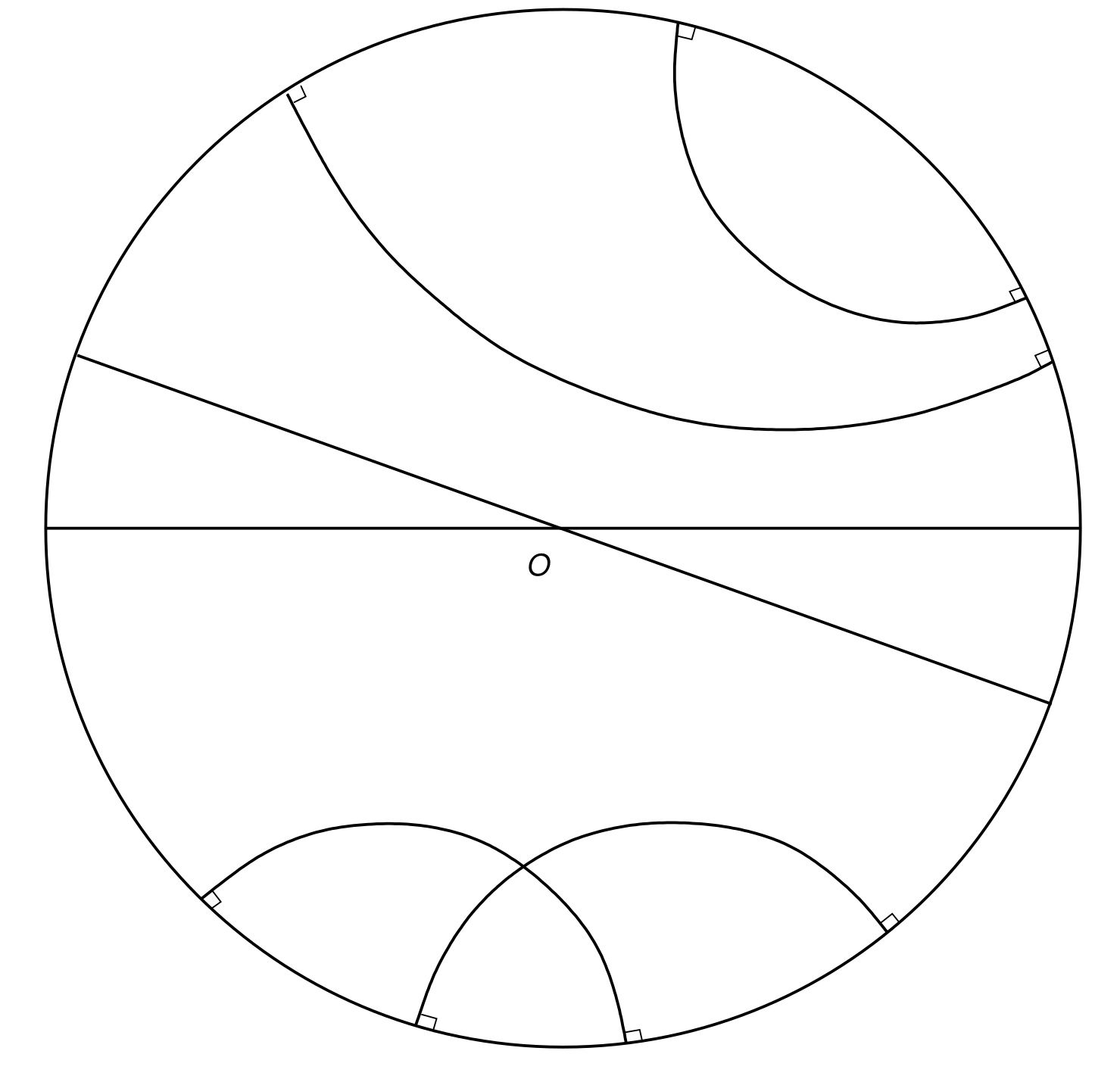}
    \caption{Geodesics in the open ball model}
    \label{fig:openball}
\end{figure}

For $z_1,z_2 \in \overline{\mathbb{H}}^{d+1}$, let us denote by $[z_1,z_2]$ the geodesic between $z_1$ and $z_2$. Moreover, we set the notations:
\[[z_1,z_2[:=[z_1,z_2] \backslash\{z_2\}, \quad ]z_1,z_2]:= [z_1,z_2] \backslash\{z_1\} ,\quad ]z_1,z_2[ \backslash(\{z_2\} \cup \{z_2\}).\]
Let us denote by $[z_1,z_2)$ (resp. $(z_1,z_2]$) the semi-geodesic passing \tvc{through} $z_2$ (resp. $z_1$) and ending at $z_1$ (resp. $z_2$). \\

For $z \in \mathbb{H}^{d+1}$ and $r>0$, we denote by $B(z,r):=\{z' \in H,~d(z,z')<r\}$ (resp. $S(z,r):=\{z' \in H,~d(z,z')=r\}$) the hyperbolic ball (resp. $sphere$) centered at $z$ of radius $r$, and we set $B(r):=B(0,r)$ \tvc{(resp. $S(r):=S(0,r)$).} For $x \in \mathbb{R}^d$ and $r>0$, let us also denote by $B_{\mathbb{R}^d}(x,r):=\{x' \in \mathbb{R}^d,~\|x'-x\|<r\}$ the Euclidean ball centered at $x$ of radius $r$.\\
\tvc{For any point $z \in \overline{\mathbb{H}^{d+1}}$ and $\theta>0$, $\Cone(z,\theta):=\{z' \in \mathbb{H}^{d+1},~\widehat{z 0 z'} \le \theta\}$ is defined as the cone of apex $0$ and aperture $\theta$ (if $\theta \ge \pi$ then $\Cone(z,\theta)$ is the whole space $\mathbb{H}^{d+1}$).}\\

Since the RST is a graph rooted at $0$, a convenient way to represent points in $\mathbb{H}^{d+1}$ is to use polar coordinates. Recall that $0$ is the origin point. For any point $z \in \mathbb{H}^{d+1}$, we denote by $z=(r;u)$ its polar coordinates w.r.t. $0$: $r$ is its distance to $0$ and $u \in UT_{0}\mathbb{H}^{d+1} \simeq \mathbb{S}^d$ is its direction ($UT_{0}\mathbb{H}^{d+1}$ is the unitary tangent space of $0$ in $\mathbb{H}^{d+1}$, \tvc{consisting of tangent vectors of norm 1}). In polar coordinates, the volume measure $\Vol$ is given by
\begin{eqnarray}
\label{E:dvolpolar}
d\Vol(r;u)=\sinh(r)^d~dr~d\nu(u).
\end{eqnarray}
A direct consequence is that the volume of a ball of radius $r$ is given by:
\begin{eqnarray}
\label{E:volball}
\Vol(B(r))=\int_0^r \sinh(t)^d~dt \times \nu(\mathbb{S}^d) \asymp e^{dr} \text{ when } r \to \infty,
\end{eqnarray}
\tvc{where $\asymp$ means that $\Vol(B(r))$ is asymptotically proportional to $e^{dr}$ when $r\rightarrow +\infty$. We refer the reader to \cite[Section III.3 and (III.4.1)]{chavel}.} \\

The hyperbolic law of cosines \dave{\cite[(6.3-5) p.197]{ramsayrichtmyer}} is a well adapted tool to compute distances using polar coordinates. Given $z_1=(r_1;u_1)$, $z_2=(r_2,u_2) \in \mathbb{H}^{d+1}$, the hyperbolic law of cosines gives,
\begin{eqnarray}
\label{E:cosinelaw}
\cosh d(z_1,z_2)=\cosh(r_1)\cosh(r_2)-\langle u_1,u_2 \rangle \sinh(r_1)\sinh(r_2).
\end{eqnarray}

\subsection{The hyperbolic RST}




In the rest of the paper, the dimension $d\geq 1$ and the intensity $\lambda>0$ are fixed. Let $\mathcal{N}$ be a homogeneous PPP of intensity $\lambda$ in $\mathbb{H}^{d+1}$. The definition of the hyperbolic RST is similar to the Euclidean case. The set of vertices is $\mathcal{N} \cup \{0\}$. Each vertex $z \in \mathcal{N}$ \dave{has a unique outgoing edge that connects $z$} to the closest Poisson point among those that are closer to the origin:

\begin{definition}[Radial Spanning Tree in $\mathbb{H}^{d+1}$]
\label{Def:rsthyp}
For any $z=(r;u) \in \mathcal{N}$, the \emph{parent (or ancestor)} of $z$ is defined as
$$
A(z) := \argmin_{z' \in (\mathcal{N}\cup\{0\}) \cap B(r)} d(z',z) ~.
$$
We call \emph{Radial Spanning Tree (RST)} in $\mathbb{H}^{d+1}$ rooted at $0$ the oriented graph $(V,\vec{E})$ where
$$V:=\mathcal{N} \cup \{0\}, \quad \vec{E}:=\{(z,A(z)),~z \in \mathcal{N}\}.$$
\end{definition}

It is possible to assume that $\mathcal{N} \cup \{0\}$ does not contain isosceles triangles, since this event has probability $1$. Thus the ancestor $A(z)$ is well-defined, \tvc{and clearly the RST is a tree rooted at $0$.}

For $z \in \mathcal{N} \cup \{0\}$ and $k \in \mathbb{N}$, \tvc{let us define the $k$-th ancestor of $z$ by $A^{(k)}(z)=A \circ..\circ A(z)$, where composition is taken $k$ times, and their descents after $k$ generations by
$A^{(-k)}(z)=\{z' \in \mathcal{N},~A^{(k)}(z')=z\}$ (in particular $A^{(-1)}(z)$ is the set of daughters of $z$).} For $z \in \mathcal{N}$ and $r \ge 0$, let us define 
\begin{equation}\label{def:B+}
B^+(z,r):=B(z,r) \cap B(0,d(0,z)) \text{ and }B^+(z):=B^+(z,d(z,A(z))).
\end{equation} 
By definition of the parent, $B^+(z) \cap \mathcal{N}=\emptyset$ for all $z \in \mathcal{N}$. \dave{This fact is responsible for many difficulties when studying the RST. Indeed, when restarting from $A(z)=(r';u')$ and constructing the path forward (towards $0$), with probability one $B(r')\cap B^+(z,d(z,A(z)))\not=\emptyset$. This means that the geometric information used to determine $A(z)$ is still involved for later steps of the process, generating statistical dependencies. }\\
\dave{Nevertheless, we can obtain the following basic properties about RST proved in Appendix \ref{S:firstprop}.

\begin{proposition}
\label{Prop:firstprop}
The RST is a tree and it has finite degree a.s. Moreover, in the bi-dimensional case ($d=1$), the representation of the RST obtained by connecting each vertex $z \in \mathcal{N}$ to its parent $A(z)$ by the geodesic $[z,A(z)]$ (instead of $[z,A(z)]^*$) is planar, i.e. their is no two points $z_1,z_2 \in \mathcal{N}$ such that $[z_1,A(z_1)] \cap [z_2,A(z_2)] \neq \emptyset$.
\end{proposition}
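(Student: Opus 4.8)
The plan is to establish the three assertions in order: that the RST is connected, that it is acyclic (hence a tree), that every vertex has finite degree almost surely, and finally the planarity statement in dimension $d=1$.

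\textbf{Tree structure.} Connectedness is immediate from the definition: for every $z \in \mathcal{N}$, the parent $A(z)$ lies strictly closer to $0$ than $z$, so the sequence $z, A(z), A^{(2)}(z), \dots$ is strictly decreasing in the distance to $0$. Since $\mathcal{N}$ is locally finite and every ball $B(r)$ contains only finitely many Poisson points a.s., this sequence must reach $0$ after finitely many steps; thus $z$ is connected to the root. For acyclicity, note that the orientation $z \mapsto A(z)$ strictly decreases $d(0,\cdot)$, so no directed path can close up into a cycle, and since the underlying undirected graph has exactly $|\mathcal{N}|$ edges (one per non-root vertex) on $|\mathcal{N}|+1$ vertices and is connected, it is a tree. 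The well-definedness of $A(z)$ (no ties) uses the a.s. absence of isosceles triangles in $\mathcal{N} \cup \{0\}$, already noted after Definition 2.3.

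\textbf{Finite degree.} Fix $z \in \mathcal{N} \cup \{0\}$. The degree of $z$ is one (for the edge to its parent, if $z \neq 0$) plus the number of daughters, i.e. $|A^{(-1)}(z)|$. If $z' \in A^{(-1)}(z)$ then $A(z') = z$, which forces the open ball $B^+(z') = B(z', d(z',z)) \cap B(0, d(0,z'))$ to be empty of Poisson points. I would show that this emptiness constraint, together with the geometry of $\mathbb{H}^{d+1}$, prevents infinitely many daughters from accumulating: if $z'_1, z'_2, \dots$ were infinitely many daughters, by local finiteness of $\mathcal{N}$ they would have to escape to infinity, and one checks that two daughters $z'_i, z'_j$ far from $0$ but forming a small angle at $0$ would violate the empty-ball condition for one of them (the nearer one would lie in $B^+$ of the farther one, or vice versa). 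A clean way to organize this is: for each daughter $z'$, the region $B^+(z')$ has volume bounded below by a positive constant depending only on $d(0,z)$ (not on $d(0,z')$), because in hyperbolic space the ball $B(z', d(z',z))$ of radius $\geq$ some fixed amount has volume growing, while the obstruction $B(0,d(0,z'))$ cuts off at most a half-space-like portion; distinct daughters have disjoint such regions up to bounded overlap, so only finitely many fit. I expect this to be the main technical point, and it is where one genuinely uses hyperbolicity — the analogous Euclidean statement is also true but the volume bookkeeping is different. Since there are only countably many vertices, a.s. all of them have finite degree.

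\textbf{Planarity in dimension $d=1$.} Here $\mathbb{H}^{2}$ is the hyperbolic plane and edges are drawn as geodesics $[z, A(z)]$. Suppose for contradiction that two distinct edges $[z_1, A(z_1)]$ and $[z_2, A(z_2)]$ cross at an interior point $p$. The key observation is the same one underlying planarity of the Euclidean RST: if $[z_1, A(z_1)]$ and $[z_2, A(z_2)]$ meet, one considers the four endpoints and their distances to $0$; without loss of generality $d(0, z_2) \geq d(0, z_1)$. Using that $A(z_i)$ is the nearest Poisson point to $z_i$ inside $B(0, d(0,z_i))$, and the triangle inequality along the geodesic through the crossing point $p$, one deduces that either $A(z_1)$ is strictly closer to $z_2$ than $A(z_2)$ is, while still lying in $B(0, d(0,z_2))$ — contradicting minimality of $A(z_2)$ — or the symmetric statement with the roles reversed. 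The hyperbolic law of cosines (2.1.7) replaces the Euclidean computation, but the combinatorial structure of the argument is unchanged; the one subtlety is to check that $A(z_1)$ indeed lies in the ball $B(0, d(0,z_2))$, which follows from $d(0, A(z_1)) < d(0, z_1) \leq d(0, z_2)$. This yields the contradiction and establishes that no two geodesic edges cross.

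Overall the proof is a sequence of elementary arguments; the only genuinely delicate step is the finite-degree bound, for which the right framing is the lower volume bound on the disjoint empty regions $B^+(z')$ associated to the daughters of a fixed vertex.
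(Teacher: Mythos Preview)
Your tree-structure and planarity arguments are essentially the paper's: connectedness via the strictly decreasing distance to $0$, acyclicity from the orientation, and the crossing contradiction via the triangle inequality through the intersection point (the paper in fact uses only the triangle inequality, not the law of cosines, and derives $d(P,A(z_2))<d(P,A(z_1))$ and its reverse rather than comparing $d(z_2,A(z_1))$ with $d(z_2,A(z_2))$ directly, but this is the same idea).

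The finite-degree part, however, has a real gap. Your two framings --- angular separation of daughters, and ``disjoint empty regions of volume bounded below, so only finitely many fit'' --- both import Euclidean intuition that fails here. In $\mathbb{R}^d$ the angular-separation argument gives a \emph{deterministic} bound on the number of daughters; in $\mathbb{H}^{d+1}$ it does not, because as a potential daughter $z'$ moves to infinity the angular separation forced by the constraint $d(z'_j,z'_i)\ge d(z'_j,z)$ shrinks exponentially. Likewise, ``only finitely many fit'' has no meaning in an infinite-volume space: there is no contradiction in the Poisson process avoiding infinitely many disjoint regions. What the paper does instead is a first-moment computation via the Campbell formula: for a candidate daughter $z'$ at distance $\rho$ from a fixed $z_0$, the event $A(z')=z_0$ forces $B^+(z',\rho)\cap\mathcal N=\emptyset$, which has probability at most $\exp(-\lambda\,\Vol(B^+(z',\rho)))$; the key estimate is the lower bound $\Vol(B^+(z,\rho))\ge c\,e^{d(\rho\wedge r)/2}$ (proved via the hyperbolic law of cosines in polar coordinates), which makes this probability decay doubly exponentially in $\rho$. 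Integrating against the volume element $\sinh(\rho)^d\,d\rho$ (only single-exponential growth) gives $\mathbb E[\#A^{(-1)}(z_0)]<\infty$, hence a.s.\ finiteness. Your volume-lower-bound intuition is pointing at the right inequality, but the conclusion must be drawn probabilistically, not by a packing argument.
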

}

\dave{In the above proposition, the edges are geodesics. However, }Definition \ref{Def:rsthyp} does not specify the shape of edges. Since the results announced in Theorem \ref{Thm:mainthm} only concern the graph structure of the hyperbolic RST, their veracity does not depend on the geometry of edges. Although it is more natural to represent edges with hyperbolic geodesics, we do another choice in the sequel which will appear more convenient for the proofs. Given $z_1=(r_1;u_1),z_2=(r_2;u_2) \in \mathbb{H}^{d+1}$ such that $0 \notin [z_1,z_2]$, we define a path $[z_1,z_2]^*$, in an isotropic way, verifying the two following conditions:
\begin{enumerate}[label=\roman*)]
    \item the distance to the origin $0$ is \tvc{monotone} along the path $[z_1,z_2]^*$,
    \item the distance to $z_1$ is also \tvc{monotone} along this path.
\end{enumerate}
It will be necessary for the proofs that the shape of edges satisfy conditions (i) and (ii): remark that the geodesic $[z_1,z_2]$ or the Euclidean segment between $z_1$ and $z_2$ do not satisfy condition (i) in general. Since $0 \notin [z_1,z_2]$, $u_1$ and $u_2$ are not antipodal. Thus one can consider the unique geodesic path $\gamma_{u_1,u_2}:[0,1] \to UT_{0}\mathbb{H}^{d+1}$ on the sphere with constant speed connecting $u_1$ to $u_2$. Hence we define the path $[z_1,z_2]^*$ as
\begin{eqnarray}
\label{E:defofstar}
[0,1] &\to& \mathbb{H}^{d+1} \nonumber\\
t &\mapsto& \big((1-t)r_1+tr_2;\gamma_{u_1,u_2}(\phi_{r_1,r_2,\widehat{u_1,u_2}}(t))\big),
\end{eqnarray}
where $\phi_{r_1,r_2,\widehat{u_1,u_2}}:[0,1] \to [0,1]$ is defined as:
\begin{eqnarray*}
\phi_{r_1,r_2,\widehat{u_1,u_2}}(t):=\frac{1}{\widehat{u_1,u_2}}\arccos \left(\frac{(1-t)\sinh(r_1)+t\cos(\widehat{u_1 u_2})\sinh(r_2)}{\sinh((1-t)r_1+tr_2)}\right).
\end{eqnarray*}
This function $\phi_{r_1,r_2,\widehat{u_1,u_2}}$ is built to ensure that the distance to the origin $z_1$ is monotone along the path $[z_1,z_2]^*$. Indeed, by the hyperbolic law of cosines (\ref{E:cosinelaw}),
\begin{eqnarray*}
&&\cosh d\big(z_1,((1-t)r_1+tr_2;\gamma_{u_1,u_2}(\phi(t))\big) \\
&&=\cosh(r_1)\cosh((1-t)r_1+tr_2)-\cos(\phi(t)(\widehat{u_1,u_2}))\sinh(r_1)\sinh((1-t)r_1+tr_2) \\
&&=t\left[ \cosh(r_1)\cosh(r_2)-\cos(\widehat{u_1,u_2})\sinh(r_1)\sinh(r_2)\right] 
\end{eqnarray*}
is monotone in $t$.

We define $[z_1,z_2[^*:=[z_1,z_2]^* \backslash \{z_2\}$ and $]z_1,z_2]^*:=[z_1,z_2]^* \backslash \{z_1\}$. It is possible to assume that $\mathcal{N}$ does not contain two points $z_1,z_2$ such that $0 \in [z_1,z_2]$ since this event has probability $1$. Let us now define the random set $\RST$ by connecting each point $z \in \mathcal{N}$ to $A(z)$ by the path $[z,A(z)]^*$:
$$\RST:=\bigsqcup_{z \in \mathcal{N}} [z,A(z)[^*.$$
It may exist some points $z$ belonging to several paths $[z_1,A(z_1)[^*$, ..., $[z_k,A(z_k)[^*$; in that case, $z$ is counted with multiplicity $k$ in $\RST$.  Formally, we should define the $\RST$ as $\bigcup_{z \in \mathcal{N}} [z,A(z)[^* \times \{z\} \subset \mathbb{H}^{d+1} \times \mathbb{H}^{d+1}.$, i.e. an element $(z,z') \in \RST$ is a couple where $z \in \mathbb{H}^{d+1}$ is a point of the RST and $z'$ is the root of an edge containing $z$. For $(z,z') \in \RST$, we define
\begin{equation}\label{def:zfleche}
z_\downarrow=z', \quad z_\uparrow=A(z').
\end{equation}
For the sake of simplification, we will commit an abuse of notations by considering that $\RST \subset \mathbb{H}^{d+1}$ and identifying an element $(z,z') \in \RST$ to the corresponding point $z \in \mathbb{H}^{d+1}$. Given $z \in \RST$, let $n:=\min \{k \ge 0,~A^{(k)}(z_\uparrow)=0\}$ be the number of steps required to reach the origin from $z_\uparrow$; we define the \emph{trajectory} from $z$ as
\begin{eqnarray*}
\pi(z):=[z,z_\uparrow]^* \cup \bigcup_{k=0}^{n-1} \left[A^{(k)}(z_\uparrow),A^{(k+1)}(z_\uparrow)\right]^*.
\end{eqnarray*}

For $r>0$, we define the \emph{level} $r$ as $$\mathcal{L}_r:=\RST \cap S(r).$$ For $0<r \le r'$ and for $z' \in \mathcal{L}_{r'}$, the \emph{ancestor at level} $r$ of $z'$, denoted by $\mathcal{A}_r^{r'}(z')$ is the intersection point of $\pi(z')$ and $S(r)$. For $0<r \le r'$ and for $z \in \mathcal{L}_{r}$, the \tvc{\emph{set of descendants at level}} $r'$ is defined as 
\[\mathcal{D}_r^{r'}(z):=\{z' \in \mathcal{L}_{r'}, z \in \pi(z')\}\]
(we extend the notation for $z \notin \mathcal{L}_{r}$ by setting $\mathcal{D}_r^{r'}(z):=\emptyset$).
For $z=(r;u) \in \RST$, the \tvc{\emph{descendant subtree}} of $z$ is defined as $\mathcal{D}(z):=\bigcup_{r' \ge r} \mathcal{D}_r^{r'}(z)$, \tvc{see Figure \ref{Fig:ancestor}.} Recall that an \emph{infinite backward path} is a sequence $(z_n)_{n \in \mathbb{N}} \in \left(\mathbb{H}^{d+1}\right)^{\mathbb{N}}$ such that $z_0=0$ and $z_n=A(z_{n+1})$ for all $n \ge 0$.

\begin{figure}[!ht]
    \centering
\begin{tikzpicture}[scale=1.5,decoration={brace}]

\node at (0.78,0) {\includegraphics[scale=0.75]{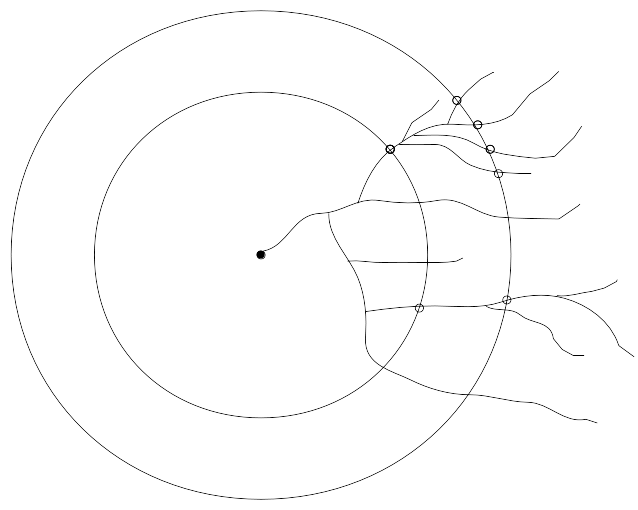}};
\draw (0,0) node[below right] {$0$};


\draw[<->] (0,0)--(-1.55,1.55);
\draw (-0.775,0.775) node[below left] {$r$};
\draw[<->] (0,0)--(-3.3,0);
\draw (-1.65,0) node[below] {$r'$};

\draw (1.7,1.5) node[below right] {$z_2 \in \mathcal{L}_r$};
\draw (3.2,-0.6) node[above right] {$z_1 \in \mathcal{L}_{r'}$};
\draw (2,-0.5) node[below right] {$A_r^{r'}(z_1)$};

\draw[decorate] (2.76,2.2)--(3.38,1.05);
\draw (3.07,1.625) node[right] {$\mathcal{D}_r^{r'}(z_2)$};

\end{tikzpicture}

    \caption{Representation of levels $r$ and $r'$, the ancestor $A_r^{r'}(\cdot)$ and the set of descendants $\mathcal{D}_r^{r'}(\cdot)$. \tvc{Note that ancestors are towards the root $0$ of the RST, which is the ``forward'' direction. Descendants are seen when moving to higher radii, which is the ``backward direction''.}}
    \label{Fig:ancestor}
\end{figure}

\subsection{Sketch of proofs}

In order to prove our main result (Theorem \ref{Thm:mainthm}), the key point is to upper-bound angular deviations of trajectories. We first introduce two quantities, the \emph{Cumulative Forward angular Deviations (CFD)} and \emph{Maximal Backward Deviations (MBD)} to quantify those fluctuations.

\begin{definition}[Cumulative Forward angular Deviations]
Let $0<r \le r'$ and $z' \in S(r')$. If $z' \notin \RST$, we set $\CFD_r^{r'}(z')=0$ by convention and we now suppose that $z' \in \RST$. Let $z:=\mathcal{A}_r^{r'}(z')$. We define the \emph{Cumulative Forward angular Deviations} of $z'$ between levels $r$ and $r'$ as
\begin{eqnarray*}
\CFD_r^{r'}(z'):=
\left\{
\begin{aligned}[l]
&\widehat{z' 0 z} \text{ if } z_\downarrow=z'_\downarrow,\\
&\widehat{z' 0 z'_\uparrow}+\sum_{k=0}^{n-1}\widehat{A^{(k)}(z'_\uparrow) 0 A^{(k+1)}(z'_\uparrow)}+\widehat{z_\downarrow 0 z} \text{ else,}
\end{aligned}
\right.
\end{eqnarray*}
where $n$ is the unique non negative integer such that $A^{(n)}(z'_\uparrow)=z_\downarrow$.
\end{definition}

\begin{definition}[Maximal Backward angular Deviations]
\label{def:defmbd}
Let $0<r \le r'$ and $z \in S(r)$. We define the \emph{Maximal Backward angular Deviations} between levels $r$ and $r'$ as
\begin{eqnarray*}
\MBD_r^{r'}(z):=
\left\{
\begin{aligned}[l]
&0 \text{ if } z \notin \RST,\\
&\sup_{r'' \in [r,r']}\max_{z'' \in \mathcal{D}_{r}^{r''}(z)} \CFD_{r}^{r''}(z'') \text{ if } z \in \RST.
\end{aligned}
\right.
\end{eqnarray*}
We extend the definition to $r'=\infty$ by setting:
\begin{eqnarray*}
\MBD_r^\infty(z):=\lim_{r' \to \infty}\MBD_r^{r'}(z),
\end{eqnarray*}
the limit exists since $r' \mapsto \MBD_r^{r'}(z)$ is non-decreasing.
\end{definition}

\tvc{All forward paths of the RST end at the root $0$, but they can be finite in the backward direction if a vertex is the ancestor of no other vertex. The quantity $\MBD_{r}^{r'}(z)$ takes into account backward paths from $z$ that either end (in the backward direction) before level $r'$ or reach level $r'$.
}

These quantities will be upper-bounded in two steps. First, a percolation argument is used to control angular deviations in any annulus of width $\delta>0$ for some small $\delta>0$ (Proposition \ref{Prop:controlannulus}) and then we deduce a global control of angular deviations (Proposition \ref{Prop:globalfluct}). These two propositions are proved in Sections \ref{S:proofcontrolannulus} and \ref{S:proofglobalfluct}.

\tvc{Let us introduce some further notations. For $r>0$, $z \in \mathbb{H}^{d+1}$ and $\theta>0$, we define
\begin{eqnarray}
\label{E:defBSR}
B_{S(r)}(z,\theta):=\Cone(z,\theta) \cap S(r).
\end{eqnarray}
}
\dave{We will control moments of the maximal backward angular deviations between close radii $r$ and $r+\delta$ when $z$ belongs to such arc $B_{S(r)}(z,\theta)$.}

\begin{proposition}
\label{Prop:controlannulus}
There exists $\delta>0$ such that, for any $p \ge 1$, there exists $C=C(d,p)>0$ such that for any $r>0$, $\theta \ge 0$ and any direction $u \in \mathbb{S}^d$,
\begin{eqnarray}
\mathbb{E}\left[ \sum_{z \in B_{S(r)}(u,\theta) \cap \RST} \left(\MBD_r^{r+\delta}(z)\right)^p \right] \le C \theta^d e^{r(d-p)},
\end{eqnarray}
\end{proposition}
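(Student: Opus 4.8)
The goal is to bound, in $L^p$, the cumulative forward angular deviation of any descendant of a point $z \in S(r)$ at levels up to $r+\delta$, summed over the starting points $z$ in a cone cap $B_{S(r)}(u,\theta)$. My plan is to reduce everything to a single-step analysis together with a percolation/chaining argument across the annulus.

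\medskip

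\textbf{Step 1: One-step angular deviation from the hyperbolic law of cosines.} Fix $z = (s;v) \in \mathcal N$ with $s \in [r, r+\delta]$ and its parent $z_\downarrow$-side edge; write $A(z) = (s';v')$ with $s' \le s$. The first thing I would do is translate the event ``$A(z)$ has direction far from $v$'' into a geometric statement about the empty ball $B^+(z)$. By the hyperbolic law of cosines \eqref{E:cosinelaw}, $\cosh d(z,A(z)) = \cosh s\cosh s' - \langle v,v'\rangle \sinh s \sinh s'$, and since $B^+(z,d(z,A(z))) \cap \mathcal N = \emptyset$, the angular gap $\widehat{v\,0\,v'}$ being large forces a large empty region near the sphere $S(s)$. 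The crucial hyperbolic phenomenon is that a ball of radius $\rho$ tangent to $S(s)$ subtends an \emph{angle} at the origin that is exponentially small in $s$: a displacement of hyperbolic length $\ell$ along $S(s)$ corresponds to angular width $\asymp \ell / \sinh s \asymp \ell e^{-s}$. Hence an angular deviation of size $\alpha$ at level $s$ requires an empty ball $B^+(z)$ whose hyperbolic radius is $\gtrsim \alpha\, \sinh s$ truncated, i.e. the relevant empty volume is at least of order $e^{ds}$ times a function of $\alpha$. Using the PPP property I get, for a single point $z$ at level $s$,
\[
\mathbb P\big(\widehat{v\,0\,v'} \ge \alpha \mid z \in \mathcal N\big) \;\le\; \exp\!\big(-c\,\lambda\, g(\alpha)\, e^{ds}\big)
\]
for a suitable increasing $g$ with $g(\alpha) \asymp \alpha^{d+1}$ for small $\alpha$ (the volume of a spherical-cap-shaped empty region). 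This is the engine: it gives moments $\mathbb E[(\widehat{v\,0\,v'})^p \mid z \in \mathcal N] \le C e^{-p s/(d+1)} \cdots$, in any case \emph{uniformly small and summable}, once combined with the intensity weighting.

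\medskip

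\textbf{Step 2: From one step to $\MBD_r^{r+\delta}$ via a chaining/percolation argument over the annulus.} The $\MBD$ is a supremum over all descendants and all intermediate levels of a \emph{sum} of one-step angular deviations inside the annulus $[r,r+\delta]$. I would discretize the annulus $S(r)\times[r,r+\delta]$ into boxes of small but fixed hyperbolic diameter, declare a box ``good'' if every Poisson point inside it has small one-step angular deviation (say $\le \varepsilon$) and if no edge jumps ``too far'', and use Step 1 to show each box is good with probability $\ge 1 - \exp(-c\lambda e^{dr})$, with independence between boxes that are not neighbours (finite range of dependence, since the empty ball defining $A(z)$ has bounded hyperbolic radius on the good event). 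A standard Peierls-type argument then shows that, with overwhelming probability, there is no path of bad boxes crossing the annulus, and on that event any trajectory segment through the annulus makes at most $O(1/\delta)$ steps (distance-to-origin is monotone along edges and each step has bounded length on the good event), each contributing $\le \varepsilon$, so $\MBD_r^{r+\delta}(z) \le C$ deterministically on the good event, while on the bad event I use the crude a priori bound $\MBD \le \pi$ together with the tiny probability. Choosing $\delta$ small enough makes the range of dependence and the combinatorics work.

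\medskip

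\textbf{Step 3: Taking the expectation of the sum over the cone cap.} Finally I write
\[
\mathbb E\Big[\sum_{z \in B_{S(r)}(u,\theta)\cap\RST}\big(\MBD_r^{r+\delta}(z)\big)^p\Big]
= \mathbb E\Big[\sum_{z\in\mathcal N, z\in B_{S(r)}(u,\theta)\text{-cone}} \cdots\Big],
\]
apply the Mecke/Campbell formula to turn the sum over $\mathcal N$ into an integral $\lambda\int \mathbb E^{!z}[\,\cdots\,]\,d\Vol(z)$ over the cone-cap-times-thin-shell region $\{(s;v): s\in[r,r+\delta],\ \widehat{v\,0\,u}\le\theta\}$, whose volume is $\asymp \theta^d e^{dr}$ by \eqref{E:dvolpolar}. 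For the integrand I split $\MBD^p \le C^p \mathbf 1_{\text{good}} + \pi^p \mathbf 1_{\text{bad}}$: the good part contributes $C^p$ times the volume $\asymp \theta^d e^{dr}$, and the bad part contributes $\pi^p e^{-c\lambda e^{dr}}$ times the volume, which is negligible. Wait --- this gives $\theta^d e^{dr}$, not $\theta^d e^{r(d-p)}$; the extra factor $e^{-pr}$ must come from the size of $\MBD$ itself, not just its probability. So in Step 2 I must keep the \emph{quantitative} bound from Step 1: on the typical event the one-step deviation at level $s\approx r$ is not merely $\le\varepsilon$ but is itself of order $e^{-r}$ (since an empty ball of fixed hyperbolic radius subtends angle $\asymp e^{-r}$, and larger empty balls are exponentially unlikely). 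Carrying that through the chaining gives $\MBD_r^{r+\delta}(z) \le C e^{-r}\,W$ with $W$ a random variable having exponential tails uniformly in $r$, hence $\mathbb E[(\MBD_r^{r+\delta})^p] \le C e^{-pr}$, and multiplying by the volume $\theta^d e^{dr}$ yields exactly $C\theta^d e^{r(d-p)}$.

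\medskip

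\textbf{Main obstacle.} The delicate point is Step 2 combined with the quantitative refinement at the end of Step 3: I need a bound on $\MBD$ that is simultaneously (a) of the correct order $e^{-r}$ typically, (b) controlled in all $L^p$ with $r$-uniform constants, and (c) valid as a supremum over \emph{all} descendants at \emph{all} intermediate levels in the annulus, which requires a genuine percolation (no-bad-crossing) argument rather than a union bound over individual points, since there are of order $e^{dr}$ points in the cap and a naive union bound would be too lossy. Getting the percolation estimate with the right exponential rate $e^{dr}$ in the exponent --- matching the volume of a box near $S(r)$ --- and verifying the finite range of dependence for the parent relation on the good event is where the real work lies. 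The small parameter $\delta$ is chosen precisely to keep the number of steps per crossing bounded and the dependency range smaller than the box spacing.
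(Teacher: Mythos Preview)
Your overall architecture---define good/bad regions, run a percolation argument to show bad clusters are small, and conclude that $\MBD_r^{r+\delta}$ is of order $e^{-r}$ with controlled tails---is exactly the paper's strategy. You also correctly self-correct in Step~3 about where the factor $e^{-pr}$ comes from. Two points deserve attention, though, because they are where the paper does something more careful than your sketch.

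\textbf{(1) Locality of the ``good'' event.} Your good-box condition is ``every Poisson point inside the box has small one-step angular deviation and no edge jumps too far''. As stated this is \emph{not} a local event: whether the parent of $z$ is close depends on $\mathcal N$ in the whole half-ball $B^+(z,d(z,A(z)))$, which could be large precisely on the bad event, so the finite-range claim is circular. The paper resolves this by defining a good point $z\in S(r_0)$ via two manifestly local conditions: (a) $\mathcal N\cap\Psi_1(r_0,z)=\emptyset$ where $\Psi_1$ is a thin outer cone of angular aperture $3Ae^{-r_0}$ and radial width $\delta$, and (b) $\mathcal N\cap\Psi_2(r_0,z)\neq\emptyset$ where $\Psi_2$ is an inner cone of aperture $Ae^{-r_0}$ and radial width $1$. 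Condition~(b) guarantees a candidate parent nearby, and then (a)+(b) together force $\CFD_{r_0}^s\le 3Ae^{-r_0}$ deterministically (Lemma~\ref{Lemma:geometriccontrol}). This is what makes the Peierls argument go through with genuine independence between well-separated inhibited balls.

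\textbf{(2) The sum is over level crossings, not Poisson points.} In Step~3 you write the sum as a sum over $\mathcal N$ restricted to the thin shell $\{s\in[r,r+\delta]\}$ and invoke Campbell. But $B_{S(r)}(u,\theta)\cap\RST$ consists of the points where RST edges cross the sphere $S(r)$; each such crossing is indexed by a Poisson point $z_\downarrow$ with $d(0,z_\downarrow)>r>d(0,A(z_\downarrow))$, and $z_\downarrow$ is \emph{not} confined to the shell of width $\delta$ (long edges exist). The paper avoids Campbell here entirely: it covers $S(r)$ by caps of angular radius $e^{-r}$ (Lemma~\ref{Lemma:covering}), bounds the $p$-th moment of the number of level crossings in each cap by a constant independent of $r$ (Lemma~\ref{Lemma:numberofpoints}, which does need the long-edge tail estimate), and then bounds $\MBD$ on each cap via the deterministic inequality $\MBD_r^{r+\delta}(z)^p\le C(\Rad(z)^p(M(z)+1)^p+e^{-pr_0})$ where $\Rad$ is the angular radius of the bad cluster through $z$ and $M$ is the number of Poisson points in that cluster. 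Cauchy--Schwarz then separates the crossing count from the cluster quantities, and the final sum over caps gives the factor $\theta^d e^{dr}$.

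So: your plan is on the right track and would succeed once you (i) replace the circular good-box definition by a local one in the spirit of $\Psi_1,\Psi_2$, and (ii) replace the Campbell step by a covering argument plus a separate moment bound on the number of level-$r$ crossings per cap.
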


\begin{proposition}
\label{Prop:globalfluct}
For any $p\geq 3d/2$, there exists some constant $C_{\fluct}>0$ such that, for any $0<r_0<\infty$, $A>0$ and any direction $u \in \mathbb{S}^d$,
\begin{eqnarray*}
\mathbb{E}\left[\sum_{z \in B_{S(r_0)} (u,Ae^{-r_0}) \cap \RST} \left( \MBD_{r_0}^\infty(z) \right)^p \right] \le  C_{\fluct}A^de^{-r_0p}.
\end{eqnarray*}
\end{proposition}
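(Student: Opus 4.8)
The plan is to bootstrap the annulus estimate of Proposition \ref{Prop:controlannulus} by chaining it across a sequence of concentric spheres of spacing $\delta$, exploiting the fact that angular deviations are additive along trajectories and that the "aperture" of the cone swept out by the descendents shrinks geometrically as one moves outward. Fix $\delta>0$ as in Proposition \ref{Prop:controlannulus} and, starting from $r_0$, set $r_k:=r_0+k\delta$. The key structural observation is that if $z\in\mathcal{L}_{r_0}$ lies in $B_{S(r_0)}(u,Ae^{-r_0})$, then any descendent $z''\in\mathcal{D}_{r_0}^{r''}(z)$ with $r''\in[r_k,r_{k+1}]$ satisfies $\CFD_{r_0}^{r''}(z'')\le \MBD_{r_0}^{r_k}(z)+\MBD_{r_k}^{r''}(\mathcal{A}_{r_k}^{r''}(z''))$; more importantly, the \emph{angular position} of the level-$r_k$ ancestor of $z''$ is within $\MBD_{r_0}^{r_k}(z)$ of $u$, so once we have controlled $\MBD$ up to level $r_k$, the relevant descendents at level $r_k$ all sit inside $B_{S(r_k)}(u,\Theta_k)$ where $\Theta_k$ is a random radius we control inductively. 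Taking $p$-th powers and using $(a+b)^p\le 2^{p-1}(a^p+b^p)$, one gets a recursion of the schematic form
\begin{eqnarray*}
\mathbb{E}\!\left[\sum_{z''\in\mathcal{D}_{r_0}^{[r_k,r_{k+1}]}(z)}\big(\CFD_{r_0}^{\cdot}(z'')\big)^p\right]
\;\le\; 2^{p-1}\,\mathbb{E}\!\left[\Theta_k^{\,p}\,\#\{\text{descendents}\}\right]
\;+\;2^{p-1}\,\mathbb{E}\!\left[\sum_{\text{level-}r_k\text{ ancestors }w}\big(\MBD_{r_k}^{r_{k+1}}(w)\big)^p\right],
\end{eqnarray*}
and the second term is estimated by Proposition \ref{Prop:controlannulus} applied at radius $r_k$ with aperture $\Theta_k$, giving a bound $\lesssim \mathbb{E}[\Theta_k^{\,d}]\,e^{r_k(d-p)}$.

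Next I would organize this as follows. Define $X_k:=\mathbb{E}\big[\sum_{z\in B_{S(r_0)}(u,Ae^{-r_0})\cap\RST}\big(\MBD_{r_0}^{r_k}(z)\big)^p\big]$ and, more usefully, track the expected $p$-th moment of the cone aperture at level $r_k$, roughly $Y_k:=\mathbb{E}[\sum (\text{aperture at level }r_k)^p]$ where the sum is over the relevant descendents. The base case $k=0$ is immediate: the aperture is $Ae^{-r_0}$ and the number of points in $B_{S(r_0)}(u,Ae^{-r_0})\cap\mathcal N$ has expectation $\asymp \Vol(B_{S(r_0)}(u,Ae^{-r_0}))\asymp A^d$ by the volume formula (\ref{E:volball}) and the scaling of spherical caps at radius $r_0$ (a cap of angular radius $\theta$ at hyperbolic radius $r$ has measure $\asymp \theta^d\sinh(r)^d$). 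For the inductive step, the aperture at level $r_{k+1}$ is bounded by (aperture at level $r_k$) $+$ (the $\MBD$ increment across the annulus $[r_k,r_{k+1}]$), and Proposition \ref{Prop:controlannulus} controls the $p$-th moment of that increment summed over the at-most-$\Theta_k^d\sinh(r_k)^d$-many points at level $r_k$ by $C\,\Theta_k^d e^{r_k(d-p)}$. Choosing $p>d$ makes $e^{r_k(d-p)}=e^{(r_0+k\delta)(d-p)}$ summable in $k$ with ratio $e^{\delta(d-p)}<1$, and iterating yields $Y_k\le C' A^d e^{-r_0 p}$ uniformly in $k$; letting $k\to\infty$ and using monotone convergence ($\MBD_{r_0}^{r'}$ is non-decreasing in $r'$, as noted in Definition \ref{def:defmbd}) gives the claim with $C_{\fluct}$ absorbing all the geometric-series constants.

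The main obstacle I anticipate is making the recursion genuinely \emph{closed}: the aperture $\Theta_k$ at level $r_k$ is itself a random quantity depending on the whole history of the tree up to that level, so one cannot naively plug $\mathbb{E}[\Theta_k^d]$ into an estimate whose hypothesis ("for any direction $u$, any $\theta$") is deterministic in $\theta$. The fix is to condition on the configuration of $\RST\cap B(r_k)$ and apply Proposition \ref{Prop:controlannulus} to the \emph{conditional} law of $\RST$ in the annulus $[r_k,r_{k+1}]$ — but the RST does not have a clean Markov-in-radius property, since $A(z)$ for $z$ at radius slightly beyond $r_k$ can in principle be a point at radius $<r_k$ that is far inside. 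One has to check that Proposition \ref{Prop:controlannulus}, whose proof is a percolation/domination argument over annuli, is robust enough to be applied "on top of" an arbitrary realized configuration below $r_k$ — i.e. that the bound holds for the point process of RST-level-$r_k$ points with their deterministic positions, with the Poisson process only fresh in the annulus. A secondary technical point is bookkeeping the constant $2^{p-1}$ raised through $k$ steps: this is harmless because it multiplies $e^{k\delta(d-p)}$ and we only need $p$ large enough that $2^{p-1}e^{\delta(d-p)}<1$ (equivalently $p$ beyond a threshold depending on $\delta,d$), which is exactly the "for any $p$ large enough" in the statement; one should state this threshold explicitly before running the induction.
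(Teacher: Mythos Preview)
Your high-level strategy---chain the annulus estimate of Proposition~\ref{Prop:controlannulus} across the spheres $r_k=r_0+k\delta$, using additivity of $\CFD$ along trajectories---is exactly the paper's. But the two places where you wave your hands are precisely the two places where the paper has to work, and in both your proposed fix fails.

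\textbf{The random aperture.} You correctly identify that $\Theta_k$ is random and that Proposition~\ref{Prop:controlannulus} needs a deterministic $\theta$; you then propose conditioning on $\RST\cap B(r_k)$ and immediately concede this is unjustified because the RST has no radial Markov property. That concession is correct, and you offer no alternative. The paper's device is different: introduce a free parameter $M$, and work on the event $\{S_n\le MA^de^{-2r_0p}\}$ where $S_n=\sum_z\MBD_{r_0}^{r_0+n\delta}(z)^{2p}$. On that event every descendent at level $r_0+n\delta$ lies in a ball of \emph{deterministic} aperture $Ae^{-r_0}(M^{1/(2p)}+1)$, so Proposition~\ref{Prop:controlannulus} applies unconditionally. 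One then builds an auxiliary increasing sequence $Y_n^M$ with $\mathbb{E}[Y_n^M]\le C(M^{1/(2p)}+1)^d$ uniformly in $n$, such that $S_n\le Y_n^M A^de^{-2r_0p}$ as long as $Y^M:=\lim Y_n^M\le M$; Markov's inequality then gives $\mathbb{P}[S_n>MA^de^{-2r_0p}]\le\mathbb{P}[Y^M>M]=O(M^{d/(2p)-1})$, and integrating in $M$ closes the argument. No conditioning is ever used.

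\textbf{The multiplicative constant.} Your treatment of the factor $2^{p-1}$ is not a secondary technical point; it is fatal as stated. The recursion you sketch is of the form $X_{k+1}\le 2^{p-1}X_k+c_k$ with $c_k\asymp e^{k\delta(d-p)}$, so iterating produces terms $(2^{p-1})^{k}$ which blow up regardless of how fast $c_k$ decays. Your proposed remedy, ``take $p$ large enough that $2^{p-1}e^{\delta(d-p)}<1$'', requires $(p-1)\ln 2<\delta(p-d)$, which for large $p$ forces $\delta>\ln 2$; but $\delta$ is the small width fixed in Proposition~\ref{Prop:controlannulus} (chosen in the proof of Lemma~\ref{Lemma:expdeca} to make a certain probability small), so this inequality is not available. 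The paper avoids this by replacing the crude bound $(a+b)^{2p}\le 2^{2p-1}(a^{2p}+b^{2p})$ with the weighted Jensen inequality $(a+b)^{2p}\le (1-1/n^2)^{1-2p}a^{2p}+n^{4p-2}b^{2p}$ at step $n$. The multiplicative factor on the main term is then $(1-1/n^2)^{1-2p}$, whose infinite product converges, while the polynomial factor $n^{4p-2}$ on the increment is harmlessly absorbed by the exponential decay $e^{n\delta(d-2p)}$.
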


These controls of angular deviations will be first used to show that the RST is \emph{straight} (Proposition \ref{Prop:straightness}). The straightness property is the key to show (i), (ii) and (iv) in Theorem \ref{Thm:mainthm}.

\begin{proposition}[straightness property]
\label{Prop:straightness}
Almost surely, the following happens. For any $\varepsilon>0$, there exists some $R_0>0$, such that, for any radius $r_0 \ge R_0$, for any $z \in \RST$ with $d(0,z) \ge r_0$, the \tvc{descendant} subtree $\mathcal{D}(z)$ is contained in a cone of apex $0$ and aperture $e^{-(1-\varepsilon)r_0}$, i.e. for any $z',z'' \in \mathcal{D}(z)$, $\widehat{z'0z''} \le e^{-(1-\varepsilon)r_0}$.
\end{proposition}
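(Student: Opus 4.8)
The plan is to deduce straightness from the global control of angular deviations given by Proposition \ref{Prop:globalfluct}, via a Borel--Cantelli argument over a countable family of annuli and directions. First I would observe that, for $z \in \RST$ with $d(0,z) = r_0$, the angular spread of the descendents subtree $\mathcal{D}(z)$ is controlled by $2\MBD_{r_0}^\infty(z)$: indeed, if $z',z'' \in \mathcal{D}(z)$, then writing $\widehat{z'0z''} \le \widehat{z'0z} + \widehat{z 0 z''}$ and noting that $\widehat{z'0z}$ is bounded above by the cumulative forward deviation $\CFD_{r_0}^{d(0,z')}(z') \le \MBD_{r_0}^\infty(z)$ (and similarly for $z''$), we get $\widehat{z'0z''} \le 2\MBD_{r_0}^\infty(z)$. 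So it suffices to show that, almost surely, for all $\varepsilon>0$ there is $R_0$ such that $\MBD_{r_0}^\infty(z) \le \tfrac{1}{2}e^{-(1-\varepsilon)r_0}$ for every $z \in \RST$ with $d(0,z) \ge r_0 \ge R_0$.

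The core step is a union bound. Fix $\varepsilon>0$ and a large integer $p$ (large enough for Proposition \ref{Prop:globalfluct} to apply). For integer $n$, cover the sphere $S(n)$ by $O(e^{dn})$ caps of the form $B_{S(n)}(u_i, A e^{-n})$ for a fixed constant $A$ (this many caps of this angular radius suffice because $\Vol(S(n)) \asymp e^{dn}$ and each cap has volume $\asymp e^{dn}(Ae^{-n})^d = A^d$, or more directly because $Ae^{-n}$-caps on $\mathbb{S}^d$ have $\nu$-measure $\asymp (Ae^{-n})^d$ and one needs $\asymp e^{dn}$ of them... wait, that's the wrong count). Let me instead cover $\mathbb{S}^d$ itself by $O((e^{n}/A)^d) = O(e^{dn}A^{-d})$ caps of angular radius $Ae^{-n}$; then by Proposition \ref{Prop:globalfluct} applied at $r_0 = n$ to each cap center $u_i$,
\begin{eqnarray*}
\mathbb{P}\left( \exists\, z \in \mathcal{L}_n,\ \MBD_n^\infty(z) > \tfrac12 e^{-(1-\varepsilon)n} \right)
&\le& \sum_i \mathbb{P}\left( \exists\, z \in B_{S(n)}(u_i, Ae^{-n}) \cap \RST,\ \MBD_n^\infty(z) > \tfrac12 e^{-(1-\varepsilon)n} \right) \\
&\le& \sum_i \left(\tfrac12 e^{-(1-\varepsilon)n}\right)^{-p} \mathbb{E}\left[ \sum_{z \in B_{S(n)}(u_i, Ae^{-n}) \cap \RST} \left(\MBD_n^\infty(z)\right)^p \right] \\
&\le& C\, e^{dn} A^{-d} \cdot 2^p e^{(1-\varepsilon)np} \cdot C_{\fluct} A^d e^{-np}
\;=\; C'\, e^{dn} e^{-\varepsilon n p}.
\end{eqnarray*}
Choosing $p > d/\varepsilon$ makes the right-hand side summable in $n$, so Borel--Cantelli gives that almost surely, for all $n$ large enough, every $z \in \mathcal{L}_n$ satisfies $\MBD_n^\infty(z) \le \tfrac12 e^{-(1-\varepsilon)n}$.

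Finally I would pass from integer levels to all radii. If $z \in \RST$ with $d(0,z) = r_0 \in [n, n+1]$, then $z$ has a (unique) ancestor $w := \mathcal{A}_n^{r_0}(z) \in \mathcal{L}_n$, and $\mathcal{D}(z) \subset \mathcal{D}(w)$, so the angular spread of $\mathcal{D}(z)$ is at most $2\MBD_n^\infty(w) \le e^{-(1-\varepsilon)n} \le e^{-(1-\varepsilon)(r_0-1)} = e^{(1-\varepsilon)}e^{-(1-\varepsilon)r_0}$; absorbing the constant $e^{1-\varepsilon}$ by replacing $\varepsilon$ with a slightly larger $\varepsilon'$ (or equivalently taking $R_0$ large) gives the claimed bound $\widehat{z'0z''} \le e^{-(1-\varepsilon)r_0}$ for all $z',z'' \in \mathcal{D}(z)$ and $r_0 \ge R_0$. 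Running this for $\varepsilon = 1/k$, $k \in \mathbb{N}^*$, and intersecting the full-measure events yields the statement for all $\varepsilon>0$ simultaneously.

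The main obstacle, as usual in such arguments, is the bookkeeping at the boundary between the discrete levels and the continuum of radii — making sure the monotonicity $\MBD_n^{r'}$ in $r'$ and the inclusion $\mathcal{D}(z) \subset \mathcal{D}(\mathcal{A}_n^{r_0}(z))$ are applied correctly so that a single bound at integer level $n$ controls everything at radii in $[n, n+1]$ — together with checking that the covering number of $\mathbb{S}^d$ by $Ae^{-n}$-caps is genuinely $O(e^{dn})$ (up to the $A$-dependence), which is exactly the scale at which Proposition \ref{Prop:globalfluct} is stated. Everything else is a routine Markov inequality plus Borel--Cantelli.
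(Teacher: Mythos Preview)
Your proof is correct and follows the same overall strategy as the paper: bound the angular spread of $\mathcal{D}(z)$ by $2\MBD_{r_0}^\infty(z)$, control the latter via the $p$-th moment estimate of Proposition~\ref{Prop:globalfluct} together with Markov's inequality and Borel--Cantelli along integer levels, then interpolate to arbitrary radii. The one place where the paper is more economical is your covering step: rather than tiling $S(n)$ by $O(e^{dn})$ caps of angular radius $Ae^{-n}$ and doing a union bound, the paper simply applies Proposition~\ref{Prop:globalfluct} once with $A=\pi e^{n}$, so that the single cap $B_{S(n)}(u,\pi)$ is the whole sphere $S(n)$, yielding directly $\mathbb{E}\big[\sum_{z\in\mathcal{L}_n}(\MBD_n^\infty(z))^p\big]\le C e^{n(d-p)}$ and hence $\sum_n \mathbb{E}\big[(e^{(1-\varepsilon)n}\max_{z\in\mathcal{L}_n}\MBD_n^\infty(z))^p\big]<\infty$. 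Both routes produce the same summability condition $p>d/\varepsilon$, and your passage from integer levels to all radii via $\mathcal{D}(z)\subset\mathcal{D}(\mathcal{A}_n^{r_0}(z))$ is exactly the paper's monotonicity of $r_0\mapsto\max_{z\in\mathcal{L}_{r_0}}\MBD_{r_0}^\infty(z)$ in slightly different clothing.
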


The proof of (iii) in Theorem \ref{Thm:mainthm} exploits the controls of angular  deviations (Proposition \ref{Prop:globalfluct}) and the link existing between the RST and the hyperbolic Directed Spanning Forest introduced: the DSF approximates locally the RST far from the origin. The uniqueness of the infinite backward path with some given deterministic asymptotic direction has been shown for the DSF \cite{HyperbolicDSF}, and the local coupling existing between the two models permits to show that this property remains true for the RST.

\section{Proof of Theorem \ref{Thm:mainthm}}
\label{S:proofmainthm}

Here we assume that Propositions \ref{Prop:globalfluct} and \ref{Prop:straightness} are proved and we show that it implies Theorem \ref{Thm:mainthm}.

\subsection{The existence part: proof of (i),(ii),(iv) and (v)}
\label{S:existencepart}

\paragraph{Proof of (i) and (ii)} We first show that any infinite backward path admits an asymptotic direction and that any ideal boundary point is the asymptotic direction of an infinite backward path. The strategy consists in exploiting the straightness property (Proposition \ref{Prop:straightness}):

Let $(z_n)_{n\geq 0}$ be an infinite backward path, we prove that $(z_n)_{n\geq 0}$ admits an asymptotic direction. For $n \ge 0$, let us decompose $z_n$ in polar coordinates: $z_n=(r_n;u_n)$. Proposition \ref{Prop:straightness} immediately implies that the sequence $(u_n)_{n \ge 0}$ is a Cauchy sequence in $UT_{0}\mathbb{H}^{d+1} \simeq \mathbb{S}^{d}$.
\tvc{To see this, let $\varepsilon>0$. Since the path $(z_n)$ is infinite, the sequence $(r_n)$ converges to infinity. Let $n_\varepsilon$ be such that $e^{-r_{n_\varepsilon}/2}\leq \varepsilon$. By the straightness property, there exists $N_0$ such that for $n_0 \geq N_0$, the path $(z_n)_{n\geq n_0}$ remains inside $\Cone(z_{n_0},e^{-r_{n_0}/2})\cap B(r_{n_0})^c$. Thus, for $n_0\geq N_0\vee n_\varepsilon$, the path $(z_n)_{n\geq n_0}$ remains in a cone of aperture $\varepsilon$, proving that 
$(u_n)_{n\geq 0}$ is a Cauchy sequence.}
Thus the sequence $(u_n)_{n \ge 0}$ converges, and so $(z_n)_{n\geq 0}$ converges to some boundary point $z_\infty \in \partial\mathbb{H}^{d+1}$, \tvc{which proves (i).}\\

Let $\Psi=\{\lim_{n \to \infty} z_n,~(z_n) \text{ is an infinite backward path}\} \subset \partial \mathbb{H}^{d+1}$ be the set of asymptotic directions reached by at least one infinite backward path. In order to prove that $\Psi=\partial \mathbb{H}^{d+1}$, we proceed in two steps: we first show that $\Psi$ is dense in $\partial\mathbb{H}^{d+1}$, then we show that $\Psi$ is closed in $\partial\mathbb{H}^{d+1}$.\\
Since the $\RST$ is an infinite tree with finite degree a.s. (Proposition \ref{Prop:firstprop}), there exists an infinite backward path from $0$ and the corresponding infinite backward path converges to an ideal boundary point by the previous paragraph, thus $\Psi \neq \emptyset$ almost surely.

We denote by $\Stab(0)$ the set of isometries that fix $0$, in particular it contains rotations centred at $0$. Let $B$ be an open subset of $\partial \mathbb{H}^{d+1}$. Since $\partial \mathbb{H}^{d+1} \simeq \mathbb{S}^d$ is compact, there exists finitely many isometries $\gamma_1,...,\gamma_k \in \Stab(0)$ such that $\bigcup_{i=1,...,k} \gamma_iB=\partial \mathbb{H}^{d+1}$. The random set $\RST$ is invariant in distribution by $\Stab(0)$, so the events $\{\Psi \cap \gamma_i B \neq \emptyset\}$ all have the same probability. Since $\Psi \neq \emptyset$ almost surely, $\mathbb{P}\left[\bigcup_{i=1,...,k} \{\Psi \cap \gamma_i B \neq \emptyset\} \right]=1$ therefore $\mathbb{P}(\Psi \cap B \neq \emptyset)>0$. In addition, for any neighbourhood $\Phi \subset \overline{\mathbb{H}^{d+1}}$ of $B$, the event $\{\Psi \cap B \neq \emptyset\}$ is entirely determined by $\mathcal{N} \cap \Phi$, therefore it has probability $0$ or $1$, \tvc{by Kolmogorov's 0-1 law.} Thus $\Psi \cap B \neq \emptyset$ almost surely, \tvc{since we already showed that $\mathbb{P}(\Psi \cap B \neq \emptyset)>0$.} Since the topology on $\partial \mathbb{H}^{d+1}$ admits a countable basis, $\Psi$ is almost surely dense in $\partial\mathbb{H}^{d+1}$.

It remains to show that $\Psi$ is a closed subset of $\partial\mathbb{H}^{d+1}$. Let $I \in \overline{\Psi}$ (recall that $\overline{\Psi}$ is the closure of $\Psi$ in $\overline{\mathbb{H}^{d+1}}$). We construct by induction an infinite backward path $(z_n)_{n \ge 0} \in \mathcal{N}^{\mathbb{N}^*}$ such that, for any $i \in \mathbb{N}$, $I \in \overline{\mathcal{D}(z_i)}$. Suppose $0,...,z_{i-1}$ already defined such that $z_j=A(z_{j+1})$ for $0 \le j \le i-2$ and $I \in \overline{\mathcal D(z_{i-1})}$. Since the vertex $z_{i-1}$ has finitely many daughters, there exists some $z \in A^{(-1)}(z_{i-1})$ such that $I \in \overline{\mathcal{D}(z)}$. We define $z_i$ as such a $z$.

We now use straightness to show that the infinite backward path $(z_n)$ constructed above converges to $I$ (and thus $I \in \Psi$). This infinite backward path converges to some $I' \in \partial\mathbb{H}^{d+1}$ by (i). Let $\varepsilon>0$, by Proposition \ref{Prop:straightness} there exists some $i \ge 0$ such that $\mathcal{D}(z_i)$ (and thus $\overline{\mathcal{D}(z_i)}$) is contained in a cone of apex $0$ and aperture at most $\varepsilon$. Since both $I$ and $I'$ belong to $\overline{\mathcal{D}(z_i)}$, $\widehat{I 0 I'} \le \varepsilon$. Thus $I=I'$, \tvc{which achieves the proof of (ii).}

\paragraph{Proof of (iv) and (v)} Let us denote by $\Psi' \subset \partial \mathbb{H}^{d+1}$ the set of asymptotic directions with two infinite backward paths. To show (iv), we first show that, a.s. $\Psi'\neq \emptyset$. For $z \in \RST$, let us define $\Psi_z \subset \partial \mathbb{H}^{d+1}$ as the set of asymptotic directions of infinite backward paths from $z$ (empty or not). By the same argument as in Step 2, $\Psi_z$ is a closed subset of $\partial \mathbb{H}^{d+1}$. By (ii), a.s., there exists at least two infinite backward paths, so there exists a.s. some level $r_0>0$ with two points connected to infinity. Thus $\{\Psi_z,~z \in \mathcal{L}(r_0)\}$ is a covering of $\partial \mathbb{H}^{d+1}$ by closed subsets, where at least two of them are nonempty. Since $\partial \mathbb{H}^{d+1}$ is connected, it implies that there exists $z_1,z_2 \in \mathcal{L}_{r_0}$ such that $\Psi_{z_1} \cap \Psi_{z_2} \neq \emptyset$. Thus $\Psi' \neq \emptyset$ a.s.

We use the same argument as in Step 2 to deduce that $\Psi'$ is dense. Let $B$ be an open subset of $\partial \mathbb{H}^{d+1}$. Since $\partial \mathbb{H}^{d+1} \simeq \mathbb{S}^d$ is compact, there exist finitely many isometries $\gamma_1,...,\gamma_k \in \Stab(0)$ such that $\bigcup_{i=1,...,k} \gamma_iB=\partial \mathbb{H}^{d+1}$. The random set $\RST$ is invariant in distribution by $\Stab(0)$, so the events $ \Gamma_i:=\{\Psi' \cap \gamma_iB \neq \emptyset\}$
all have the same probability. Since $\Psi \neq \emptyset$ almost surely, $\mathbb{P}\left[\bigcup_{i=1,...,k} \Gamma_i \right]=1$ and therefore $\mathbb{P}(\Gamma_i)>0$. In addition, for any neighbourhood $\Phi \subset \overline{\mathbb{H}^{d+1}}$ of $B$, the event $\Gamma_i$ is entirely determined by $\mathcal{N} \cap \Phi$, which implies that it has probability $0$ or $1$. Thus $\Psi' \cap B \neq \emptyset$ almost surely. Since the topology on $\partial \mathbb{H}^{d+1}$ admits a countable basis, $\Psi'$ is almost surely dense in $\partial\mathbb{H}^{d+1}$.

The proof of (v) is done by exploiting the planarity in the bi-dimensional case (Proposition \ref{Prop:firstprop}). Let us associate to any $z_\infty \in \Psi'$ a couple of vertices $P(z_\infty)=(z_1,z_2) \in \mathcal{N}^2$ with $z_1 \neq z_2$ such that $z_\infty \in \Psi_{z_1} \cap \Psi_{z_2}$. By planarity, such an application $P$ must be injective. Indeed, if $z_\infty \neq z'_\infty$ are such that $P(z_\infty)=P(z'_\infty)=(z_1,z_2)$, then
there exists four distinct backward infinite paths joining $z_\infty$ to $z_1$, $z_\infty$ to $z_2$, $z'_\infty$ to $z_1$ and $z'_\infty$ to $z_2$. This implies that two paths among them intersect each other, even if the representation of edges are replaced by geodesics, which contradicts planarity. Therefore $\Psi'$ is a.s. countable in the case $d=1$.

\subsection{The uniqueness part: proof of (iii)}

The strategy is to exploit the link between the hyperbolic RST and the hyperbolic Directed Spanning Forest (DSF). 
\dave{First, recall that the space $\mathbb{H}^{d+1}$ can be described with several isometric models. We have already seen the open-ball model $(I,ds_I^2)$ and we introduce here another one, called the upper half-space model, $(H, ds^2)$, where:
\begin{eqnarray*}
H=\{z=(x_1,\dots,x_d,y) \in \mathbb{R}^{d+1},~y>0\}, \quad ds^2=\frac{dx_1^2+\dots+dx_d^2+dy^2}{y^2}.
\end{eqnarray*}
In the following, we will identify the point $z=(x_1,\dots,x_{d},y) \in H$ with the couple $(x,y) \in \mathbb{R}^d \times \mathbb{R}_+^*$ with
$x:=(x_1,\dots ,x_d)$ and $y:=x_{d+1}$.
The coordinate $x$ is referred as the \emph{abscissa} and $y$ as the \emph{ordinate}. The latter coordinate $y$ plays a special role and will be the direction in which the DSF is built. 
}

Let us remind that, in the half-space representation, the boundary set $\partial \mathbb{H}^{d+1}$ is identified as the boundary hyper-plane $\mathbb{R}^d \times \{0\}$, plus an additional point at infinity denoted by $\infty$, obtained by compactifying the closed half-space $\mathbb{R}^d \times \mathbb{R}_+$. Let us define $I_\infty$ as the boundary point represented by $(0,0)$ in $H$.

\tvc{We now recall the definition of the hyperbolic DSF with direction $\infty$ (the point defined in the preceding paragraph) in the half-space model $(H,ds^2)$. The hyperbolic DSF with direction $\infty$ is a graph with set vertices the points of our PPP $\mathcal{N}$ and whose nodes have out-degree 1. We connect each point $z \in \mathcal{N}$ to the closest Poisson point in the direction of $\infty$: this point, denoted as $A_{\DSF}(z)$, is the parent of $z=(x,y)\in \mathcal{N}$ and is defined as:
\begin{equation}
\label{def:ADSF}
A_{\DSF}(z):=\argmin_{z'=(x',y') \in \mathcal{N},~y'>y} d(z',z).
\end{equation}
where $d(.,.)$ is the hyperbolic distance. Underlying this notion is the notion of horodistance (see \cite{HyperbolicDSF}), but in the case where the direction is $\infty$, the definition boils down to \eqref{def:ADSF}.
 }

\tvc{
For any $h \ge 0$, let us now define $\RST(h)$ as the Radial Spanning Tree of $\mathcal{N}$ with origin $O(h):=(0,e^h)$ similarly as in Definition \ref{Def:rsthyp}. It is the tree rooted at $O(h)$ with vertex set $\mathcal{N}$ and whose nodes have out-degree 1. Each point $z\in \mathcal{N}$ is connected to its parent 
\begin{equation}
\label{def:ARST}
A_{\RST}(z)=\argmin_{z'\in (\mathcal{N}\cup\{O(h)\})\cap B(O(h),r)}d(z',z),\qquad \mbox{ with }\quad r=d(O(h),z).
\end{equation}
}
\tvc{To avoid confusion, the parent of $z$ in the hyperbolic RST is now denoted by $A_{\RST}(z)$ instead of $A(z)$ (see Definition \ref{Def:rsthyp}) when necessary.
}

We will also consider, for any given $h \ge 0$, the direction toward $I_\infty$ defined as $u:=(0,...,-1) \in UT_{O(h)}\mathbb{H}^{d+1} \simeq \mathbb{S}^d$. The proof is based on the two following propositions. The next one asserts that the $\RST(h)$ and the $\DSF$ coincide in a given compact set when $h$ is large enough.

\begin{proposition}[Coupling between $\RST$ and $\DSF$]
\label{Prop:coupling}
Let $K \subset \mathbb H^{d+1}$ be some compact set. Then
\begin{eqnarray*}
\lim_{h \to \infty} \mathbb{P}[\forall z \in \mathcal{N} \cap K,~A_{\RST(h)}(z)=A_{\DSF}(z)]=1.
\end{eqnarray*}
\end{proposition}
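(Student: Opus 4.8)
The plan is to show that, for $h$ large, the parent assignments of the two models agree on all of $K$ by controlling, for a single vertex $z \in \mathcal N \cap K$, the region that "decides" its parent. Recall that $A_{\DSF}(z)$ is the closest point of $\mathcal N$ in the half-space above $z$ (the one containing the direction $\infty$, i.e. larger ordinate), whereas $A_{\RST(h)}(z)$ is the closest point of $\mathcal N \cup \{O(h)\}$ in the ball $B(O(h), d(O(h),z))$ centered at the root $O(h) = (0,e^h)$. The key geometric observation is that, as $h \to \infty$, the root $O(h)$ tends to the ideal point $I_\infty$, and on any fixed compact set $K$ the sphere $S(O(h), d(O(h),z))$ through a point $z$ converges (locally uniformly, in the $C^1$ sense) to the horosphere through $z$ centered at $I_\infty$ — which is exactly the boundary of the half-space used to define $A_{\DSF}(z)$. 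Hence the two "candidate regions" for the parent, restricted to a slightly larger compact neighbourhood, differ only in a thin sliver whose volume goes to $0$.

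The steps, in order. \emph{Step 1.} Fix $R$ large enough that $K \subset B_{\mathbb H^{d+1}}(I_\infty\text{-horoball base}, \cdot)$ — more concretely, enlarge $K$ to a compact $K'$ with $K \subset \mathring{K'}$, and let $R_0$ bound the distance from any $z \in K$ to its $\DSF$-parent and to its $\RST(h)$-parent, uniformly for $h$ large; this uses that $\mathcal N$ restricted to a fixed neighbourhood of $K'$ is a.s. locally finite and nonempty in every half-space direction, so both parents of any $z \in \mathcal N \cap K$ lie in $K'$ with probability tending to $1$ (a routine Poisson tail estimate on $\mathcal N(K' ) < \infty$ and $\mathcal N$ hitting the relevant half-space). \emph{Step 2.} On the event of Step 1, for a given $z$ the two parents can differ only if some point of $\mathcal N$ lies in the symmetric difference $\Delta_h(z) := \big(B(O(h),d(O(h),z)) \triangle H^+_\infty(z)\big) \cap K'$, where $H^+_\infty(z)$ is the horoball through $z$ toward $I_\infty$; I would also note $O(h) \notin B(O(h), d(O(h),z))$ is impossible, but $O(h)$ itself could be picked as $\RST$-parent only if it beats all Poisson points, an event of the same type. \emph{Step 3.} Bound $\Vol(\Delta_h(z))$: parametrise the hyperbolic sphere of radius $\rho = d(O(h),z)$ centered at $O(h)$ and the horosphere through $z$; both pass through $z$ with the same tangent plane in the limit, and an explicit computation (or a compactness/continuity argument using that spheres of radius $\rho \to \infty$ centered at points tending to $I_\infty$ converge $C^1$ to the horosphere) gives $\sup_{z \in K'} \Vol(\Delta_h(z)) \to 0$ as $h \to \infty$. \emph{Step 4.} Conclude: conditionally on $\mathcal N(K') = m$ (finite), the probability that any of the finitely many points $z \in \mathcal N \cap K$ has $\Delta_h(z) \cap \mathcal N \neq \emptyset$ is at most $m \cdot \mathbb P[\mathcal N(\Delta_h(z)) \ge 1 \mid \cdots]$, and by the restriction property of the Poisson process this is controlled by $m(1 - e^{-\lambda \sup_z \Vol(\Delta_h(z))}) \to 0$. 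Summing over the distribution of $m$ (dominated convergence, using $\mathbb E[\mathcal N(K')] < \infty$) and combining with Step 1 finishes the proof.

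The main obstacle is \emph{Step 3}: making the convergence $\Vol(\Delta_h(z)) \to 0$ quantitative and, crucially, \emph{uniform} over $z \in K'$. The cleanest route is to work in the half-space model, where $O(h) = (0,e^h)$ and the hyperbolic sphere $S(O(h),\rho)$ through $z=(x,y)$ is a Euclidean sphere; one writes down its Euclidean center and radius explicitly via the law of cosines (\ref{E:cosinelaw}), lets $h \to \infty$ with $z$ ranging over the fixed compact $K'$, and checks that it converges to the Euclidean horosphere $\{(x',y') : |x'|^2 + (y')^2 = \text{const}\}$... more precisely to the plane/sphere that is the $I_\infty$-horosphere through $z$, with an error $O(e^{-h})$ in Hausdorff distance on $K'$. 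Then $\Delta_h(z) \cap K'$ is contained in an $O(e^{-h})$-neighbourhood of a fixed compact hypersurface, whose volume is $O(e^{-h})$ uniformly. One should be slightly careful that $K'$ stays bounded away from the boundary hyperplane $\mathbb R^d \times \{0\}$ so that the hyperbolic and Euclidean metrics are comparable there with constants depending only on $K'$; this is automatic since $K'$ is a compact subset of the open half-space. The remaining steps are standard Poisson-process bookkeeping.
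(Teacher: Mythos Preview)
Your approach is essentially the paper's: both proofs rest on the observation that for each $z$ the two parents agree unless $\mathcal N$ hits the symmetric difference of the two ``decision regions'', and this symmetric difference (once truncated to a bounded neighbourhood of $z$) has hyperbolic volume tending to $0$ as $h\to\infty$. The paper collapses your Steps~1, 2 and~4 into a single line via Campbell's formula: it bounds directly
\[
\mathbb E\big[\#\{z\in\mathcal N\cap K:\ \mathcal N\cap \Delta_h(z)\neq\emptyset\}\big]
=\lambda\int_K\big(1-e^{-\lambda\,\Vol(\Delta_h(z))}\big)\,d\Vol(z)\longrightarrow 0
\]
by dominated convergence (the integrand is bounded by $1$ on the compact $K$ and tends to $0$ pointwise). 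This sidesteps your enlargement to $K'$, the conditioning on $\#(\mathcal N\cap K')$, and the uniformity-in-$z$ concern you flag in Step~3: pointwise convergence of $\Vol(\Delta_h(z))$ is all that is needed once you integrate.

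One correction to your geometry: in the half-space model $O(h)=(0,e^h)$ tends to the boundary point $\infty$ (the ``top'' of $H$) as $h\to\infty$, not to $I_\infty=(0,0)$. The DSF in the paper is directed toward $\infty$, so the relevant horospheres are the horizontal hyperplanes $\{y'=y\}$ and $H^+_\infty(z)$ should be the half-space $\mathbb R^d\times(y,\infty)$, not a Euclidean sphere tangent at the origin as you wrote. With that fix your Step~3 is correct: the hyperbolic sphere through $z=(x,y)$ centred at $O(h)$ is a Euclidean sphere whose bottom sits at height $y+O(e^{-h})$ and whose Euclidean radius is of order $e^h$, so on any fixed compact it is $O(e^{-h})$-close to the plane $\{y'=y\}$, uniformly in $z$.
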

For $A,a,h \ge 0$, let us define:
\begin{eqnarray*}
&&\Vois(A,h):=\Cone_{O(h)}(I_\infty,Ae^{-h}) \backslash B(O(h),h), \\
&&\Vois'(A,a,h):=(B(O(h),h+a) \cap \Cone_{O(h)}(I_\infty,Ae^{-h})) \backslash B(O(h),h),\\
&&\Vois''(A,a,h):=\Cone_{O(h)}(I_\infty,Ae^{-h-a}) \backslash B(O(h),h+a),
\end{eqnarray*}
where $\Cone_{z_0}(z,\theta)$ denotes the cone with apex $z_0$, direction $z$ and aperture $\theta$. Let us also define:
\begin{eqnarray*}
&&\Cyl(A):=B_{\mathbb{R}^d}\left(0,A\right) \times \left(0,\frac{3}{2}\right], \\
&&\Cyl'(A,a):=B_{\mathbb{R}^d}\left(0,A\right) \times \left[\frac{1}{2}e^{-a},\frac{3}{2}\right], \\
&&\Cyl''(A,a):=B_{\mathbb{R}^d}\left(0,Ae^{-a}\right) \times \left(0,\frac{3}{2}e^{-a}\right].
\end{eqnarray*}
The sets $\Vois(A,h), \Vois'(A,a,h), \Vois''(A,a,h)$ and $\Cyl(A), \Cyl'(A,a), \Cyl''(A,a)$ are represented in Figure \ref{Fig:figcouplage}. We will use the following geometrical fact:
\begin{lemma}

\label{Lem:geomlemma}
For any $A,a \ge 0$, $h$ can be chosen large enough such that
\begin{eqnarray}
\label{E:inclusionsets}
\Vois(A,h) \subset \Cyl(A), \quad \Vois'(A,a,h) \subset \Cyl'(A,a), \text{ and }\Vois''(A,a,h) \subset \Cyl''(A,a).
\end{eqnarray}
\end{lemma}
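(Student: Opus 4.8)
The plan is to work in the upper half-space model $(H,ds^2)$, compute explicitly the region $\Cone_{O(h)}(I_\infty,\theta)$ for a small aperture $\theta$, and compare it with the Euclidean cylinders $\Cyl(\cdot)$. The key observation is that the hyperbolic sphere $S(O(h),h)$ is a Euclidean sphere passing through the boundary point $I_\infty=(0,0)$, and that as $h\to\infty$ the origin $O(h)=(0,e^h)$ recedes toward $\infty$ in such a way that the hyperbolic cone $\Cone_{O(h)}(I_\infty,\theta)$, restricted to a bounded Euclidean region near $I_\infty$, flattens out onto a Euclidean cylinder. Concretely, geodesics in $H$ through $I_\infty$ and $\infty$ are vertical Euclidean half-lines $\{x\}\times(0,\infty)$, and more generally geodesics emanating from $O(h)$ toward boundary points near $I_\infty$ are Euclidean circular arcs perpendicular to $\mathbb{R}^d\times\{0\}$; I would parametrize the boundary points at hyperbolic angle $\le\theta$ from $u=(0,\dots,-1)$ seen from $O(h)$ and show their abscissae fill exactly a Euclidean ball $B_{\mathbb{R}^d}(0,\rho(h,\theta))$ with $\rho(h,\theta)\to$ a finite limit.

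First I would record the elementary half-space formulas: for a geodesic from $O(h)=(0,e^h)$ to a boundary point $(x_0,0)$ with $\|x_0\|=s$, the geodesic is the arc of the Euclidean circle through $(0,e^h)$ and $(x_0,0)$ meeting the boundary orthogonally; its Euclidean center lies on the boundary hyperplane, and one computes the hyperbolic angle $\alpha(s,h)$ between this geodesic and the downward vertical $u$ at $O(h)$ using conformality of the half-space model (hyperbolic angle equals Euclidean angle). A direct computation gives $\tan\alpha = \dfrac{2 s\, e^h}{e^{2h}-s^2}$ or an equivalent closed form; in particular for $\alpha\le Ae^{-h}$ one gets, after expanding to leading order in $e^{-h}$, that $s\le A + o(1)$ as $h\to\infty$. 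Then, cutting the cone at the hyperbolic spheres $S(O(h),h)$ and $S(O(h),h+a)$ — which in Euclidean terms are spheres through $(0,0)$, namely the sphere of Euclidean center $(0, e^h\cosh h / \sinh h)$-type expressions, equivalently the ordinate-levels $y=1$ (up to $o(1)$) for radius $h$ and $y=e^{-a}$ (up to $o(1)$) for radius $h+a$ — I would verify that $\Vois(A,h)$ lies below Euclidean ordinate $\tfrac32$, that $\Vois'(A,a,h)$ lies between ordinates $\tfrac12 e^{-a}$ and $\tfrac32$, and that $\Vois''(A,a,h)$ lies below ordinate $\tfrac32 e^{-a}$ and within abscissa-radius $Ae^{-a}$, for all $h$ large enough. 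The extra slack ($\tfrac32$ versus $1$, $\tfrac12$ versus $1$) in the definitions of the cylinders is precisely what absorbs the $o(1)$ error terms, so no sharp estimate is needed.

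The main obstacle is bookkeeping rather than conceptual: one must keep track of where the hyperbolic sphere $S(O(h),h+a)$ sits in Euclidean coordinates (it is a Euclidean sphere tangent to nothing but passing through $I_\infty$, whose top point $O(h)$ and whose bottom point on the ray from $O(h)$ through $I_\infty$ are at Euclidean heights computable via $d(O(h),\cdot)=h+a$ and the half-space distance formula $\cosh d((x,y),(x',y')) = 1 + \frac{\|x-x'\|^2+(y-y')^2}{2yy'}$), and then intersect this ball with the cone and bound the resulting set in the product form $B_{\mathbb{R}^d}(0,\cdot)\times(\cdot,\cdot]$. I would handle this by first treating the purely radial ray toward $I_\infty$ to fix the ordinate scales, then the purely angular spread at fixed radius to fix the abscissa radius, and finally noting monotonicity of both the cone and the annulus in the relevant parameters to conclude the product inclusion; taking $h$ large enough simultaneously for the three inclusions (a finite intersection of "$h$ large" conditions) completes the proof.
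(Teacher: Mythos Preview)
Your approach is essentially the same as the paper's: both compute the hyperbolic angle $\widehat{zO(h)I_\infty}$ in the conformal half-space model to get the formula $\tan\alpha = \dfrac{2\|x\|e^h}{e^{2h}-\|x\|^2-y^2}$ (the paper packages this as Proposition~\ref{Prop:angleformula}), deduce $\|x\|\le A$ from $\alpha\le Ae^{-h}$ for $h$ large, and then use the half-space distance formula with $d(O(h),z)\ge h$ to bound the ordinate $y$, relying on the slack factors $\tfrac32$ and $\tfrac12$ in the cylinder definitions to absorb lower-order terms. One minor slip: the hyperbolic spheres $S(O(h),h)$ and $S(O(h),h+a)$ do \emph{not} pass through the boundary point $I_\infty=(0,0)$ as Euclidean spheres; their lowest points on the vertical axis are at $y=1$ and $y=e^{-a}$ exactly, which you in fact state correctly a few lines later, so this does not affect the argument.
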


\dave{Let us now prove (iii) of Theorem \ref{Thm:mainthm}. }For $A,h \ge 0$, let us define the event $E(A,h)$ saying that every infinite backward path converging to $I_\infty$ in $\RST(h)$ restricted to the annulus $\mathbb{H}^{d+1} \backslash \tvc{B(O(h),h)} $ are contained in $\Vois(A,h)$:
\begin{eqnarray*}
E(A,h):=\{\forall z \in \RST(h) \cap (\mathbb{H}^{d+1} \backslash \tvc{B(O(h),h)}),~I_\infty \in \overline{\mathcal{D}_{\RST(h)}(z)} \implies z \in \Vois(A,h)\},
\end{eqnarray*}
where $\mathcal{D}_{\RST(h)}(z)$ denotes the \tvc{descendant} subtree of $z$ in $\RST(h)$. Let us also define, for $A,h,a\geq 0$, the event $E'(A,h,a)$ saying that every infinite backward path converging to $I_\infty$ in $\RST(h)$ restricted to the annulus $\mathbb{H}^{d+1} \backslash B(O(h),h+a)$ are contained in $\Vois''(A,a,h)$:
\begin{eqnarray*}
E'(A,a,h):=\{\forall z \in \RST(h) \cap (\mathbb{H}^{d+1} \backslash \tvc{B(O(h),h)}),~I_\infty \in \overline{\mathcal{D}_{\RST(h)}(z)} \implies z \in \Vois''(A,a,h)\}.
\end{eqnarray*}
The following proposition asserts that, uniformly in $h$, the events $E(A,h)$ and \tvc{$E'(A,a,h)$} occur with high probability when $A$ is large.

\begin{proposition}
\label{Prop:cylinder}
We have
\begin{eqnarray*}
\lim_{A \to \infty} \liminf_{h \to \infty} \mathbb{P}[E(A,h)]=1, \quad \lim_{A \to \infty} \liminf_{h \to \infty} \mathbb{P}[E'(A,a,h)]=1 \text{ for any } a \ge 0.
\end{eqnarray*}
\end{proposition}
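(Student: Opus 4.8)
The plan is to prove Proposition \ref{Prop:cylinder} by transporting the global angular-deviation estimate of Proposition \ref{Prop:globalfluct} from the rotation-invariant (ball-model) picture to the half-space picture centered at $O(h)$, uniformly in $h$. First I would rephrase the event $E(A,h)$ in terms of the $\MBD$ quantity: if $(z_n)$ is an infinite backward path in $\RST(h)$ converging to $I_\infty$, then any point $z$ on it at hyperbolic distance $r \ge h$ from $O(h)$ lies on the trajectory $\pi(z')$ of points $z'$ arbitrarily far out, so the angular gap $\widehat{z \, O(h) \, I_\infty}$ is controlled by $\MBD_{r}^{\infty}(\mathcal{A}_r(z))$ evaluated at the ancestor of $z$ at level $r$ (together with the fact that the direction $u = (0,\dots,-1)$ toward $I_\infty$ is the limiting direction of the path). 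More precisely, $z \notin \Vois(A,h)$ forces $z$ to sit at some level $r \ge h$ in a direction making angle at least $Ae^{-r}$ with $u$ — and hence, via the telescoping that defines $\CFD$, forces $\MBD_{r'}^\infty$ to be at least of order $Ae^{-r'}$ at \emph{some} level $r' \in [h, r]$ for the corresponding ancestor. So $\{E(A,h)^c\}$ is contained in the event that, for some level $r_0 \ge h$, there is a point of $\RST(h)$ in $B_{S(O(h),r_0)}(u, Ae^{-r_0})$ whose $\MBD_{r_0}^\infty$ exceeds $cAe^{-r_0}$ (one must be a little careful: it could also be a point \emph{inside} the cone but whose descendants exit it, which is again an $\MBD$ event at a deeper level).

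Second I would run a union bound over a geometric sequence of levels $r_k = h + k\delta$. For a fixed $r_0$, Markov's inequality applied to Proposition \ref{Prop:globalfluct} gives
\begin{eqnarray*}
\mathbb{P}\Big[\exists\, z \in B_{S(r_0)}(u, A'e^{-r_0}) \cap \RST,\ \MBD_{r_0}^\infty(z) \ge \tfrac{c A}{2} e^{-r_0}\Big]
\le \frac{C_{\fluct} (A')^d e^{-r_0 p}}{(cA/2)^p e^{-r_0 p}}
= C'\, \frac{(A')^d}{A^p},
\end{eqnarray*}
which is $h$-independent and, crucially, summable over $r_0 = r_k$ once we note that the relevant cone radius at level $r_k$ is a fixed multiple $A'$ of $A$ (the cone $\Vois(A,h)$ has $O(h)$-aperture $Ae^{-h}$, which at level $r_k$ subtends an angular window of the same order $Ae^{-r_k}$ up to constants, by the hyperbolic law of cosines \eqref{E:cosinelaw}). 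Summing the geometric-type series and then letting $A \to \infty$ kills it; and since Proposition \ref{Prop:globalfluct} is stated for the RST rooted at an arbitrary point, replacing the ball-model origin $0$ by $O(h)$ and the reference direction by $u$ changes nothing, so the bound is genuinely uniform in $h$, giving $\liminf_{h\to\infty}\mathbb{P}[E(A,h)] \ge 1 - C'A^{d-p}/(1-e^{-\delta'})$ for suitable $\delta'$, and hence the first limit. For $E'(A,a,h)$ the argument is identical with the starting level shifted from $h$ to $h+a$ and the cone aperture from $Ae^{-h}$ to $Ae^{-h-a}$; since $a$ is fixed this only changes the constants, and the same $A \to \infty$ argument applies, uniformly in $h$ (with $a$ fixed).

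The main obstacle I anticipate is bookkeeping the passage from "$z$ exits the cone $\Vois(A,h)$" to "$\MBD$ is large at some level", keeping all constants independent of $h$. The subtlety is twofold: (a) the cone $\Vois(A,h)$ is defined with apex $O(h)$ and a \emph{fixed} aperture $Ae^{-h}$, whereas $\MBD$ and $\CFD$ in Proposition \ref{Prop:globalfluct} are measured against an aperture that \emph{shrinks} like $e^{-r}$ with the level $r$; one must check, using \eqref{E:cosinelaw} and elementary hyperbolic trigonometry, that a point at level $r \ge h$ lying outside $\Vois(A,h)$ indeed has $\widehat{z\,O(h)\,u} \gtrsim Ae^{-r}$ (not merely $\gtrsim Ae^{-h}$ — which would be far too weak), i.e. that the cone's angular width as seen from $O(h)$ at level $r$ really does decay like $Ae^{-r}$. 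This is where the non-Euclidean geometry does the work and it should follow from the law of cosines, but it needs to be done carefully. (b) One must make sure the telescoping inequality $\widehat{z\,O(h)\,I_\infty} \le \CFD_{r_0}^{r}(z) + (\text{residual angle at level }r_0)$ is valid and that summing the errors across the dyadic levels $r_k$ converges — this is exactly why the exponent $p$ must be taken large (strictly larger than $d$), so that both the Markov bound and the level-sum converge. Once these two points are nailed down, the rest is a routine union bound.
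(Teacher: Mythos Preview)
Your high-level strategy (Markov on Proposition~\ref{Prop:globalfluct}) is correct and is exactly what the paper does. But the paper's decomposition is considerably simpler than the one you propose: rather than a union bound over a sequence of levels $r_k = h + k\delta$, the paper works at the \emph{single} level $h$ and decomposes dyadically over the angle $\widehat{z\,O(h)\,u}$ of the point $z$ where the backward path crosses $S(h)$. Concretely, if $(z_n)$ is an infinite backward path in $\RST(h)$ converging to $I_\infty$ and $z$ is its level-$h$ point, then $\MBD_h^\infty(z) \ge \widehat{z\,O(h)\,u}$ (because the direction of $z_n$ converges to $u$), and also $\MBD_h^\infty(z) \ge Ae^{-h}$ whenever the path ever exits the cone. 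Setting $F_m(h) := \{\exists\, z \in B_{S(h)}(u, 2^{m+1}e^{-h}):\ \MBD_h^\infty(z) > 2^m e^{-h}\}$, one gets $E(2^n,h)^c \subset \bigcup_{m \ge n} F_m(h)$, and Markov together with Proposition~\ref{Prop:globalfluct} gives $\mathbb{P}[F_m(h)] \le C_{\fluct}\,2^{m(d-p)}$ uniformly in $h$; summing over $m \ge n$ yields the first limit. The second limit follows from the first via the dilation isometry $(x,y) \mapsto (e^a x, e^a y)$.

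Your obstacle (a) reflects a confusion about the cone geometry. The cone $\Vois(A,h)$ has \emph{constant} aperture $Ae^{-h}$ as seen from the apex $O(h)$, at every level $r \ge h$: a point at level $r$ outside it has $\widehat{z\,O(h)\,u} > Ae^{-h}$, which is \emph{stronger} than $Ae^{-r}$ (for $r \ge h$), not ``far too weak''. Relatedly, your assertion that ``the relevant cone radius at level $r_k$ is a fixed multiple $A'$ of $A$'' is wrong: in the $e^{-r_k}$ scaling of Proposition~\ref{Prop:globalfluct}, the cone width at level $r_k$ corresponds to $A' = A e^{r_k - h}$, which grows exponentially in $k$. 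With this correct value of $A'$ and threshold $Ae^{-h}$, your level-by-level union bound does go through (each term contributes $O(A^{d-p}e^{(d-p)(r_k - h)})$, which is summable for $p>d$), but the resulting argument is just a more laborious re-derivation of the paper's single-level dyadic bound. No hyperbolic trigonometry beyond the definition of the cone is needed.
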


Let us assume Propositions \ref{Prop:coupling}, \ref{Prop:cylinder} and Lemma \ref{Lem:geomlemma} for the moment and let us prove part (iii) of Theorem \ref{Thm:mainthm}. Let $h \ge 0$. \tvc{By Point (i) of Theorem \ref{Thm:mainthm}, there exists a.s. an infinite backward path converging to $I_\infty$ in $\RST(h)$.} Let us define the event
\begin{eqnarray*}
U(h):=\{\text{there is a unique infinite backward path converging to $I_\infty$ in $\RST(h)$}\}.
\end{eqnarray*}By \tvc{isometric} invariance, $\mathbb{P}[U(h)]$ is independent of $h$. Let us suppose for contradiction that $q:=\mathbb{P}[U(h)^c]>0$. For $A,a,h \ge 0$, let us define the event
\begin{eqnarray*}
\CO(A,a,h):=\{\forall z \in \mathcal{N} \cap \Cyl'(A,a),~A_{\RST(h)}(z)=A_\DSF(z)\}.
\end{eqnarray*}
By Proposition \ref{Prop:cylinder}, $A\dave{=A(a)}$ can be chosen such that
\begin{eqnarray*}
\liminf_{h \to \infty} \mathbb{P}[E(A,h)]>1-q/\dave{8}, \quad \mbox{ and }\quad \liminf_{h\to \infty} \mathbb{P}[E'(A,a,h)]>1-q/\dave{8}.
\end{eqnarray*}
Then, by Proposition \ref{Prop:coupling} applied to the compact set $K:=\Cyl'(A,a)$ and Lemma \ref{Lem:geomlemma}, $h\dave{=h(a)}$ can be chosen large enough such that inclusions (\ref{E:inclusionsets}) hold and such that
\begin{eqnarray*}
\mathbb{P}[\CO(A,a,h)]>1-q/4, \quad \mathbb{P}[E(A,h)] \ge 1-q/4 \text{ and } \mathbb{P}[E'(A,a,h)] \ge 1-q/4.
\end{eqnarray*}

Let us define the event $Z(a)$ as
\begin{eqnarray*}
Z(a):=U(h\dave{(a)})^c \cap E(A\dave{(a)},h\dave{(a)}) \cap E'(A\dave{(a)},a,h\dave{(a)}) \cap \CO(A\dave{(a)},a,h\dave{(a)}),
\end{eqnarray*}
and define
\begin{eqnarray*}
Z:=\bigcap_{a_0>0} \bigcup_{a \ge a_0} Z(a).
\end{eqnarray*}
For the choices of $a$ done before, $\mathbb{P}[Z(a)] \ge q/4$ and so $\mathbb{P}[Z] \ge q/4>0$. On the event $Z(a)$, and because inclusion (\ref{E:inclusionsets}) holds, there exists two infinite backward paths in $\RST(h)$ whose restrictions to $\mathbb{R}^d \times (0,3/2]$ are contained in $\Cyl(A,a)$ and converging to $I_\infty$, and intersecting $\Cyl''(A,a)$. These two infinite backward paths coincide with those of $\DSF$ inside $\Cyl'(A,a)$. Thus, in $\DSF$, there exist two infinite backward paths contained in $\Cyl(A,a)$ and intersecting $\Cyl''(A,a)$. Therefore, since this is true for all $a>0$ \dave{sufficiently large}, on the event $Z$, it is possible to construct two infinite backward paths converging to $I_\infty$ in $\DSF$ using the fact that $\DSF$ is locally finite (it is true by \cite[Proposition 2.10]{HyperbolicDSF}). However, by \cite[Theorem 1.2]{HyperbolicDSF}, there exists almost surely a unique infinite backward path converging to $I_\infty$ in $\DSF$. This leads to a contradiction, which achieves the proof of (iii).

\begin{figure}[!h]
\centering
\caption{Representation of the sets $\Vois(A,h)$, $\Vois'(A,a,h)$, $\Vois''(A,a,h)$ and $\Cyl(A)$, $\Cyl'(A,a)$, $\Cyl''(A,a)$. The backward paths of $\RST(h)$ converging to $0$ (in \textcolor{blue}{blue}) are all contained in $\Vois''(A,a,h)$ up to level $h+a$ and contained in $\Vois(A,a,h)$ up to level $h$. In the dashed area ($\Cyl'(A,a)$), the $\DSF$ and $\RST(h)$ coincide.}
\label{Fig:figcouplage}
\begin{tikzpicture}[scale=3,decoration={brace}]

\draw (0,0) node[below] {$0$};
\draw[->] (-2.5,0)--(2.5,0) node[below right] {$x \in \mathbb{R}^d$};
\draw[->] (0,0)--(0,3) node[above left]{$y \in \mathbb{R}$};

\draw (0,2.718) arc (137.844:180:4.05);
\draw (0,2.718) arc (164.492:180:10.166);
\draw (0,2.718) arc (15.508:0:10.166);
\draw (0,2.718) arc (42.156:0:4.05);
\draw[dashed] (0.888,1.126) arc (-73.86:-106.14:3.195);
\draw[dashed] (1.026,0.421) arc (-84.028:-95.972:9.859);
\draw (0,2.718) node[above right] {O(h)};

\draw (-2,1.5) -- (2,1.5) -- (2,0) -- (-2,0) -- (-2,1.5);
\draw (-2,0.184) -- (2,0.184);
\draw (-0.736,0.552) -- (0.736,0.552) -- (0.736,0) -- (-0.736,0) -- (-0.736,0.552);
\draw[decorate] (0.746,0.552) -- (0.746,0);
\draw (0.746,0.276) node[right]{$\Cyl''(A,a)$};
\draw[decorate] (2.01,1.5) -- (2.01,0.184);
\draw (2.01,0.842) node[right]{$\Cyl'(A,a)$};
\draw[decorate] (-2.01,0) -- (-2.01,1.5) ;
\draw (-2.01,0.75) node[left]{$\Cyl(A)$};

\draw[decorate] (0.908,1.126) -- (1.046,0.421);
\draw (0.977,0.784) node[right]{$\Vois'(A,a,h)$};
\draw[decorate] (-1.088,0.0) -- (-0.928,1.126);
\draw (-1.008,0.563) node[left]{$\Vois(A,h)$};
\draw[decorate]  (-0.38,0.0) -- (-0.373,0.375);
\draw (-0.377,0.188) node[left]{$\Vois''(A,a,h)$};

\draw[blue] (0,0) .. controls (-0.4,0.3) and (-0.5,1.3) .. (-1.5,2);
\draw[blue] (0,0) .. controls (-0.1,0.3) and (-0.3,1.3) .. (-0.5,2);
\draw[blue] (0,0) .. controls (0.2,0.3) and (0.5,1.3) .. (0.5,2);
\draw[blue] (0,0) .. controls (0.3,0.3) and (0.7,1.3) .. (1.5,2);

\draw (-0.888,1.126) node[left]{$S(O(h),h)$};
\draw (1.026,0.421) node[right]{$S(O(h),h+a)$};

\end{tikzpicture}
\end{figure}
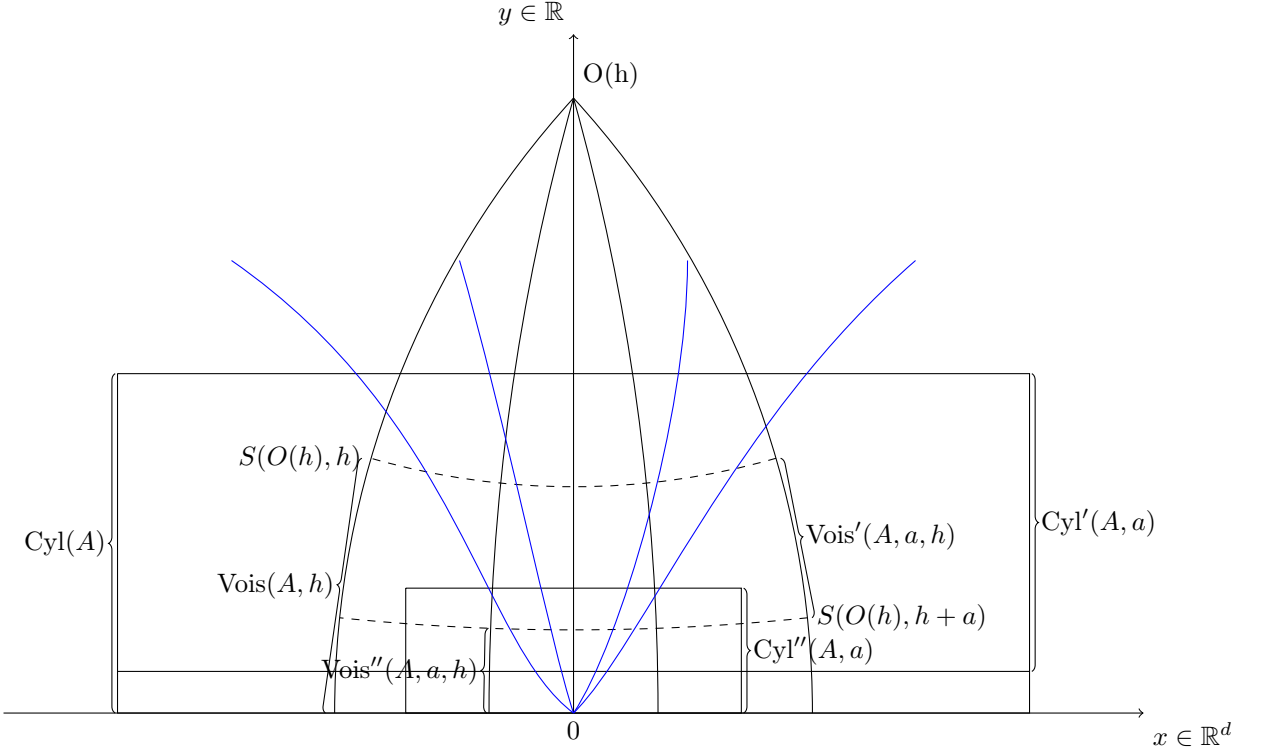

It remains to prove Propositions \ref{Prop:coupling} and \ref{Prop:cylinder}.

\begin{proof}[Proof of Proposition \ref{Prop:coupling}]
Recall the notations \eqref{def:ADSF} and \eqref{def:ARST}. Let us define, for $z=(x,y) \in \mathcal{N}$ and $h \ge 0$:
\tvc{\begin{eqnarray*}
B^+_\DSF(z):= & B(z,d(z,A_{\RST}(z))) \cap (\mathbb{R}^d \times (y,\infty)) , \\
 B^+_{\RST(h)}(z):= & B(z,d(z,A_{\DSF}(z))) \cap B(O(h),d(O(h),z)).
\end{eqnarray*}}
Let $K \subset \mathbb{H}^{d+1}$ be some compact set. For any given $z \in \mathcal{N} \cap K$, $h \ge 0$, $A_\DSF(z)=A_{\RST(h)}(z)$ if and only if
$\mathcal{N} \cap (B^+_\DSF(z) \Delta B^+_{\RST(h)}(z))=\emptyset$.
\tvc{For any $z=(x,y) \in \mathbb H^{d+1}$, because 
$B(O(h),d(O(h),z))$ converges to $\mathbb{R}^d\times (y,\infty)$ when $h\to \infty$, we have that
$\Vol(B^+_\DSF(z) \Delta B^+_{\RST(h)}(z)) \to 0$ (recall that $\Vol$ is the hyperbolic volume).}

\tvc{We now use Campbell formula \cite[Prop. 13.1.IV]{daley-verejones}. Let $z\in \mathbb{H}^{d+1}$ and denote by  $\mathbb{P}_{z'}$ the Palm distribution of $\mathcal{N}$ conditionally on having a point at $z'\in \mathbb{H}^{d+1}$. Then,
\begin{eqnarray*}
&&\mathbb{E}\left[ \#\{z' \in \mathcal{N} \cap B(z,d(z,A(z))),~\mathcal{N} \cap (B^+_\DSF(z) \Delta B^+_{\RST(h)}(z')) \neq \emptyset\}\right] \\
&&=\lambda \int_{B(z,d(z,A(z)))} \mathbb{P}_{z'}\left[ \mathcal{N} \cap (B^+_\DSF(z) \Delta B^+_{\RST(h)}(z')) \neq \emptyset \right]~d\Vol(z') \\
&&=\lambda \int_{B(z,d(z,A(z)))} 1-\exp(-\lambda \Vol(B^+_\DSF(z) \Delta B^+_{\RST(h)}(z'))) ~d\Vol(z') \to 0 \text{ as } h \to \infty
\end{eqnarray*}
by dominated convergence. Proposition \ref{Prop:coupling} follows.}
\end{proof}

\begin{proof}[Proof of Proposition \ref{Prop:cylinder}]
Let $n \in \mathbb{N}$ and $h \ge 0$. Let us define the event
\begin{eqnarray*}
F_n(h):=\{\exists z \in \tvc{S(O(h),h)\cap \Cone_{O(h)}(I_\infty ,2^{n+1}e^{-h})},~\MBD_h^\infty(z)>2^ne^{-h}\}.
\end{eqnarray*}
\tvc{Let us first show that $E(2^n,h)^c \subset \bigcup_{m \ge n} F_m(h)$, which will imply:
\begin{eqnarray}
\mathbb{P}[E(2^n,h)] \ge 1-\sum_{m \ge n}\mathbb{P}[F_m(h)].\label{etape10}
\end{eqnarray}

 If $E(2^n,h)$ does not occur, then there exists some $z' \notin \Vois(2^{n},h)$ such that $I_\infty \in \overline{\mathcal{D}_{\RST(h)}(z')}$. Let $z$ be the ancestor of $z'$ on $S(O(h),h)$. Either $\widehat{zO(h)I_\infty}<2^{n+1} e^{-h}$ and we set $m=n$, or there exists $m > n$ such that $2^m e^{-h} \le \widehat{zO(h)I_\infty}<2^{m+1} e^{-h}$. Then $z \in S(O(h),h)\cap \Cone_{O(h)}(I_\infty ,2^{m+1}e^{-h})$. If $m=n$, $\MBD_h^\infty(z)\geq \widehat{z'O(h)I_\infty}>2^n e^{-h}$ and $F_n(h)$ occurs. If $m>n$, $\MBD_h^\infty(z) \ge \widehat{zO(h)I_\infty} \ge 2^m e^{-h}$ and $F_m(h)$ occurs.} This proves \eqref{etape10}.
\tvc{
To conclude, we now upper-bound $\mathbb{P}[F_m(h)]$. On $F_m(h)$, the following occurs:
\begin{eqnarray}
\label{E:eqnsumproba}
\sum_{z \in S(O(h),h)\cap \Cone_{O(h)}(I_\infty ,2^{m+1}e^{-h}) \cap \RST}\MBD_h^\infty(z)^p>2^{mp}e^{-ph},
\end{eqnarray}
thus, by Markov inequality,
\begin{eqnarray*}
\mathbb{P}\left[ F_m(h) \right] \le 2^{-mp}e^{ph}\mathbb{E}\left[ \sum_{z}\MBD_h^\infty(z)^p \right] \leq C_{\fluct} 2^d 2^{m(d-p)}
\end{eqnarray*}
where we used Proposition \ref{Prop:globalfluct} with $r_0=h$, $A=2^{m+1}$. The constant $C_{\fluct}>0$ appears in Proposition \ref{Prop:globalfluct} and depends only on $p$. Thus, for some $p>d$,
\begin{eqnarray*}
\mathbb{P}[E(2^n,h)] \ge 1-2^d C_{\fluct} \frac{2^{n(d-p)}}{1-2^{d-p}}.
\end{eqnarray*}The right hand side converges to 1 when $n\to \infty$, and 
this proves the first part of Proposition \ref{Prop:cylinder}. The second part be deduced from the first part by applying the dilation $(x,y) \mapsto (e^ax,e^ay)$, which is an isometry of $(H,ds^2)$.}
\end{proof}

\begin{proof}[Proof of Lemma \ref{Lem:geomlemma}]
Let $A,a \ge 0$ be fixed. For $h\geq 0$, let $z=(x,y) \in \Vois(A,h)$. Considering the totally geodesic plane containing $I_\infty,z$ and $O(h)$ (represented by a half-plane in $H$), it is possible to suppose $d=1$ without loss of generality. \dave{We apply the distance and angle formulas in $(H,ds^2)$ (see e.g. \cite[Proposition 2.1]{HyperbolicDSF}). \\

Let $z=(x,y) \in \Vois(A,h)$. We recall that the angle $\widehat{I_\infty O(h) z}$ can be computed as a function of $x$, $y$ and $h$:
\begin{eqnarray}
\widehat{I_\infty O(h) z}=\arctan\left|\frac{2xe^h}{e^{2h}-x^2-y^2} \right|.\label{eq:angle_appendix}
\end{eqnarray}
This formula can be obtained by noticing that the Poincar\'e open-ball model is conform and by using the fact that the application $\phi:H \to I$, sending the half-space model to the open-ball model, and defined as:
\begin{eqnarray*}
(x,y) \mapsto \frac{1}{x^2+(y+1)^2}\left( x^2+y^2-1,-2x \right)
\end{eqnarray*}
is an isometry sending $(0,1)$ on $(0,0)$. Then, the angle $\widehat{zO(h)I_\infty}=\widehat{\phi'(z)0\phi'(I_\infty})$, where the second angle is taken in the disc model. Since $\phi'(I_\infty)=(-1,0)$, if $y<e^h$ then $\widehat{z0(h)I_\infty}<\frac{\pi}{2}$ and we can establish \eqref{eq:angle_appendix}.\\
On the one hand, $\widehat{z O(h) I_\infty} \le Ae^{-h}$, so, taking $h$ large enough such that $Ae^{-h}<\pi/2$, we have
\begin{eqnarray*}
\arctan\left|\frac{2xe^h}{e^{2h}-x^2-y^2} \right| \le Ae^{-h}.
\end{eqnarray*}Thus, for $h$ large enough,
\begin{eqnarray*}
|x|e^{-h} \le \arctan|2xe^{-h}| \le \arctan\left|\frac{2xe^h}{e^{2h}-x^2-y^2} \right| \le Ae^{-h},
\end{eqnarray*}
so $|x| \le A$.\\
On the other hand, $d(O(h),z) \ge h$. 
Recall that for $z_1=(x_1,y_1)$ and $z_2=(x_2,y_2) \in H$, 
\begin{eqnarray}
\label{E:distanceformula}
d(z_1,z_2)=2\tanh^{-1}\left(\sqrt{\frac{\kappa^2+(v-1)^2}{\kappa^2+(v+1)^2}}\right)=2\tanh^{-1}\left(\sqrt{1-\frac{4v}{\kappa^2+(v+1)^2}}\right),
\end{eqnarray}where $\kappa=\|x_1-x_2\|/y_1$ and $v=y_2/y_1$. 
Applying this formula for $z=(x,y)$ and $O(h)=(0,e^h)$,
\begin{eqnarray*}
&&2\tanh^{-1}\left(\sqrt{1-\frac{4ye^h}{A^2+(y+e^h)^2}}\right) \\
&&\overset{|x| \le A}{\ge} 2\tanh^{-1}\left(\sqrt{1-\frac{4ye^{-h}}{(xe^{-h})^2+(ye^{-h}+1)^2}}\right)=d(O(h),z) \ge h.
\end{eqnarray*}
This implies that 
\[\sqrt{1-\frac{4ye^{-h}}{(xe^{-h})^2+(ye^{-h}+1)^2}}\geq \tanh\big(\frac{h}{2}\big)=\frac{e^{h/2}-e^{-h/2}}{e^{h/2}+e^{-h/2}},\]from which we deduce that:
\begin{eqnarray*}
&e^{h} &\le \frac{1+\sqrt{1-\frac{4ye^h}{A^2+(y+e^h)^2}}}{1-\sqrt{1-\frac{4ye^h}{A^2+(y+e^h)^2}}}
=\frac{e^h}{y}+o(e^h) \text{ when } h \to \infty,
\end{eqnarray*}
for $h$ large enough this implies $y \le 3/2$. The two other inclusions are shown by similar computations.}
\end{proof}

\section{Proof of Proposition \ref{Prop:controlannulus}}
\label{S:proofcontrolannulus}

We use a \tvc{block} control argument similar to \cite[Section 4.3, Proof of Prop. 4.6]{HyperbolicDSF}. Let $\delta>0$ small and $A>0$ large that will be chosen later. For $r_0>0$ and $z \in S(r_0)$, let us define
\begin{eqnarray}
\Psi_1(r_0,z)&:=&\Cone(z,3Ae^{-r_0}) \cap (B(r_0+\delta) \backslash B(r_0)), \nonumber\\
\Psi_2(r_0,z)&:=&\Cone(z,Ae^{-r_0}) \cap (B(r_0) \backslash B(r_0-1)).
\end{eqnarray}
A point $z \in S(r_0)$ is said to be \emph{good} if the following event $G(r_0,z)$ occurs (see Figure \ref{Fig:goodpoint}):
\begin{eqnarray}
G(r_0,z):=\left\{\mathcal{N} \cap \Psi_1(r_0,z)=\emptyset \text{ and } \mathcal{N}\cap \Psi_2(r_0,z) \neq \emptyset\right\}.
\end{eqnarray}

\begin{figure}
    \centering
    \begin{tikzpicture}[scale=4]
        \draw (0,0) node[below] {$0$};
        \draw (0,0) -- (5:1.6);
        \draw (0,0) -- (25:1.6);
        \draw (0,0) -- (45:1.6);
        \draw (0,0) -- (65:1.6);
        \draw (5:1.6) arc (5:65:1.6);
        \draw[blue, line width=2] (25:1.5) arc (25:45:1.5);
        \draw (5:1.5) arc (5:65:1.5);
        \draw (5:1) arc (5:65:1);
        \fill (37:1.3) circle (0.5pt);
        \draw (37:1.3) node[below] {$\in \mathcal{N}$};
        \draw (15:1.55) node {$\emptyset$};
        \draw (35:1.55) node {$\emptyset$};
        \draw (55:1.55) node {$\emptyset$};
        \fill (35:1.5) circle (0.5pt);
        \draw (35:1.5) node[left] {$z$};
        \draw (5:1) node[below] {$S(r_0-1)$};
        \draw (5:1.5) node[below] {$S(r_0)$};
        \draw (65:1.6) node[above] {$S(r_0+\delta)$};
        \draw[dashed] (0,0) -- (35:0.7);
        \draw (35:0.5) arc (35:25:0.5);
        \draw (35:0.5) node[right] {$Ae^{-r_0}$};
        \fill[pattern=dots] (65:1.6) -- (65:1.5) arc(65:5:1.5) -- (5:1.6) arc(5:65:1.6);
        \fill[pattern=crosshatch dots] (45:1.5) -- (45:1) arc(45:25:1) -- (25:1.5) arc(25:45:1.5);
        \filldraw[pattern=dots] (1.7,1)--(2,1)--(2,1.2)--(1.7,1.2) -- (1.7,1);
        \filldraw[pattern=crosshatch dots] (1.7,0.7)--(2,0.7)--(2,0.9)--(1.7,0.9) -- (1.7,0.7);
        \draw (2,1.1) node[right] {$\Psi^1(r_0,z)$};
        \draw (2,0.8) node[right] {$\Psi^2(r_0,z)$};
    \end{tikzpicture}
    \caption{The point $z$ is a \emph{good point}; the fluctuations of trajectories crossing $B_{S(r_0)}(z,Ae^{-r_0})$ (in \textcolor{blue}{blue}) are well controlled.}
    \label{Fig:goodpoint}
\end{figure}
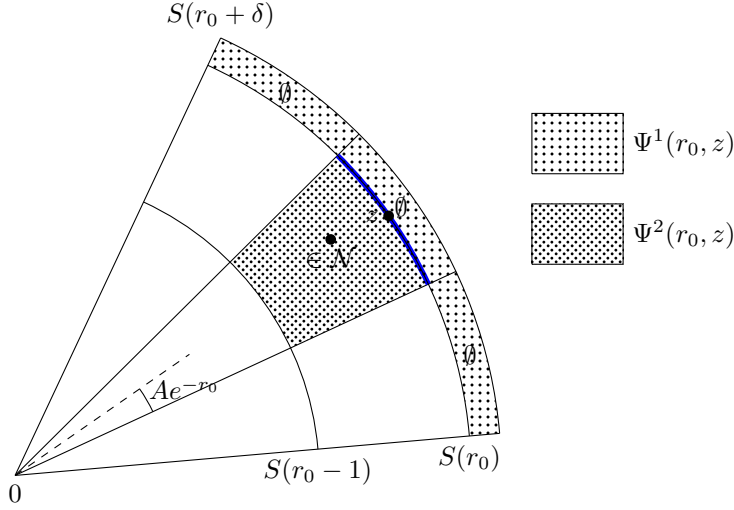

Let us define the random subsets $\hat{\chi}(r_0) \subset S(r_0)$ and $\chi(r_0) \subset \mathbb{H}^{d+1} \backslash \{0\}$ as
 \tvc{
\begin{eqnarray}
\hat{\chi}(r_0):=\big\{z \in S(r_0),\, G(r_0,z) \text{ occurs}\big\} , \quad \chi(r_0):=\bigcup_{z=(r_0;u) \in \hat{\chi}(r_0)} ]0,(r_0+\delta;u)) \subset \mathbb{H}^{d+1} \backslash \{0\}.
\end{eqnarray}}
The region $\chi(r_0)$ is the \emph{controlled region}, where the cumulative forward deviations in the annulus $B(r_0+\delta) \backslash B(r_0)$ will be upper-bounded. This control of fluctuations is given by the following lemma proved in Section \ref{S:proofoflemmas}, and from which we will obtain the upper bound of $\MBD_r^{r+\delta}(z)$ appearing in Proposition \ref{Prop:controlannulus}:
\tvc{\begin{lemma}
\label{Lemma:geometriccontrol}
There exists some deterministic constant $C_{\geom}>0$ such that for any $r_0 >0$,
\[\sup_{s\in [r_0,r_0+\delta]}\sup_{z\in \mathcal{L}_s \cap \chi(r_0)}\CFD_{r_0}^s(z) \le C_{\geom}e^{-r_0}.\]
\end{lemma}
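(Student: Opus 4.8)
The plan is to show that, inside the controlled region $\chi(r_0)$, any edge of the RST crossing the thin annulus $S(r_0+\delta) \setminus S(r_0)$ must make an angular jump (as seen from $0$) that is comparable to $e^{-r_0}$, and then sum these jumps along a trajectory. First I would fix $s \in [r_0, r_0+\delta]$ and $z \in \mathcal{L}_s \cap \chi(r_0)$, and unwind the definition of $\CFD_{r_0}^s(z)$: it is a sum of angles $\widehat{w 0 w'}$ over the finitely many edges $[w, A(w)]^*$ of $\pi(z)$ whose portion between levels $r_0$ and $s$ is nontrivial, plus two boundary terms for the partial first and last edges. Since $s - r_0 \le \delta$ and the radial coordinate is monotone along each $[w,A(w)]^*$, each such edge has both endpoints at hyperbolic distance at most $\delta$ apart in the radial direction.

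The crucial geometric input is the good-point condition. If an edge $[w, A(w)]^*$ lies in $\chi(r_0)$ and crosses the annulus, then $A(w) \in B^+(w)$ and, more importantly, $B^+(w) \cap \mathcal{N} = \emptyset$ by definition of the parent. Because $w$ projects radially onto a good point $z_0 \in S(r_0)$ whose cone $\Cone(z_0, Ae^{-r_0})$ is in $\hat\chi(r_0)$, the set $\Psi_2(r_0, z_0) = \Cone(z_0, Ae^{-r_0}) \cap (B(r_0) \setminus B(r_0-1))$ contains at least one Poisson point $p$. That point $p$ is a legitimate competitor for being $A(w)$ (it is closer to the origin than $w$), so $d(w, A(w)) \le d(w, p)$. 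Using the hyperbolic law of cosines \eqref{E:cosinelaw}, one estimates $d(w,p)$: both $w$ and $p$ sit in a cone of aperture $O(Ae^{-r_0})$ with radial coordinates within $O(1 + \delta)$ of $r_0$, and a short computation with $\cosh$ and $\sinh$ shows $d(w,p) \le C$ for a constant $C = C(A, \delta)$ — the exponential growth of $\sinh(r_0)$ is exactly cancelled by the $e^{-r_0}$ aperture. Hence $d(w, A(w)) \le C$, which bounds the edge length uniformly. Now invert the law of cosines the other way: for two points at radial coordinates $\approx r_0$ and at hyperbolic distance $\le C$, the angle $\widehat{w 0 A(w)}$ they subtend at $0$ satisfies $1 - \cos\widehat{w 0 A(w)} \le C' e^{-2r_0}$, i.e. $\widehat{w 0 A(w)} \le C'' e^{-r_0}$.

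It remains to control the number of edges contributing to $\CFD_{r_0}^s(z)$. Since the radial coordinate is monotone along $\pi(z)$ and increases by at least some positive amount at each step — more precisely, the edge length $d(w, A(w))$ is bounded above by $C$ as just shown, and an annulus of width $\delta$ intersected with a cone of aperture $Ae^{-r_0}$ around a good point has hyperbolic diameter $O(\delta)$, so in fact at most a bounded number (depending only on $\delta$ and $A$, not on $r_0$) of consecutive vertices of an infinite backward path can lie between levels $r_0$ and $r_0+\delta$ while staying in $\chi(r_0)$ — the sum defining $\CFD_{r_0}^s(z)$ has at most $N = N(\delta, A)$ terms, each $\le C'' e^{-r_0}$, and the two boundary terms are likewise $\le C'' e^{-r_0}$. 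Therefore $\CFD_{r_0}^s(z) \le (N+2) C'' e^{-r_0} =: C_{\geom} e^{-r_0}$, with $C_{\geom}$ depending only on $d$ (through $\delta$ and $A$, which are fixed once and for all). The main obstacle I anticipate is the bookkeeping in the last step: one must verify carefully that a trajectory, once it enters $\chi(r_0)$ at level $\ge r_0$, cannot oscillate in and out in a way that produces unboundedly many short edges — this is where the radial monotonicity of the $[\cdot,\cdot]^*$ paths and the uniform lower bound on radial progress per edge (coming from the empty-ball property $B^+(w) \cap \mathcal{N} = \emptyset$ forcing $d(w, A(w))$ to be bounded \emph{below} away from $0$ with high probability, or rather from a deterministic packing argument in the thin cone) do the work.
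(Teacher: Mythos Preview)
Your approach has a genuine gap in the final step, and it stems from not using half of the good-point event. Recall that $G(r_0,z_0)$ consists of two conditions: $\Psi_2(r_0,z_0)$ contains a Poisson point (which you do use, to produce a competitor $p$), and $\Psi_1(r_0,z_0)=\Cone(z_0,3Ae^{-r_0})\cap(B(r_0+\delta)\setminus B(r_0))$ is \emph{empty} of Poisson points. You never invoke the latter, and it is precisely what controls the number of edges.

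Your proposed bound on the number $N$ of edges does not hold deterministically. There is no deterministic lower bound on $d(w,A(w))$, nor on the radial progress $d(0,w)-d(0,A(w))$: Poisson points can be arbitrarily close to one another, so a trajectory could in principle have arbitrarily many short edges inside the annulus. A packing argument does not apply because Poisson points have no hard-core exclusion. Since the lemma asserts a deterministic inequality, a ``with high probability'' bound is not sufficient either. Moreover, your competitor argument for bounding $d(w,A(w))$ requires $w$ itself to lie in $\chi(r_0)$, but only the starting point $z$ is assumed to be there; subsequent vertices of the trajectory need not be.

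The paper's argument sidesteps all of this by combining the two halves of $G$ as follows. Using the competitor $z'\in\Psi_2$, one shows by contradiction (via the law of cosines, comparing $d(z_\downarrow,z')$ with the distance from $z_\downarrow$ to the point on the edge at angular offset $3Ae^{-r_0}$) that the single edge $[z_\downarrow,z_\uparrow]^*$ cannot swing through an angle exceeding $3Ae^{-r_0}$ before either reaching level $r_0$ or terminating at $z_\uparrow$. If it reaches level $r_0$, we are done with $\CFD_{r_0}^s(z)=\widehat{z\,0\,\mathcal{A}_{r_0}^s(z)}\le 3Ae^{-r_0}$ since $z_\downarrow=(\mathcal{A}_{r_0}^s(z))_\downarrow$. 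If instead it terminates at $z_\uparrow$ with $d(0,z_\uparrow)\ge r_0$, then $z_\uparrow$ would be a Poisson point lying in $\Cone(z,3Ae^{-r_0})\cap(B(r_0+\delta)\setminus B(r_0))\subset\Psi_1$, contradicting $\Psi_1\cap\mathcal{N}=\emptyset$. Hence the trajectory from $z$ to level $r_0$ is a portion of a \emph{single} edge, and $C_{\geom}=3A$ works. The emptiness of $\Psi_1$ is what replaces your attempted edge-count bound.
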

}
For any $r_0 >0$, we cover the sphere $S(r_0)$ with balls of angular radius $e^{-r_0}$ such that the number of balls that overlap at a given point never exceeds some constant $K$, and such that the number of balls intersecting $B_{S(r_0)}(z,Ae^{-r_0})$ (recall \eqref{E:defBSR}) is upper-bounded by $C_{\ball}A^d$. To proceed, we use the following lemma, proved in Section \ref{S:proofoflemmas}:
\begin{lemma}
\label{Lemma:covering}
There exists $K=K(d,p) \in \mathbb{N}^*$ such that, for any $r_0 >0$, there exist a non-negative integer $N(r_0) \ge 0$ and a family of points $z_1,...,z_{N(r_0)} \in S(r_0)$ such that:
\begin{itemize}
    \item $\bigcup_{1 \le i \le N(r_0)} B_{S(r_0)}(z_i,e^{-r_0})=S(r_0)$,
    \item $\forall z \in S(r_0)$, $\#\{1 \le i \le N(r_0),~z \in B_{S(r_0)}(z_i,e^{-r_0})\} \le K$.
\end{itemize}
Moreover, there exists $C_{\ball}=C_{\ball}(K,d)>0$, such that for all $r_0 >0,$  and $A\geq 1$, 
\begin{eqnarray*}
\sup_{z \in S(r_0)}\#\{1 \le i \le N(r_0), B_{S(r_0)}(z_i,e^{-r_0}) \cap B_{S(r_0)}(z,Ae^{-r_0}) \neq \emptyset\} \le C_{\ball}A^d.
\end{eqnarray*}
\end{lemma}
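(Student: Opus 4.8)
The plan is to observe that the lemma really depends on $r_0$ only through the scale $\varepsilon := e^{-r_0} \in (0,e^{-2}]$, and to reduce it to a standard packing argument on the round sphere $\mathbb{S}^d$. Writing points of $S(r_0)$ in polar coordinates, the angle $\widehat{(r_0;u)\,0\,(r_0;u')}$ depends only on the directions $u,u' \in \mathbb{S}^d$ and equals their geodesic distance for the round metric of $\mathbb{S}^d$; hence the identification $(r_0;u) \mapsto u$ carries $B_{S(r_0)}((r_0;u),\rho)$ onto the closed spherical cap $C_\rho(u) := \{u' \in \mathbb{S}^d : \widehat{(r_0;u)\,0\,(r_0;u')} \le \rho\}$ of geodesic radius $\rho$, independently of $r_0$. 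So it suffices to produce, for each $\varepsilon \in (0,e^{-2}]$, points $u_1,\dots,u_N \in \mathbb{S}^d$ with $\bigcup_i C_\varepsilon(u_i) = \mathbb{S}^d$, with multiplicity bounded by some $K = K(d)$, and such that $C_{A\varepsilon}(u)$ meets at most $C_{\ball}(d)\,A^d$ of them for every $u \in \mathbb{S}^d$ and every $A \ge 1$; one then sets $z_i := (r_0;u_i)$.

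The main tool will be the elementary two-sided volume bound for spherical caps: there are constants $0 < c_d \le C_d < \infty$ depending only on $d$ with $c_d \min(\rho,1)^d \le \nu(C_\rho(u)) \le C_d\,\rho^d$ for all $u \in \mathbb{S}^d$ and all $\rho > 0$ (for the upper bound when $\rho \ge \pi$ this is trivial after enlarging $C_d$, since such caps equal $\mathbb{S}^d$; the hypothesis $r_0 \ge 2$ just ensures $\varepsilon \le e^{-2}$, so that $\varepsilon$ and its fixed multiples $\varepsilon/2$, $3\varepsilon/2$ lie in the range where the estimate is cleanest, the large dilates $3A\varepsilon$ appearing below being handled by the crude global upper bound). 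Given this, I would take $\{u_1,\dots,u_N\}$ to be a maximal $\varepsilon$-separated subset of $\mathbb{S}^d$ (pairwise geodesic distances $\ge \varepsilon$, maximal for this property; finite since $\mathbb{S}^d$ is compact). Maximality forces every point of $\mathbb{S}^d$ to lie within $\varepsilon$ of some $u_i$, which gives the covering property; and since the caps $C_{\varepsilon/2}(u_i)$ have pairwise disjoint interiors, summing volumes yields $N\,c_d(\varepsilon/2)^d \le \nu(\mathbb{S}^d)$, hence $N \le C'_d\,\varepsilon^{-d}$.

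For the multiplicity bound I would fix $u \in \mathbb{S}^d$ and consider $I := \{i : u \in C_\varepsilon(u_i)\}$: by the triangle inequality $C_{\varepsilon/2}(u_i) \subset C_{3\varepsilon/2}(u)$ for every $i \in I$, and disjointness of interiors gives $|I|\,c_d(\varepsilon/2)^d \le C_d(3\varepsilon/2)^d$, whence $|I| \le 3^d C_d/c_d =: K$, a quantity depending only on $d$. For the last assertion I would fix $u$ and $A \ge 1$ and set $J := \{i : C_\varepsilon(u_i) \cap C_{A\varepsilon}(u) \ne \emptyset\}$: each such $u_i$ lies within $(A+1)\varepsilon \le 2A\varepsilon$ of $u$, so $C_{\varepsilon/2}(u_i) \subset C_{3A\varepsilon}(u)$ (using $\varepsilon/2 \le A\varepsilon$), and disjointness of interiors gives $|J|\,c_d(\varepsilon/2)^d \le \nu(C_{3A\varepsilon}(u)) \le C_d(3A\varepsilon)^d$, i.e. $|J| \le 6^d C_d/c_d \cdot A^d =: C_{\ball}A^d$.

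I do not expect any genuine obstacle: this is a routine packing/covering estimate in a doubling metric space, and all the constants depend only on $d$ (so the dependence of $K$ on $p$ announced in the statement is spurious). The only point needing a little care is the degenerate regime where $A\varepsilon$ is comparable to or exceeds the diameter $\pi$ of $\mathbb{S}^d$, so that $C_{3A\varepsilon}(u)$ is the whole sphere; this is absorbed into the global bound $\nu(C_\rho(u)) \le C_d\rho^d$, valid for all $\rho > 0$ by the choice of $C_d$, or alternatively dispatched directly by $|J| \le N \le C'_d\varepsilon^{-d} \le C''_d A^d$ since $A \gtrsim \varepsilon^{-1}$ in that regime.
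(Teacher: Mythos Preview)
Your proposal is correct and follows essentially the same route as the paper: both reduce the statement to a covering of $\mathbb{S}^d$ by caps of radius $\varepsilon=e^{-r_0}$ via polar coordinates, and both deduce the $C_{\ball}$ bound from a volume-ratio argument (caps of radius $\sim\varepsilon$ packed into a cap of radius $\sim A\varepsilon$). The paper simply declares the bounded-multiplicity covering a ``standard fact'' and, for the second part, uses the multiplicity constant $K$ directly in the volume count rather than the disjoint half-caps you use; your maximal $\varepsilon$-separated set makes the ``standard fact'' explicit, and your observation that $K$ does not actually depend on $p$ is correct.
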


For $1 \le i \le N(r_0)$, $z_i$ is said to be \emph{inhibited} if the ball $B_{S(r_0)}(z_i,e^{-r_0})$ intersects $S(r_0) \backslash \hat\chi \dave{(r_0)}$, and the corresponding event is denoted by $\Activ(i)$. Let $\Psi(r_0) \subset \llbracket 1,N(r_0) \rrbracket$ be the union of all inhibited spherical caps:
\begin{eqnarray*}
\Psi(r_0):=\bigcup_{\substack{1 \le i \le N(r_0),\\B_{S(r_0)}(z_i,e^{-r_0}) \cap \left( S(r_0) \backslash \hat\chi \dave{(r_0)}\right) \neq\emptyset}}B_{S(r_0)}(z_i,e^{-r_0}).
\end{eqnarray*}
The region $\Psi\dave{(r_0)}$ is the \emph{augmented uncontrolled region}, that contains $\left(\mathbb{H}^{d+1} \backslash \{0\}\right) \backslash \chi\dave{(r_0)}$. For $z \in S(r_0)$, let $\hat{\Cl}(z)$ be the cluster of $z$ in $\Psi(r_0)$ and let us also define (recall that $]0,z)$ is the semi-geodesic starting at $0$ and containing $z$, without $0$):
\begin{eqnarray*}
\Cl(z)=\bigcup_{\substack{z \in \hat{\Cl}(z), \\ z=(r_0 ; u)}} ]0,\dave{(r_0+\delta ; u)}) \subset \mathbb{H}^{d+1} \backslash \{0\}.
\end{eqnarray*}
We define the \emph{angular radius} of $\Cl(z)$ as 
\begin{equation}\label{eq:def_Rad(z)}
\Rad(z):=\sup_{z' \in \Cl(z)} \widehat{z'0z}.
\end{equation}

The next lemma, proved  in Section \ref{S:proofoflemmas}, asserts that the connected components of the augmented uncontrolled region $\Psi$ are small (the radius admits exponential tail decay):
\begin{lemma}
\label{Lemma:expdeca}
There exist $\delta>0$ small enough, $A>0$ large enough and some constant $c_{\decay}>0$ such that, for any $B>0$ large enough and $r_0 >0$,
\begin{eqnarray*}
\sup_{z\in S(r_0)}\mathbb{P}\left[ e^{r_0}\Rad(z)>B\right] \le e^{-c_{\decay}B}.
\end{eqnarray*}
\end{lemma}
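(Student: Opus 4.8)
\textbf{Proof proposal for Lemma \ref{Lemma:expdeca}.}

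The plan is to realise the augmented uncontrolled region $\Psi(r_0)$ as a sub-critical site-percolation-type cluster on the covering balls, where each ball $B_{S(r_0)}(z_i,e^{-r_0})$ is ``inhibited'' only if $\Activ(i)$ occurs, and then to bound $\Rad(z)$ by a classical contour/greedy-path argument. First I would observe that, by the definition of a good point and Lemma \ref{Lemma:covering}, the event $\Activ(i)$ is a \emph{local} event: whether $B_{S(r_0)}(z_i,e^{-r_0})$ meets $S(r_0)\backslash\hat\chi$ depends only on the restriction of $\mathcal N$ to a bounded neighbourhood of $z_i$ — namely to $\bigcup_{z}\big(\Psi_1(r_0,z)\cup\Psi_2(r_0,z)\big)$ over $z$ in a ball of angular radius $O(Ae^{-r_0})$ around $z_i$ — so $\Activ(i)$ is measurable with respect to $\mathcal N$ restricted to a region that, after the exponential dilation $u\mapsto e^{r_0}u$ in normal coordinates on $S(r_0)$, has bounded diameter. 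In particular, $\Activ(i)$ and $\Activ(j)$ are independent as soon as $z_i$ and $z_j$ are at angular distance $\ge M e^{-r_0}$ for a fixed constant $M=M(A,\delta)$; this gives a finite-range-dependent site percolation on the index set $\llbracket 1,N(r_0)\rrbracket$ with bounded degree $D=D(M,d,K)$ in the adjacency graph ``$B_{S(r_0)}(z_i,e^{-r_0})\cap B_{S(r_0)}(z_j,e^{-r_0})\neq\emptyset$''.

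Next I would show that, for $\delta$ small and $A$ large, the probability $\mathbb P[\Activ(i)]$ is as small as we like, uniformly in $r_0\ge 2$ and $i$. A point $z\in S(r_0)$ fails to be good when either $\mathcal N\cap\Psi_1(r_0,z)\neq\emptyset$ or $\mathcal N\cap\Psi_2(r_0,z)=\emptyset$; using the polar volume formula \eqref{E:dvolpolar} one computes $\Vol(\Psi_1(r_0,z))\asymp (Ae^{-r_0})^d\cdot e^{dr_0}\cdot\delta\asymp A^d\delta$ and $\Vol(\Psi_2(r_0,z))\asymp A^d$ (both uniformly in $r_0\ge2$, since $\sinh r\asymp e^r$), so $\mathbb P[\mathcal N\cap\Psi_1\neq\emptyset]\le \lambda\Vol(\Psi_1)\le C A^d\delta$ and $\mathbb P[\mathcal N\cap\Psi_2=\emptyset]=e^{-\lambda\Vol(\Psi_2)}\le e^{-cA^d}$. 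Summing over the $O(A^d)$ balls of the covering that can make $B_{S(r_0)}(z_i,e^{-r_0})$ inhibited (Lemma \ref{Lemma:covering}), $\mathbb P[\Activ(i)]\le C A^{2d}\delta+C A^d e^{-cA^d}$, which can be made smaller than any prescribed $\varepsilon$ by first fixing $A$ large and then $\delta$ small.

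Then I would run the standard argument bounding the cluster of $z$ in a finite-range, small-marginal site percolation. If $e^{r_0}\Rad(z)>B$, then the inhibited cluster $\hat{\Cl}(z)$ contains a chain of $\ge cB$ distinct covering balls (each has angular radius $e^{-r_0}$ and consecutive ones overlap, so a cluster of angular radius $>Be^{-r_0}$ needs at least $\lfloor B\rfloor$ of them); hence there is a self-avoiding path of length $\ell\ge cB$ in the adjacency graph, all of whose vertices are inhibited. By the finite-range dependence we may extract from any such path a sub-path of length $\ge \ell/D'$ whose $\Activ(\cdot)$ events are mutually independent, for a constant $D'$ depending only on $D$ and $M$. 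A union bound over the at most $D^{\ell}$ self-avoiding paths of length $\ell$ from $z$ then gives $\mathbb P[e^{r_0}\Rad(z)>B]\le \sum_{\ell\ge cB} D^{\ell}\,\varepsilon^{\,\ell/D'}$, and choosing $\varepsilon$ so that $D\varepsilon^{1/D'}<1/2$ makes this a convergent geometric series bounded by $e^{-c_{\decay}B}$; crucially all constants are uniform in $r_0$ because after the dilation the geometry of $S(r_0)$ is uniformly comparable (the factor $\sinh r\asymp e^r$ absorbs the curvature). The main obstacle is the uniformity in $r_0$: one must check carefully that the combinatorial constants ($K$, $C_{\ball}$, the range $M$, the degree $D$) coming from the covering and from the independence structure do not degrade as $r_0\to\infty$, which is exactly what Lemma \ref{Lemma:covering} and the scaling $\sinh r\asymp e^r$ are there to guarantee; a secondary technical point is handling the boundary effect at the inner radius $r_0-1$ in the definition of $\Psi_2$, which forces the restriction $r_0\ge2$ and otherwise plays no role.
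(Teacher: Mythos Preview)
Your proposal is correct and follows essentially the same route as the paper: bound $\mathbb P[\Activ(i)]$ via the volume estimates for $\Psi_1,\Psi_2$ (first take $A$ large, then $\delta$ small), then run a path-counting percolation argument exploiting that $\Activ(i)$ and $\Activ(j)$ are independent once $z_i,z_j$ are sufficiently far apart in angle. The only cosmetic difference is that the paper builds independence directly into its paths---it enumerates sequences $z_{i_0},\dots,z_{i_k}$ with all pairs at angular distance $\ge (6A+2)e^{-r_0}$ and consecutive pairs at distance $\le(6A+4)e^{-r_0}$, so the $\Activ(i_j)$ are automatically independent---rather than invoking finite-range dependence and extracting an independent sub-path as you do.
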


In addition, we need a control of the number of points in a given region of a sphere $S(r_0)$, which is given by the next lemma, proved in Section \ref{S:proofoflemmas}:
\begin{lemma}
\label{Lemma:numberofpoints}
For any $p \ge 1$, there exists a constant $C=C(d,p)>0$ such that, for any $r_0 > 0$,
\begin{eqnarray*}
\sup_{z \in S(r_0)}\mathbb{E}\left[ \#\left(\mathcal{L}_{r_0} \cap B_{S(r_0)}(z,e^{-r_0})\right)^p \right] \le C.
\end{eqnarray*}
\end{lemma}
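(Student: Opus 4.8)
The plan is to estimate the $p$-th moment of the number of points of $\mathcal{L}_{r_0}$ falling in a fixed angular ball $B_{S(r_0)}(z,e^{-r_0})$ by exploiting that the RST has finite degree and that the underlying point process is Poisson. I would first observe that every point $w \in \mathcal{L}_{r_0} \cap B_{S(r_0)}(z,e^{-r_0})$ is a point of some edge $[w',A(w')]^*$ with $w' \in \mathcal{N}$; since along $[w',A(w')]^*$ the distance to $0$ is monotone, the level-$r_0$ crossing is unique per edge, so $\#(\mathcal{L}_{r_0} \cap B_{S(r_0)}(z,e^{-r_0}))$ is at most the number of edges whose $r_0$-crossing lies in that ball. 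An edge $[w',A(w')]^*$ crosses the ball only if $w'$ and $A(w')$ lie on opposite sides of $S(r_0)$ and the geodesic-type path between them stays angularly close to $z$; by the monotonicity and isotropy properties (i),(ii) used to define $[\cdot,\cdot]^*$, this forces $w'$ to belong to an enlarged region $W(z)$ consisting of the union of cones of aperture $O(e^{-r_0})$ through $z$, intersected with $\{r > r_0\}$, together with the condition that $d(w',A(w'))$ is at least the distance from $w'$ to $S(r_0)$ (which shrinks the relevant part of the region fast).

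The key quantitative input is the one already used repeatedly in the paper: by definition of the parent, $B^+(w') \cap \mathcal{N} = \emptyset$, and using the hyperbolic law of cosines $(\ref{E:cosinelaw})$ one checks that for $w'$ at distance $\rho > r_0$ from $0$ lying within angle $O(e^{-r_0})$ of $z$, either $d(w',A(w')) \le $ something small, so $w'$ is within bounded hyperbolic distance of $S(r_0)$ and lies in a region of bounded hyperbolic volume, or $d(w',A(w'))$ is large and then the empty ball $B^+(w')$ has large hyperbolic volume, whose presence is exponentially unlikely. So I would split the count as $X = X_{\mathrm{close}} + X_{\mathrm{far}}$, where $X_{\mathrm{close}}$ counts contributing $w' \in \mathcal{N}$ within bounded hyperbolic distance of the ball and $X_{\mathrm{far}}$ the rest. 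For $X_{\mathrm{close}}$: the relevant region has hyperbolic volume bounded by a constant uniformly in $r_0$ (because angular aperture $e^{-r_0}$ on a sphere of radius $\asymp e^{r_0}$ gives bounded cross-section, and we only extend a bounded hyperbolic distance outward), so $X_{\mathrm{close}}$ is dominated by a Poisson random variable of bounded parameter, whence all its moments are bounded by constants $C(d,p)$. For $X_{\mathrm{far}}$: each contributing point carries an empty ball of hyperbolic volume growing with $d(w',A(w'))$, and a Campbell/Mecke computation together with the exponential volume growth $(\ref{E:volball})$ shows $\mathbb{E}[X_{\mathrm{far}}^p] \le C$ as well — one integrates $\lambda^k \prod e^{-\lambda \Vol(B^+)}$ over $k$-tuples and the exponential suppression beats the polynomial factors.

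Concretely the order of steps would be: (1) reduce the count of level-$r_0$ points in the ball to a count of edges, hence of Poisson points $w'$ in a geometric region $W(z)$ with the "empty $B^+$'' constraint; (2) describe $W(z)$ via $(\ref{E:cosinelaw})$ and show its "bounded'' part has hyperbolic volume $O(1)$ uniformly in $r_0$; (3) bound $\mathbb{E}[X_{\mathrm{close}}^p]$ using that $X_{\mathrm{close}}$ is stochastically dominated by $\mathrm{Poisson}(O(1))$, so every moment is $O(1)$; (4) bound $\mathbb{E}[X_{\mathrm{far}}^p]$ by a multivariate Mecke formula, using $\Vol(B^+(w')) \gtrsim e^{c\, d(w',A(w'))}$ on the far region to get an exponentially small factor that absorbs all combinatorial and polynomial terms; (5) combine via $\mathbb{E}[(X_{\mathrm{close}}+X_{\mathrm{far}})^p] \le 2^{p-1}(\mathbb{E}[X_{\mathrm{close}}^p] + \mathbb{E}[X_{\mathrm{far}}^p])$. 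The main obstacle I expect is Step 2 — getting a clean, uniform-in-$r_0$ description of which Poisson points $w'$ can have their edge cross $B_{S(r_0)}(z,e^{-r_0})$, and showing the associated region genuinely has bounded hyperbolic volume; the subtlety is that an edge $[w',A(w')]^*$ may come from a $w'$ whose own direction is far from $z$ but whose parent is close, so one must use monotonicity property (ii) of the $*$-paths carefully to control the angular excursion, and this is exactly where the hyperbolic geometry (angular deviations decaying like $e^{-r}$) does the work.
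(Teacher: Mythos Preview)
Your overall strategy matches the paper's: count edges crossing the cap, split the contributing Poisson points $w'$ into ``close'' and ``far'', dominate the close count by a Poisson variable of bounded mean, and kill the far count with the empty-ball constraint $B^+(w')\cap\mathcal{N}=\emptyset$. There is, however, a slip in your Step~2: the claim that a crossing edge forces $w'$ into a cone of aperture $O(e^{-r_0})$ around $z$ is not correct---the angular coordinate along $[w',A(w')]^*$ interpolates between the directions of $w'$ and $A(w')$, so $w'$ itself may sit angularly far from $z$. Fortunately you do not need this. If $z^*$ is the crossing point, property~(ii) of the $*$-paths gives $d(w',A(w'))\ge d(w',z^*)\ge d(w',z)-d(z,z^*)$, and a one-line law-of-cosines check shows the cap $B_{S(r_0)}(z,e^{-r_0})$ has hyperbolic diameter bounded by an absolute constant $C_{\mathrm{dist}}$ independent of $r_0$. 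Hence either $w'\in B(z,h)$---a ball of bounded volume for any fixed $h$, no angular analysis required---or $B^+(w',\,d(w',z)-C_{\mathrm{dist}})$ is empty of Poisson points. This is exactly the dichotomy the paper uses, and it dissolves the obstacle you flagged.

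The paper then organizes the conclusion a bit differently from your Steps~3--5. Rather than fixing $h$ and bounding $\mathbb{E}[X_{\mathrm{far}}^p]$ by a multivariate Mecke computation (where the empty-ball events for distinct $w'_i$ are not independent, so your ``$\prod e^{-\lambda\,\mathrm{Vol}(B^+)}$'' needs a small repair, e.g.\ via $\mathrm{Vol}\bigl(\bigcup_i B^+_i\bigr)\ge\max_i\mathrm{Vol}(B^+_i)\ge\tfrac{1}{k}\sum_i\mathrm{Vol}(B^+_i)$), the paper lets $h=h(M)$ grow with the tail level $M$ and bounds $\mathbb{P}[\#L\ge M]\le\mathbb{P}[\#L_{\le h}\ge M]+\mathbb{P}[L_{>h}\ne\emptyset]$, handling the first term by Chernoff for a Poisson of mean $\asymp M$ and the second by a \emph{first-moment} Campbell bound only. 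This yields a stretched-exponential tail for $\#L$ and sidesteps the $k$-point correlation entirely. Both routes are valid; the paper's is marginally lighter, while yours gives the moment bound more directly once the factorization is patched.
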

Let us choose $A,\delta>0$ as in Lemma \ref{Lemma:expdeca} and $C_{\geom}$ as in Lemma \ref{Lemma:geometriccontrol}.
\dave{The proof of Proposition \ref{Prop:controlannulus} is now divided into steps. First, we will prove in Section \ref{sec:step1} an estimate on the maximal backward deviations in the control region, between the radii $r_0$ and $r_0+\delta$ using Lemma \ref{Lemma:geometriccontrol}. This upper bound involves the number of points $M(z)$ of a random region of the annulus $B(r_0+\delta) \backslash B(r_0)$ (see \eqref{E:defineM} below). The moments of this random variable are controlled in Section \ref{sec:Step2}, using in particular the result of Lemma \ref{Lemma:expdeca}. In the computation, Lemma \ref{Lemma:numberofpoints} will be useful. This allows us to conclude.}

\subsection{Step 1: an almost-sure upper-bound of $\MBD_{r_0}^{r_0+\delta}(\cdot)$}\label{sec:step1}

For $z \in S(r_0)$, let us define
\begin{eqnarray}
\label{E:defineM}
M(z) := \# \Big( \mathcal{N} \cap \Cl(z) \cap \tvc{(B(r_0+\delta) \backslash B(r_0)) \Big) ~.}
\end{eqnarray}
The quantity $M(z)$ is the number of Poisson points inside a random part of the thin annulus $B(r_0+\delta) \backslash B(r_0)$, whose diameter admits exponential tail decay (Lemma \ref{Lemma:expdeca}).

This step is devoted to the proof of the following upper-bound: 
\begin{lemma}
Almost surely, for any $z \in S(r_0)$ and $p\geq 1$,
\begin{eqnarray}
\label{E:majorMBD}
\MBD_{r_0}^{r_0+\delta}(z)^p \le 2^{p-1}\big(2^p \Rad(z)^p(M(z)+1)^p+C_{\geom}^pe^{-pr_0}\big).
\end{eqnarray}
\end{lemma}

\begin{proof}
Recall that $\MBD_{r_0}^{r_0+\delta}(z)$ takes into account backward paths from $z$ that end (in the backward direction) before level $r_0+\delta$ and backward paths reaching level $r_0+\delta$. For $z \in S(r_0)$, we define $\Stop(z)$ as the set of ending points of backward paths from $z$ stopping before level $r_0+\delta$:\tvc{
\begin{eqnarray}
\Stop(z):=\{z'=(r';u') \in \mathcal{N} \cap \mathcal{D}_{r_0}^{r'}(z),~r_0 \le r'<r_0+\delta,~A^{(-1)}(z')=\emptyset\}.\label{eq:def_Stop}
\end{eqnarray}}
For any $z' \in \mathcal{D}_{r_0}^{r_0+\delta}(z) \cup \Stop(z)$, one the two following cases occur. Either the branch from $z'$ to $z$ stays inside $\Cl(z)$, or it crosses $\chi(r_0)$. Let us define $\mathcal{C}$ (resp. $\mathcal{C}'$) as the set of couples $(z,z')$ such that the branch from $z'$ to $z$ crosses $\chi(r_0)$ (resp. does not cross $\chi(r_0)$):
\begin{eqnarray*}
\mathcal{C}:=\{(z,z'),~z \in S(r_0),~z' =(r',u') \in \mathcal{D}_{r_0}^{r_0+\delta}(z) \cup \Stop(z),~\exists s \in [r_0,r'],~\mathcal{A}_s^{r'}(z') \in \chi(r_0)\}, \\ 
\mathcal{C}':=\{(z,z'),~z \in S(r_0),~z'=(r',u')  \in \mathcal{D}_{r_0}^{r_0+\delta}(z) \cup \Stop(z),~\forall s \in [r_0,r'],~\mathcal{A}_s^{r'}(z') \notin \chi(r_0)\}.
\end{eqnarray*}
Moreover, for any $(z,z') \in \mathcal{C}$, we define $\hit(z,z')$ as the highest level where the branch from $z'=(r',u')$ to $z$ hits $\chi(r_0)$:
\begin{eqnarray*}
\hit(z,z'):=\sup \{s \in [r_0,r_0+\delta],~\mathcal{A}_s^{r'}(z') \in \chi(r_0)\}.
\end{eqnarray*}
\tvc{By definition, for $(z,z') \in \mathcal{C}$, the set on which the supremum is non empty and bounded by $r_0+\delta$, so the supremum is well defined and smaller than $r_0+\delta$.}\\

\tvc{Let $1 \le i \le N(r_0)$. Let $z \in B_{S(r_0)}(z_i,e^{-r_0})$. Using the above notations and Definition \ref{def:defmbd}, we recall that:
\[\MBD_{r_0}^{r_0+\delta}(z)=\max_{z'=(r',u')\in \mathcal{C} \cup \mathcal{C}' } \CFD_{r_0}^{r'}(z').\]
We will now establish that for every $z'=(r';u') \in \mathcal{D}_{r_0}^{r_0+\delta}(z) \cup \Stop(z)$, $\CFD_{r_0}^{r'}(z')$ is upper bounded by the right hand side of \eqref{E:majorMBD}. Once this is proved, \eqref{E:majorMBD} follows as the upper bound does not depend on $z'$.}\\

Let us first consider the case where $(z,z') \in \mathcal{C}'$ (the branch between $z$ and $z'$ does not cross $\chi(r_0)$). Then this branch stays inside $\Cl(z)$. Thus it crosses at most $M(z)$ points of $\mathcal{N}$, then, for $r_0\leq r'\leq r_0+\delta$,
\begin{eqnarray*}
\CFD_{r_0}^{r'}(z') \le 2\Rad(z)(M(z)+1).
\end{eqnarray*}
In the second case where $(z,z') \in \mathcal{C}$, let $z''=(r'';u'')$ be the ancestor of $z'$ at the level $r''=\hit(z,z')$. Let $p \ge 1$. Then, 
\begin{eqnarray}
\CFD_{r_0}^{r'}(z')^p \le \left(\CFD_{r_0}^{r''}(z'')+\CFD_{r''}^{r_0+\delta}(z')\right)^p \le 2^{p-1}\left(\CFD_{r_0}^{r''}(z'')^p+\CFD_{r''}^{r_0+\delta}(z')^p\right).\label{eq:etape11}
\end{eqnarray}
By the same argument as in the previous case, $\CFD_{r_0}^{r''}(z'') \le 2\Rad(z)(M(z)+1)$,
and, by Lemma \ref{Lemma:geometriccontrol}, $\CFD_{r''}^{r_0+\delta}(z') \le C_{\geom}e^{-r_0}$, since, by definition of $z''$, the part of trajectory between $z''$ and $z$ is included in $\Cl(z)$. Thus,
\begin{eqnarray}
\label{E:majorCFD}
\CFD_{r_0}^{r'}(z')^p \le 2^{p-1}\left(2^p \Rad(z)^p(M(z)+1)^p+C_{\geom}^pe^{-pr_0}\right).
\end{eqnarray}
The upper-bounds \eqref{eq:etape11} and (\ref{E:majorCFD}) hold whatever $(z,z')$ belongs to $\mathcal{C}$ or $\mathcal{C}'$, and does not depend on $z'$. It follows that (\ref{E:majorMBD}) holds for any $z \in S(r_0)$.\end{proof}

\subsection{Step 2: a control of the tail decay of $M(z)$}\label{sec:Step2}

Recall that, for $z =(r;u)\in S(r_0)$, $M(z)$ is defined in (\ref{E:defineM}). In this step, it is shown that:

\begin{lemma}For any $z \in S(r_0)$, and for $p\geq 1$,
\begin{eqnarray}
\label{E:boundCseconde}
\mathbb{E}\left[M(z)^{4p}\right] \le C
\end{eqnarray}
for some $C=C(p)>0$.\end{lemma}

\begin{proof}
Let $z \in S(r_0)$. For given $R \ge 0$, let us define
\begin{eqnarray*}
\Reg(R):=\Cone(z_i,Re^{-r_0}) \cap \big(\dave{B}(r_0+\delta)\backslash \dave{B}(r_0)\big).
\end{eqnarray*}
For any $R,m \ge 0$,
\begin{eqnarray*}
\{M(z)>m\} \subset \{\Rad(z)>Re^{-r_0}\} \cup \{\#\left(\mathcal{N} \cap \Reg(R) \right)>m\}
\end{eqnarray*}
thus
\begin{eqnarray}
\label{E:boundprobasum}
\mathbb{P}\left[M(z)>m\right] \le \mathbb{P}\left[ \Rad(z)>Re^{-r_0} \right]+\mathbb{P}\left[\#\left(\mathcal{N} \cap \Reg(R)\right)>m\right].
\end{eqnarray}
By Lemma \ref{Lemma:expdeca}, $\mathbb{P}\left[ \Rad(z)>Re^{-r_0} \right] \le e^{-c_{\decay}R}$. The random variable $\#\left(\mathcal{N} \cap \Reg(R))\right)$ is distributed according to the Poisson law with parameter $\lambda \Vol(\Reg(R))$. Recall that $\nu$ the $d$-dimensional volume measure on $\mathbb{S}^d$, and that $u$ is the direction of $z$. Then:
\begin{eqnarray}
\lambda \dave{\Vol}(\Reg(R)) = \lambda \nu(\{u'\ : \ \widehat{u0u'} < Re^{-r_0}\})\Vol(\dave{B}(r_0+\delta)\backslash \dave{B}(r_0)) \le CR^d.
\end{eqnarray}
for some constant $C>0$ independent of $r_0,R$, since $\Vol(\dave{B}(r_0+\delta)) =O\left(e^{dr_0}\right)$ by (\ref{E:volball}). Thus $\#(\mathcal{N} \cap \Reg(R)) \preceq_{st} \mathcal{P}(CR^d)$. By Chernoff bound for the Poisson distribution \cite{boundPoisson},
\begin{eqnarray}
\mathbb{P}\left[ \#(\mathcal{N} \cap \Reg(R)) \ge m\right] \le \frac{e^{-CR^d}(CR^d e^1)^m}{m^m}
\end{eqnarray}
for any $m \ge CR^d$. Let us chose $R=(m/(2eC))^{1/d}$ (thus $m=2eCR^d$). It leads to:
\begin{eqnarray}
\label{E:finalboundReg}
\mathbb{P}\left[ \#(\mathcal{N} \cap \Reg(R)) \ge m\right] \le \left(\frac{e^{-1/(2e)}}{2}\right)^m \le \left(\frac{1}{2} \right)^m.
\end{eqnarray}
Finally, we combine (\ref{E:boundprobasum}), Lemma \ref{Lemma:expdeca} and (\ref{E:finalboundReg}) to obtain:
\begin{eqnarray}
\label{E:queuenopcl}
\mathbb{P}\left[\dave{M(z)} > m\right] & \le  & e^{-c_{\decay}R}+\left(\frac{1}{2} \right)^m \nonumber\\
&= & \exp\left(-c_{\decay}(m/2eC)^{1/d}\right)+\left(\frac{1}{2}\right)^m \le \exp(-cm^{1/d})
\end{eqnarray}
for some $C>0$. Therefore:
\begin{eqnarray*}
\mathbb{E}\left[ (\dave{M(z)})^{4p} \right] 
&= & \int_0^\infty \mathbb{P}\left[\dave{M(z)}>m^{1/(4p)}\right]~dm \nonumber\\
&\overset{(\ref{E:queuenopcl})}{\le} & \int_0^\infty \exp(-cm^{1/(4dp)})~dm<\infty,
\end{eqnarray*}
which proves (\ref{E:boundCseconde}).\end{proof}

\subsection{Step 3: conclusion}

\dave{By (\ref{E:majorMBD}), for any $1 \le i \le N(r_0)$,
\begin{multline}
\sum_{z \in B_{S(r_0)}(z_i,e^{-r_0}) \cap \RST} \MBD_{r_0}^{r_0+\delta}(z)^p 
\\
\begin{aligned}
 \le & 2^{p-1} \mathbb{E}\Big[\sum_{z\in B_{S(r_0)}(z_i,e^{-r_0}) \cap \RST}\left(2^p \Rad(z)^p(M(z)+1)^p+C_{\geom}^pe^{-pr_0}\right) \Big]\\
 \leq & I+ II,
\end{aligned}
\end{multline}where 
\begin{align*}
I= & 2^{2p-1}\mathbb{E}\Big[\sum_{z\in B_{S(r_0)}(z_i,e^{-r_0}) \cap \RST}  \Rad(z)^p(M(z)+1)^p \Big]\\
II=& 2^{p-1} C_{\geom}^pe^{-pr_0} \mathbb{E}\big[\# \{\RST\cap B_{S(r_0)}(z_i,e^{-r_0})\}\big]
\end{align*}
By Lemma \ref{Lemma:numberofpoints}, the expectation in $II$ is upperbounded by a constant $C>0$. For the term $I$, notice that by construction of $\Psi(r_0)$, $\Cl(z)=\Cl(z_i)$, $\Rad(z)\leq \Rad(z_i)+e^{-r_0}$ and $M(z)=M(z_i)$. Thus, we have:
\begin{align}
I\leq & 2^{2p-1} \mathbb{E}\Big[ \sum_{z\in B_{S(r_0)}(z_i,e^{-r_0}) \cap \RST}  (e^{-r_0}+\Rad(z_i))^p(M(z_i)+1)^p\Big]\nonumber\\
= & 2^{2p-1} \mathbb{E}\Big[ \#\{\RST \cap B_{S(r_0)}(z_i,e^{-r_0}) \}  (e^{-r_0}+\Rad(z_i))^p(M(z_i)+1)^p\Big]\nonumber\\
\leq & 2^{2p-1} \mathbb{E}\Big[ \#\{\RST \cap B_{S(r_0)}(z_i,e^{-r_0}) \}^2\Big]^{1/2} \mathbb{E}\Big[ (e^{-r_0}+\Rad(z_i))^{2p}(M(z_i)+1)^{2p}\Big]^{1/2}\nonumber\\
\leq & 2^{2p-1}\mathbb{E}\Big[ \#\{\RST \cap B_{S(r_0)}(z_i,e^{-r_0}) \}^2\Big]^{1/2} \mathbb{E}\Big[ (e^{-r_0}+\Rad(z_i))^{4p}\Big]^{1/4} \mathbb{E}\Big[(M(z_i)+1)^{4p}\Big]^{1/4}, \label{E:majorMBDproduct}
\end{align}by using Cauchy-Schwarz' inequality. By Lemma \ref{Lemma:numberofpoints} applied to $p=2$,
\begin{eqnarray}
\label{E:majornbp}
\mathbb{E}\left[ \#\{\RST \cap B_{S(r_0)}(z_i,e^{-r_0})\}^2 \right] \le C,
\end{eqnarray}
for $C$ independent of $r_0$, $z_i$. By Lemma \ref{Lemma:expdeca},
\begin{eqnarray}
\label{E:majorRad}
&\mathbb{E}\left[ \Rad(z_i)^{4p} \right]&\leq e^{-4pr_0}\int_0^\infty \mathbb{P}\left[e^{4pr_0}\Rad(z_i)^{4p}>B \right]~dB \nonumber\\
&&=e^{-4pr_0}\int_0^\infty \mathbb{P}\left[ e^{r_0}\Rad(z_i)>B^\frac{1}{4p} \right]~dB \nonumber\\
&&\le e^{-4pr_0}\int_0^\infty e^{-c_{\decay}B^\frac{1}{4p}}~dB<\infty.
\end{eqnarray}
Then, by combining (\ref{E:majorMBDproduct}), (\ref{E:majornbp}), (\ref{E:majorRad}) and (\ref{E:boundCseconde}), $I\leq C(p)e^{-pr_0}$ and 
\begin{eqnarray}
\label{E:majorunebouleelem}
\mathbb{E}\left[ \sum_{z \in B_{S(r_0)}(z_i,e^{-r_0}) \cap \RST} \MBD_{r_0}^{r_0+\delta}(z)^p\right] \le Ce^{-pr_0},
\end{eqnarray}
for some $C=C(p)>0$.}

The final step is to sum over all $i$ such that $B_{S(r_0)}(z_i,e^{-r_0})$ intersects $B_{S(r_0)}(u,\theta)$ for any given $u \in \mathbb{S}^d$ and $\theta>0$. Let $\theta>0$. By Lemma \ref{Lemma:covering} it can be assumed that $B_{S(r_0)}(z_i,e^{-r_0})$ intersects $B_{S(r_0)}(u,\theta)$ for at most $C_{\ball}e^{dr_0}\theta^d$ values of $i \in \llbracket 1,N(r_0) \rrbracket$. Therefore
\begin{multline}
\mathbb{E}\left[ \sum_{z \in B_{S(r_0)}(u,\theta) \cap \RST} \MBD_{r_0}^{r_0+\delta}(z)^p\right]\\
\begin{aligned}
= & \sum_{\substack{1 \le i \le N(r_0),\\B_{S(r_0)}(z_i,e^{-r_0}) \\ \cap B_{S(r_0)}(u,\theta) \neq \emptyset}} \mathbb{E}\left[ \sum_{z \in B_{S(r_0)}(z_i,e^{-r_0}) \cap \RST} \MBD_{r_0}^{r_0+\delta}(z)^p\right] \\
\overset{(\ref{E:majorunebouleelem})}{\le} & Ce^{-pr_0}\#\{1 \le i \le N(r_0),B_{S(r_0)}(z_i,e^{-r_0})\cap B_{S(r_0)}(u,\theta) \neq \emptyset\} \\
\le & Ce^{(d-p)r_0}\theta^d,
\end{aligned}
\end{multline}
which achieves the proof of Proposition \ref{Prop:controlannulus}.

\subsection{Proof of Lemmas \ref{Lemma:geometriccontrol}, \ref{Lemma:covering}, \ref{Lemma:expdeca}, \ref{Lemma:numberofpoints}}
\label{S:proofoflemmas}

\tvc{
\begin{proof}[Proof of Lemma \ref{Lemma:geometriccontrol}]
Let $r_0 \le s \le r_0+\delta$ and $z =(s;u) \in \chi(r_0) \cap \mathcal{L}(s)$. Recall that the edge of the RST containing $z$ is $[z_\uparrow,z_\downarrow]$, with $z_{\uparrow}=A(z)$. 

We will first prove that the ancestor $z_{\uparrow}$ belongs to $B(r_0)$. 
Let us introduce $z_1=(r_0 ; u)$ and $z_2=(r_0+\delta,u)$, and let $R=\inf\{\rho>0\ : \ B(z_2,\rho)\supset \Psi_2(r_0,z_1)\}$ be the smallest radius such that $\Psi_2(r_0,z_1)$ is included into $B(z_2,R)$.\\
Let us denote by $V:= B(z_2,R)\cap (B(r_0+\delta)\setminus B(r_0))$. For sufficiently large $A$ in the definition of $\Psi_1(r_0,z_1)$ and $\Psi_2(r_0,z_1)$ ($A\geq 2$), this region is included into $\Psi_1(r_0,z_1)$.\\
Since $z\in \chi(r_0)$, $G(r_0,z_1)$ is true: there is no Poisson point in $\Psi_1(r_0,z_1)$ and at least one point, say $z'$ in $\Psi_2(r_0,z_1)$. Hence, the ancestor of $z$ is necessarily in $B(z_2,R)$ and since it can not be in $V$, it is in $B(r_0)$.
Therefore, the radius $r_\uparrow$ of $z_\uparrow$ is smaller than $r_0$, and $z_3:=\mathcal{A}_{r_0}^s(z)$ of radius $r_0$ is well-defined. \\

The points $z_\uparrow$, $z_3$, $z$ and $z_\downarrow$ are all on the same edge of the RST. Our purpose is now to prove that 
\begin{equation}\label{etape2}
\CFD_{r_0}^s(z)=\widehat{z0z_3}\leq A e^{-r_0}.
\end{equation} 
We proceed by contradiction. Assume that $\widehat{z0z_3}>A e^{-r_0}$. On the one hand, since $z$ is `between' $z_3$ and $z_\downarrow$
\begin{align*}
\widehat{z_\downarrow 0 z_3}= & \widehat{z_\downarrow 0z}+\widehat{z0z_3} > \widehat{z_\downarrow 0z}+A e^{-r_0},
\end{align*}by assumption. On the other hand, by construction of the point $z'=(r';u')\in \Psi_2(r_0,z_1)$,
\begin{align*}
\widehat{z_\downarrow 0 z'}\leq & \widehat{z_\downarrow 0z}+\widehat{z_\downarrow 0 z'}\leq \widehat{z_\downarrow 0 z}+A e^{-r_0}.
\end{align*}Hence, 
$\widehat{z_\downarrow 0 z_3}>\widehat{z_\downarrow 0 z'}$ and
\[\big(1-\cos(\widehat{z_\downarrow 0 z'})\big) \sinh(r_\downarrow) \sinh(r')<\big(1-\cos(\widehat{z_\downarrow 0 z_3})\big) \sinh(r_\downarrow) \sinh(r_0),\]
and by the hyperbolic law of cosines, 
$\cosh\big(d(z_\downarrow,z')\big)<\cosh\big(d(z_\downarrow,z_3)\big)$ which implies that $d(z_\downarrow,z') <d(z_\downarrow,z_3)$. Since the distance to $z_\downarrow$ is increasing along the edge $[z_\downarrow,z_\uparrow]$ by the construction \eqref{E:defofstar}, we have that $d(z_\downarrow,z_3)\leq d(z_\downarrow,z_\uparrow)$ and thus,
\[d(z_\downarrow,z') <d(z_\downarrow,z_3)\leq d(z_\downarrow,z_\uparrow).\]
This is contradictory since $z'\in \mathcal{N}$ is a point of lower radius than $z_\uparrow$ which is strictly closer to $z_\downarrow$ than $z_\uparrow$ is: then $z'$ should be the ancestor of $z_\downarrow$ and not $z_\uparrow$! As a consequence, we deduce that \eqref{etape2} holds. This concludes the proof of Lemma \ref{Lemma:geometriccontrol}.\end{proof}}

\bigbreak

\begin{proof}[Proof of Lemma \ref{Lemma:covering}]
Proving the first part of Lemma \ref{Lemma:covering} is equivalent to show that there exists some $K \in \mathbb{N}$ such that, for any $\varepsilon>0$, the Euclidean unit sphere $\mathbb{S}^{d}$ can be covered by balls of radius $\varepsilon$ such that the number of balls overlapping some given point $x \in \mathbb{S}^d$ is bounded by $K$, which is a standard fact.

We move on to show the second part, i.e. the existence of $C_{\ball}>0$ such that, for any $r_0 >0$, $z \in S(r_0)$ and $A \ge 1$, the number of balls intersecting $B_{S(r_0)}(z,Ae^{-r_0})$ is upper-bounded by $C_{\ball}A^d$. Let $u_0 \in \mathbb{S}^d$ be the direction of $z$ and let $A \ge 1$. For $i \in \llbracket 1,N(r_0) \rrbracket$, the ball $B_{S(r_0)}(z_i,e^{-r_0})$ intersects $B_{S(r_0)}(z,Ae^{-r_0})$ if and only if $\widehat{z_i 0 z} \le (A+1)e^{-r_0}$. Thus
\begin{eqnarray*}
\bigcup_{\substack{1 \le i \le N(r_0),\\B_{S(r_0)}(z_i,e^{-r_0})\cap \\B_{S(r_0)}(z,Ae^{-r_0}) \neq \emptyset}} B_{S(r_0)}(z_i,e^{-r_0}) \subset B_{S(r_0)}(z,(A+2)e^{-r_0}).
\end{eqnarray*}
Recall that $\nu$ denotes the $d$-dimensional volume measure on $\mathbb{S}^d$. There exists $C>0$ such that, for any \dave{$r_0 >0$},
\begin{eqnarray*}
\nu\{u,~\widehat{u u_0} \le e^{-r_0}\} \ge Ce^{-r_0d},
\end{eqnarray*}\dave{where $\widehat{u u_0}$ is the angle made by the directions $u$ and $u_0$.}
Moreover,
\begin{eqnarray*}
\nu\{u,~\widehat{u u_0} \le (A+2)e^{-r_0}\} \le \left((A+2)e^{-r_0} \right)^d,
\end{eqnarray*}
thus the number of balls intersecting $B_{S(r_0)}(z,Ae^{-r_0})$ is upper-bounded by:
\begin{eqnarray*}
K\frac{\nu\{u,~\widehat{u u_0} \le (A+2)e^{-r_0}\}}{\nu\{u,~\widehat{u u_0} \le e^{-r_0}\}} \le \frac{K\left((A+2)e^{-r_0} \right)^d}{ce^{-r_0d}} \overset{A \ge 1}{\le} \frac{3^dK}{c} A^d,
\end{eqnarray*}
the conclusion follows.
\end{proof}

\bigbreak

\begin{proof}[Proof of Lemma \ref{Lemma:expdeca}]
Recall that, for $1 \le i \le N(r_0)$, $B_{S(r_0)}(z_i,e^{-r_0})$ is said to be inhibited if $B_{S(r_0)}(z_i,e^{-r_0}) \cap (S(r_0) \backslash \chi) \neq \emptyset$. Let us first estimate the probability that a given $z_i$ is inhibited.

Let  $1 \le i \le N(r_0)$. Let us consider the following events:
\begin{eqnarray*}
&&E(i):=\{\mathcal{N} \cap \Cone(z_i,(3A+1)e^{-r_0}) \cap (B(r_0+\delta) \backslash B(r_0))=\emptyset\}, \\
&&E'(i):=\{\mathcal{N} \cap \Cone(z_i,(A-1)e^{-r_0}) \cap (B(r_0) \backslash B((r_0-1)\wedge 0)) \neq \emptyset\}.
\end{eqnarray*}
We now show that $E(i) \cap E'(i) \subset \Activ(i)^c$. Let $z \in B_{S(r_0)}(z_i,e^{-r_0})$. By triangular inequality,
\begin{eqnarray*}
\Psi_1(r_0,z) \subset \Cone(z_i,(3A+1)e^{-r_0}) \cap (B(r_0+\delta) \backslash B(r_0)) &\text{ and }\\
\Cone(z_i,(A-1)e^{-r_0}) \cap (B(r_0+\delta) \backslash B(r_0)) \subset \Psi_2(r_0,z).
\end{eqnarray*}
Therefore, on the event $E(i) \cap E'(i)$, $z$ is good (i.e. $G(r_0,z)$ occurs). Thus, $\Activ(i)^c$ occurs, which shows that $E(i) \cap E'(i) \subset \Activ(i)^c$. It follows that $\mathbb{P}[\Activ(i)] \le \mathbb{P}[E(i)^c]+\mathbb{P}[E'(i)^c]$. Since
\begin{eqnarray*}
&&\Vol(\Cone(z_i,(3A+1)e^{-r_0}) \cap (B(r_0+\delta) \backslash B(r_0)))\\
&&=\int_{r_0}^{r_0+\delta} \nu\{u,~\widehat{u_i u}  \le (3A+1)e^{-r_0}\}\sinh(r)^d~dr \le C\delta A^d
\end{eqnarray*}
for some $C>0$ independent of $A,r_0,\delta$ and where $u_i$ is the direction of $z_i$. Thus
\begin{eqnarray*}
\mathbb{P}\left[ E(i)^c \right] \le 1-e^{-\lambda C\delta A^d}
\end{eqnarray*}
An analogous computation for $E'(i)$ leads to:
\begin{eqnarray*}
\mathbb{P}\left[ E'(i)^c \right] \le e^{-\lambda c A^d}
\end{eqnarray*}
for some $c>0$ independent of $A,r_0,\delta$.

\medbreak

We move on to show that $A$ and $\delta$ can be chosen such that
$\mathbb{P}[\Activ(i)] \le 2C_{\ball}^{-1}(6A+4)^{-d}/3$ for any $1 \le i \le N(r_0)$. Indeed, let us first chose $A$ such that $e^{-\lambda c A^d}C_{\ball}(6A+4)^d \le 1/3$. Then let us chose $\delta$ such that $1-e^{-\lambda C\delta A^d} \le C_{\ball}^{-1}(6A+4)^{-d}/3$. Hence
\begin{eqnarray*}
\tvc{\mathbb{P}[\Activ(i)]} \le \mathbb{P}\left[ E(i)^c \right]+\mathbb{P}\left[ E'(i)^c \right] \le 1-e^{-\lambda c\delta A^d}+e^{-\lambda c A^d} \le \frac{2}{3}C_{\ball}^{-1}(6A+4)^{-d}.
\end{eqnarray*}

We finally show that, for this choice of $A, \delta$, there exists $c_{\decay}>0$ such that for any $B$ large enough, $r_0 >0$ and $z \in S(r_0)$,
\begin{eqnarray*}
\mathbb{P}\left[ e^{r_0}\Rad(z)>B\right] \le e^{-c_{\decay}B}.
\end{eqnarray*}
Fix $r_0 >0$ and $z \in S(r_0)$. For given $k$, let us denote by $P(k)$ the set of sequences $z_{i_0}, \cdots, z_{i_k}$ among the $\{z_i,~1 \le i \le N(r_0) \}$ such that:
\begin{enumerate}
    \item $d(z_{i_0},z) \le e^{-r_0}$,
    \item $ \sup_{0\leq j\leq k-1} d(z_{i_j},z_{i_{j+1}}) \le (6A+4)e^{-r_0}$,
    \item $\sup_{0 \le j,j' \le k} d(z_{i_j},z_{i_{j'}}) \ge (6A+2)e^{-r_0}$.
\end{enumerate}
Let us also denote by $\hat{P}(k) \subset P(k)$ the set of sequences $z_{i_0}, \cdots, z_{i_k}$ verifying 1., 2., 3. and such that:

4. $z_{i_0},\cdots,z_{i_k}$ are inhibited.
\medbreak
It can be noticed that $\{\Rad(z) \ge k(6A+4)\} \subset \{\hat{P}(k) \neq \emptyset\}$ for any $k \in \mathbb{N}$, thus it is enough to upper-bound $\{\hat{P}(k) \neq \emptyset\}$ for any $k \in \mathbb{N}$.
Let $(z_{i_0}, \cdots, z_{i_k}) \in P(k)$. For $0 \le j \le k$, the event $\Activ(i_j)$ only depends on the Poisson process $\mathcal{N}$ inside $\Cone(z_{i_j},(3A+1)e^{-r_0})$, therefore, by 3. the events $\Activ(i_j)$ are mutually independent. Thus
\begin{eqnarray*}
\mathbb{P}[(z_{i_0},\cdots z_{i_k}) \in \hat{P}(k)]=\mathbb{P}[\Activ(i_0)]^{k+1} \le \left(\frac{2}{3}\right)^{k+1} C_{\ball}^{-(k+1)}(6A+4)^{-d(k+1)}.
\end{eqnarray*}
By Lemma \ref{Lemma:covering}, for any $1 \le i \le N(r_0)$, the number of balls intersecting $B_{S(r_0)}(z,\theta)$ is upper-bounded by $C_{\ball}e^{dr_0}\theta^d$. Thus
\begin{eqnarray*}
\#P(k) \le C_{\ball}^{k+1}(6A+4)^{d(k+1)}.
\end{eqnarray*}
It follows that
\begin{eqnarray*}
&&\mathbb{P}[\Rad(z) \ge k(6A+4)] \le \mathbb{P}[\hat{P}(k) \neq \emptyset] \le \mathbb{E}[\#\hat{P}(k)] \\
&&\le  C_{\ball}^{k+1}\left(\frac{2}{3}\right)^{k+1}(6A+4)^{d(k+1)}C_{\ball}^{-(k+1)}(6A+4)^{-d(k+1)} \le \left( \frac{2}{3} \right)^{k+1}.
\end{eqnarray*}
Lemma \ref{Lemma:expdeca} follows.
\end{proof}

\bigbreak

\begin{proof}[Proof of Lemma \ref{Lemma:numberofpoints}]
Let $r_0,M>0$ and $z \in S(r_0)$. Let $h \ge 0$ that will be fixed later. We divide the set \tvc{$L=\{z' \in \mathcal{N}\ : \ [z',A(z')]^* \cap B_{S(r_0)}(z,e^{-r_0}) \not=\emptyset \}$} into two subsets $L_{\le h}$ and $L_{>h}$ according to the length of $[z',A(z')]^*$:
\begin{eqnarray}
L_{\le h}:=\{z' \in L,~d(z',z) \le h\}, \quad L_{>h}:=\{z' \in L,~d(z',z)>h\}.
\end{eqnarray}
Thus $L=L_{\le h} \cup L_{>h}$, and
\begin{eqnarray}
\label{E:sumprobalh}
\mathbb{P}\left[\#L \ge M \right] \le \mathbb{P}\left[\#L_{\le h} \ge M \right]+\mathbb{P}\left[ L_{>h} \neq \emptyset \right].
\end{eqnarray}
We first upper-bound $\mathbb{P}\left[ \#L_{\le h} \ge M \right]$. Since $L_{\le h} \subset B(z,h)$,
\begin{eqnarray*}
\mathbb{P}\left[ \#L_{\le h} \ge M \right] \le \mathbb{P}\left[ \#\left( \mathcal{N} \cap B(z,h)\right) \ge M \right].
\end{eqnarray*}
By (\ref{E:volball}), $\Vol(B(z,h) \le C e^{dh}$, for some $C>0$ independent of $r_0$. So the random variable $\#\left( \mathcal{N} \cap B(z,h)\right)$ is stochastically dominated by a Poisson law with parameter $C\lambda e^{dh}$, thus, by the Chernoff bound for the Poisson distribution \cite{boundPoisson},
\begin{eqnarray}
\label{E:majorlhmoins}
\mathbb{P}\left[ \#L_{\le h} \ge M \right] \le \frac{e^{-C\lambda e^{dh}}(C\lambda e^{dh+1})^M}{M^M}.
\end{eqnarray}

The second step is to upper-bound $\mathbb{P}\left[ \#L_{>h} \neq \emptyset \right]$. 
\tvc{Recall that $\mathbb{P}_{z'}$ denotes the Palm distribution of $\mathcal{N}$ conditionally on having a point at $z'\in \mathbb{H}^{d+1}$, and that any $z'\in L$ is necessarily outside $B(r_0)$. By Campbell formula \cite{daley-verejones}:
\begin{align}
\mathbb{P}\left[ L_{>h} \neq \emptyset \right] \le \mathbb{E}\left[ \#L_{>h}\right] & 
= \mathbb{E}\left[ 
\sum_{z' \in \mathcal{N}} \ind_{ [z' ,A(z')]^* \cap  B_{S(r_0)}(z,e^{r_0}) \not=\emptyset}
\ind_{d(z',z)>h}
\right] \nonumber
\\
&=\lambda\int_{B(r_0)^c} \mathbb{P}_{z'}\left[ z' \in L_{>h} \right]~dz'.\label{etape3}
\end{align}
We will control the integrand in \eqref{etape3} on each of the following annuli centred at $z$:
\begin{equation}\label{eq:Cn}
\mathcal{C}_n:= \big(B(z,n+1)\setminus B(z,n)\big)\cap B(r_0)^c.
\end{equation}
Consider $z' \in L_{>h}$, by the triangular inequality, denoting by $z^*$ the meeting point of $[z',A(z')]^*$ and $S(r_0)$, $d(z',A(z')) \ge d(z',z^*) \ge d(z',z)-d(z^*,z)$. The hyperbolic law of cosines (\ref{E:cosinelaw}) gives,
\begin{eqnarray*}
&d(z^*,z)&=\arccosh\big(\cosh(r_0)^2-\cos(\widehat{z^*0z})\sinh(r_0)^2\big) \\
&&\le \arccosh\big(\cosh(r_0)^2-\cos(e^{-r_0})\sinh(r_0)^2\big)\\
&&=\arccosh\left(1+\left( 1-\cos(e^{-r_0})\right)\sinh(r_0)^2\right)\\
&&\le C_{\dist}
\end{eqnarray*}
for some $C_{\dist}>0$ independent of $r_0$. Hence, we have on the one hand that, for $z'\in \mathcal{C}_n\cap L_{> h}$,
\begin{align}
d(z',A(z')) \geq  & d(z',z^*)
\geq  d(z,z')-d(z,z^*)
\geq   n-C_{\dist},\label{etape4}
\end{align}where the first inequality uses that the distances are increasing functions along edges thanks to \eqref{E:defofstar}.
On the other hand, $z'\in \mathcal{C}_n\cap L_{> h}$, implies that
\begin{equation}\label{etape5}
h<d(z',z)<n+1.\end{equation}
Let us choose $h$ sufficiently large: $h\geq 2 C_{\dist}+1$. With this choice, every $n >h-1$ also satisfies $n-C_\dist \geq n/2$. Then, for any $z'\in \mathcal{C}_n \cap L_{>h}$, we have that $d(z',A(z'))\geq n/2$, which implies that $z'\notin B(n/2)$. Otherwise, we would have $A(z')=0$ and $d(z',A(z'))=d(z',0)<n/2$, which would be contradictory. Using this in \eqref{etape3},
\begin{align*}
\mathbb{P}\left[ L_{>h} \neq \emptyset \right] \le & \lambda \sum_{n=0}^{+\infty} \int_{\mathcal{C}_n} \mathbb{P}_{z'}\big(z'\in L_{>h} \big)\ dz'\\
\leq & \lambda \sum_{n>h-1} \int_{\mathcal{C}_n \cap B(n/2)^c} \mathbb{P}_{z'}\big(B^+(z',\frac{n}{2})\cap \mathcal{N}=\emptyset \big)\ dz'\\
\leq & \lambda \sum_{n>h-1} \int_{\mathcal{C}_n \cap B(n/2)^c} e^{-\lambda \Vol(B^+(z',n/2))}\ dz'.
\end{align*}Using the estimate \eqref{E:boundbplus} for the volume of balls with $r'\geq n/2$,
\begin{align}
\mathbb{P}\left[ L_{>h} \neq \emptyset \right] \le & \lambda \sum_{n=h}^{+\infty} e^{-\lambda c e^{dn/4}} \Vol\big(\mathcal{C}_n\big)\\
\leq & \lambda \sum_{n=h}^{+\infty} e^{-\lambda c e^{dn/4}+d(n+1)}\leq C e^{-ch}. \label{E:majorlhplus}
\end{align}
Finally, combining (\ref{E:sumprobalh}), (\ref{E:majorlhmoins}) and (\ref{E:majorlhplus}) with $h=-\ln(M/(2C\lambda))/d$ leads to Lemma \ref{Lemma:numberofpoints}.}
\end{proof}

\section{Proof of Propositions \ref{Prop:globalfluct} and \ref{Prop:straightness}}
\label{S:proofglobalfluct}

We first prove Proposition \ref{Prop:globalfluct}.
\paragraph{Step 1:}
Let us fix $p>3d/2$. For any $r_0>0$, $n \in \mathbb{N}$, let us define
\begin{eqnarray*}
S_n(A,r_0):=\sum_{z \in B_{S(r_0)}(u,Ae^{-r_0}) \cap \RST} \MBD_{r_0}^{r_0+n\delta}(z)^{2p}.
\end{eqnarray*}

The strategy of the proof is to construct a family of non-negative random variables \\
$(Y_n^M(A,r_0))_{r_0,A,M \ge 0,n \in \mathbb{N}}$ and $(Y^M(A,r_0))_{r_0,A,M \ge 0}$ such that
\begin{enumerate}[label=(\arabic*)]
\item almost surely, $Y_n^M(A,r_0) \uparrow Y^M(A,r_0)$ when $n \to \infty$ for any $M,A,r_0 \ge 0$;
\item $\sup_{A,r_0} \mathbb{P}\left[Y^M(A,r_0) \ge M \right]=O\left(M^{-2/3}\right)$ when $M \to \infty$;
\item the following implication holds almost surely:
\begin{eqnarray*}
S_n(A,r_0) \le (M \land Y_n^M(A,r_0))A^de^{-2r_0p} \implies S_{n+1}(A,r_0) \le Y_{n+1}^M(A,r_0)A^de^{-2r_0p}.
\end{eqnarray*}
\end{enumerate}

Let us suppose for the moment that such random variables $Y_n^M(A,r_0)$ and $Y^M(A,r_0)$ exist. Let $A,r_0 \ge 0$ and $M \ge 0$. On the event $\{Y^M(A,r_0) \le M\}$, it can be shown by induction that $S_n(A,r_0) \le MA^de^{-2r_0p}$ for any $n \ge 0$. Indeed, $S_0=0$, and if $S_n(A,r_0) \le M A^de^{-2r_0p}$, \dave{then, by (3),
\begin{align*}
S_{n+1}(A,r_0)\leq &  Y^M_{n+1}(A,r_0) A^d e^{-2r_0p}
\leq  Y^M(A,r_0)A^d e^{-2r_0p}\leq M A^d e^{-2r_0 p},
\end{align*}
since we are on the event $\{Y^M(A,r_0) \le M\}$.
This achieves the induction.
}

Thus, for any $A,r_0,M \ge 0$,
\begin{eqnarray}
\label{E:boundM}
\mathbb{P}[S_n(A,r_0) \ge MA^de^{-2r_0p}] \le \mathbb{P}[Y^M(A,r_0) \ge M] \le C M^{-2/3} \text{ by } (2).
\end{eqnarray}
for $M$ large enough and some constant $C>0$ independent of $A,r_0,M$.
It follows that\dave{
\begin{eqnarray}
\label{E:definecprime}
&C':&=\sup_{A,r_0}\mathbb{E}\left[ S_n(A,r_0)A^{-d}e^{2r_0p} \right]=\sup_{A,r_0} \int_0^\infty \mathbb{P}[S_n(A,r_0) \ge MA^{d}e^{-2r_0p}]~dM \nonumber\\
&&\overset{(\ref{E:boundM})}{\le} \int_0^\infty CM^{-2/3}~dM<\infty.
\end{eqnarray}}
Let $K:=\#\left(\mathcal{N} \cap B_{S(r_0)}(u,Ae^{-r_0})\right)$. Let us apply Cauchy-Schwartz with the inner product defined by $\langle X,Y\rangle=\mathbb{E}\left[\sum_i X_iY_i \right]$,
\begin{eqnarray}
\label{E:cauchyschwartzfinal0}
&&\mathbb{E}\left[ \sum_{z \in B_{S(r_0)} (u,Ae^{-r_0}) \cap \RST} \left( \MBD_{r_0}^{r_0+n\delta}(z) \right)^p \right] \nonumber\\
&&\le \mathbb{E}\left[ \sum_{z \in B_{S(r_0)} (u,Ae^{-r_0}) \cap \RST} \left( \MBD_{r_0}^{r_0+n\delta}(z) \right)^{2p} \right]^{1/2}\mathbb{E}\left[ \#B_{S(r_0)}(u,Ae^{-r_0}) \right]^{1/2} \nonumber\\
&&=\mathbb{E}\left[S_n(A,r_0)\right]^{1/2}\mathbb{E}\left[ \#B_{S(r_0)}(u,Ae^{-r_0}) \right]^{1/2} \nonumber\\ &&\overset{(\ref{E:definecprime})}{\le} C'\dave{A^{d}e^{-2r_0p}}\mathbb{E}\left[ \#B_{S(r_0)}(u,Ae^{-r_0}) \cap \RST \right]^{1/2}.
\end{eqnarray}
Let us show that $\mathbb{E}\left[ \#B_{S(r_0)}(u,Ae^{-r_0}) \cap \RST \right] \le CA^d$ for some $C>0$ independent of $A,r_0$. We use the covering of $S(r_0)$ by balls of radius $e^{-r_0}$ introduced by Lemma \ref{Lemma:covering} in Section \ref{S:proofcontrolannulus}. For any $1 \le i \le N(r_0)$, by Proposition \ref{Lemma:numberofpoints} applied with $p=1$, $\mathbb{E}[\#\RST \cap B_{S(r_0)}(z_i,e^{-r_0})] \le C$ for $C$ independent of $r_0$, $z_i$. By Lemma \ref{Lemma:covering}, the number of balls intersecting $B_{S(r_0)}(u,Ae^{-r_0})$ is bounded by $C_{\ball}A^d$. It follows that $\mathbb{E}\left[ \#B_{S(r_0)}(u,Ae^{-r_0}) \cap \RST \right] \le CA^d$.

Thus, resuming to (\ref{E:cauchyschwartzfinal0}),
\begin{eqnarray}
\label{E:cauchyschwartzfinal}
\mathbb{E}\left[ \sum_{z \in B_{S(r_0)} (u,Ae^{-r_0}) \cap \RST} \left( \MBD_{r_0}^{r_0+n\delta}(z) \right)^p \right] \le Ce^{-r_0p}.
\end{eqnarray}

Since $r \mapsto \MBD_{r_0}^r(z)$ is non-decreasing for any $z \in S(r)$,
\begin{eqnarray}
\label{E:peqnstep1}
\sum_{z \in B_{S(r_0)} (u,Ae^{-r_0}) \cap \RST} \left( \MBD_{r_0}^\infty(z) \right)^p =\lim_{n \to \infty} \uparrow \sum_{z \in B_{S(r_0)} (u,Ae^{-r_0}) \cap \RST} \left( \MBD_{r_0}^{r_0+n\delta}(z) \right)^p.
\end{eqnarray}
Proposition \ref{Prop:globalfluct} follows by (\ref{E:cauchyschwartzfinal}) and by monotone convergence theorem.

\paragraph{Step 2:} we build the random variables $Y_n^M(A,r_0)$ and $Y^M(A,r_0)$, as presented in the beginning of Step 1.

Let $A,r_0 > 0$, let $n \in \mathbb{N}$. The strategy is to upper-bound $S_{n+1}$ \tvc{as a function of} $S_n$. Fix $z \in B_{S(r_0)}(u,Ae^{-r_0})$. The quantity $\MBD_{r_0}^{r_0+n\delta}$ takes into account finite backward paths that stop before level $r_0+n\delta$ and those (potentially infinite) that continue after level $r_0+n\delta$. Let us define the random set $\Stop(z)$ as the set of ending points (in the backward direction) of finite paths from $z$ stopping before level $r_0+n\delta$:
\begin{eqnarray*}
\Stop(z):=\{ z'=(r';u') \in \mathcal{N} \cap \mathcal{D}(z),~r_0 \le r' \le r_0+n\delta,~A^{-1}(z')=\emptyset\} \subset \mathcal{N}.
\end{eqnarray*}This definition is similar to \eqref{eq:def_Stop}, but with $r_0+n\delta$ instead of $r_0+\delta$).
By definition of $\MBD_{r_0}^{r_0+n\delta}(z)$ (resp. $\MBD_{r_0}^{r_0+(n+1)\delta}(z)$) (Definition \ref{def:defmbd}),
\begin{eqnarray}
\label{E:redefmbd1}
\MBD_{r_0}^{r_0+n\delta}(z)&=&\max_{z'=(r';u') \in \Stop(z)} \CFD_{r_0}^{r'}(z') \vee \max_{z' \in \mathcal{D}_{r_0}^{r_0+n\delta}(z)} \CFD_{r_0}^{r_0+n\delta}(z')
\end{eqnarray}
and
\begin{eqnarray}
\label{E:redefmbd2}
&&\MBD_{r_0}^{r_0+(n+1)\delta}(z) \\
&&=\max_{z'=(r';u') \in \Stop(z)} \CFD_{r_0}^{r'}(z') \vee \max_{z' \in \mathcal{D}_{r_0}^{r_0+n\delta}(z)} \left(\CFD_{r_0}^{r_0+n\delta}(z')+\MBD_{r_0+n\delta}^{r_0+(n+1)\delta}(z')\right). \nonumber
\end{eqnarray}
For any $p \ge 1$, $a,b \ge 0$ and $t \in [0,1]$, Jensen inequality gives,
\begin{eqnarray}
\label{E:jensen}
(a+b)^p=\left(t\frac{a}{t}+(1-t)\frac{b}{1-t}\right)^p \le t\left(\frac{a}{t}\right)^p+(1-t)\left(\frac{b}{1-t}\right)^p=t^{1-p}a^p+(1-t)^{1-p}\, b^p.
\end{eqnarray}

Applying (\ref{E:jensen}) with $t=1/n^2$ leads to:
\begin{eqnarray}
&&\MBD_{r_0}^{r_0+(n+1)\delta}(z)^{2p} \nonumber\\
&&\overset{(\ref{E:redefmbd2})}{=}\max_{z'=(r';u') \in \Stop(z)} \CFD_{r_0}^{r'}(z')^{2p} \vee \max_{z' \in \mathcal{D}_{r_0}^{r_0+n\delta}(z)} \left(\CFD_{r_0}^{r_0+n\delta}(z')+\MBD_{r_0+n\delta}^{r_0+(n+1)\delta}(z')\right)^{2p} \nonumber\\
&& \le \max_{z'=(r';u') \in \Stop(z)} \CFD_{r_0}^{r'}(z')^{2p} \vee \nonumber\\
&& \max_{z' \in \mathcal{D}_{r_0}^{r_0+n\delta}(z)}\left[\left(1-\frac{1}{n^2} \right)^{1-{2p}}\left(\CFD_{r_0}^{r_0+n\delta}(z')^{2p}\right)+n^{4p-2}\left( \MBD_{r_0+n\delta}^{r_0+(n+1)\delta}(z')^{2p} \right) \right], \nonumber
\end{eqnarray}\dave{by using \eqref{E:jensen}. Now, since $1-2p<0$ and $1-1/n^2\in (0,1)$,}
\begin{eqnarray}
&&\MBD_{r_0}^{r_0+(n+1)\delta}(z)^{2p} \nonumber\\
&&\le \left(1-\frac{1}{n^2} \right)^{1-{2p}}\left[\max_{z'=(r';u') \in \Stop(z)} \left(\CFD_{r_0}^{r'}(z')^{2p}\right) \vee \max_{z' \in \mathcal{D}_{r_0}^{r_0+n\delta}(z)} \left(\CFD_{r_0}^{r_0+n\delta}(z')^{2p}\right) \right] \nonumber\\
&&+n^{4p-2}\max_{z' \in \mathcal{D}_{r_0}^{r_0+n\delta}(z)}\left[ \MBD_{r_0+n\delta}^{r_0+(n+1)\delta}(z')^{2p}\right] \nonumber\\
&&\overset{(\ref{E:redefmbd1})}{=}\left(1-\frac{1}{n^2} \right)^{1-{2p}}\MBD_{r_0}^{r_0+n\delta}(z)^{2p}+n^{4p-2}\max_{z' \in \mathcal{D}_{r_0}^{r_0+n\delta}(z)}\left[ \MBD_{r_0+n\delta}^{r_0+(n+1)\delta}(z')^{2p}\right].
\label{E:globalflucteqn1}
\end{eqnarray}
Summing (\ref{E:globalflucteqn1}) over all $z \in B_{S(r_0)}(u,Ae^{-r_0})$ leads to:
\begin{multline}
\label{E:globalflucteqn2}
 S_{n+1} (A,r_0) \le \left(1-\frac{1}{n^2} \right)^{1-2p}S_n (A,r_0) \\+n^{4p-2} \left( \sum_{\substack{z \in B_{S(r_0)}(u,Ae^{-r_0})\\z' \in \mathcal{D}_{r_0}^{r_0+n\delta}(z)}} \MBD_{r_0+n\delta}^{r_0+(n+1)\delta}(z')^{2p} \right).
\end{multline}

Let us \tvc{condition} on the event $\{S_n(A,r_0) \le MA^de^{-2r_0p}\}$. Then, for any $z \in B_{S(r_0)}(u,Ae^{-r_0})$,
\begin{eqnarray*}
\MBD_{r_0}^{r_0+n\delta}(z)^{2p} \le S_n (A,r_0) \le MA^de^{-2r_0p},
\end{eqnarray*}
so, for any $z' \in \mathcal{D}_{r_0}^{r_0+n\delta}(z)$, $\widehat{z'0z} \le M^{1/{2p}}A^{d/(2p)}e^{-r_0}$. \dave{Denoting by $z_\infty \in \partial \mathbb{H}^{d+1}$ the point of direction $u$, we have}  \begin{eqnarray*}
&\widehat{z'0z_\infty} &\le \widehat{z'0z}+\widehat{z0z_\infty} \\
&&\le M^\frac{d}{2p}A^\frac{d}{2p}e^{-r_0}+Ae^{-r_0}, \text{ since } z \in B_{S(r_0)}(u,Ae^{-r_0})\\
&&\le Ae^{-r_0}\left(M^\frac{d}{2p}+1\right), \text{ since } \frac{d}{2p}\le 1.
\end{eqnarray*}
Therefore, for any $z \in B_{S(r_0)}(u,Ae^{-r_0})$,
\begin{eqnarray}
\label{E:globalflucteqn3}
\mathcal{D}_{r_0}^{r_0+n\delta}(z) \subset B_{S(r_0+n\delta)}\left(u,Ae^{-r_0}\left(M^\frac{d}{2p}+1\right)\right).
\end{eqnarray}
Let us define
\begin{eqnarray}
\label{E:defzn}
Z_n(A,r_0):=\sum_{z' \in B_{S(r_0+n\delta)}(u,Ae^{-r_0}(M^{d/(2p)}+1)) \cap \RST} \MBD_{r_0+n\delta}^{r_0+(n+1)\delta}(z')^{2p}.
\end{eqnarray}
By (\ref{E:globalflucteqn3}),
\begin{eqnarray}
\label{E:globalflucteqn4}
\sum_{\substack{z \in B_{S(r_0)}(u,Ae^{-r_0})\\z' \in \mathcal{D}_{r_0}^{r_0+n\delta}(z)}} \MBD_{r_0+n\delta}^{r_0+(n+1)\delta}(z')^{2p} \le Z_n(A,r_0),
\end{eqnarray}
thus, combining (\ref{E:globalflucteqn2}) and (\ref{E:globalflucteqn4}), on the event \{$S_n \le MA^de^{-2r_0p}$\},
\begin{eqnarray}
\label{E:globalflucteqn5}
S_{n+1}(A,r_0) \le \left(1-\frac{1}{n^2} \right)^{1-2p}S_n(A,r_0)+n^{4p-2}Z_n(A,r_0).
\end{eqnarray}
This upper-bound of $S_{n+1}(A,r_0)$ suggests the following definition of the random variables $Y_n^M(A,r_0)$. We set $Y_0^M(A,r_0):=0$, and for any $n \ge 0$,
\begin{eqnarray*}
Y_{n+1}^M(A,r_0):=\left(1-\frac{1}{n^2}\right)^{1-2p}Y_n^M(A,r_0)+n^{4p-2}A^{-d}e^{2r_0p}Z_n(A,r_0).
\end{eqnarray*}
Let us also define $Y^M(A,r_0):=\lim_{n \to \infty} \uparrow Y_n^M(A,r_0)$ (this is well-defined since $n \to Y_n^M(A,r_0)$ is non-decreasing).\\

We first show that the random variables $Y_n^M(A,r_0)$ verify (3) of Step 1:
for $n \in \mathbb{N}$, on the event $\{S_n(A,r_0) \le (M \land Y_n^M(A,r_0))A^de^{-2r_0p}\}$, by (\ref{E:globalflucteqn5}),
\begin{eqnarray*}
&S_{n+1}(A,r_0) &\le \left(1-\frac{1}{n^2} \right)^{1-2p}S_n(A,r_0)+n^{4p-2}Z_n(A,r_0) \nonumber\\
&&\le \left(1-\frac{1}{n^2} \right)^{1-2p}Y_n^M(A,r_0)A^de^{-2r_0p}+n^{4p-2}Z_n(A,r_0) \nonumber\\
&&=A^de^{-2r_0p}\left[ \left(1-\frac{1}{n^2} \right)^{1-2p}Y_n^M(A,r_0)+n^{4p-2}A^{-d}e^{2r_0p}Z_n(A,r_0)\right] \nonumber\\
&&= A^de^{-2r_0p}Y_{n+1}^M(A,r_0).
\end{eqnarray*}
Thus the random variables $Y_n^M(A,r_0)$ verify (3).\\

We move on to show that $(Y_n^M(A,r_0))_{n,M,A,r_0}$ and $(Y^M(A,r_0))_{M,A,r_0}$ also verify (2) of Step 1. To proceed, we upper-bound $\mathbb{E}[Y_n^M(A,r_0)]$ by induction on $n$.

For any $M,A,r_0,n$, Proposition \ref{Prop:controlannulus} applied for $\theta=Ae^{-r_0}\left(M^{\dave{d}/(2p)}+1\right)$ gives,
\begin{eqnarray}
\label{E:globalflucteqn6}
&\mathbb{E}[Z_n(A,r_0)] &\le C\left(Ae^{-r_0}\left(M^\frac{\dave{d}}{2p}+1\right)\right)^d e^{(d-2p)(r_0+n\delta)}\nonumber\\
&&=\dave{C A^d\left(M^\frac{d}{2p}+1\right)^d e^{-2pr_0+n(d-2p)\delta}.}
\end{eqnarray}
Let us define, for any $n \in \mathbb{N}$,
\begin{eqnarray}
\label{E:straighteqn6}
p(n):=\left(1-\frac{1}{n^2}\right)^{1-2p}, \quad q(n):=n^{4p-2}e^{n(d-2p)\delta}
\end{eqnarray}
and
\begin{eqnarray}
P(n):=\Pi_{k=0}^{n-1} p(k), \quad Q(n):=\sum_{k=0}^{n-1} q(k).
\end{eqnarray}
with the convention $P(0)=1$ and $Q(0)=0$. It can be noticed that
\begin{eqnarray}\label{limit:PnQn}
\lim_{n \to \infty} P(n)<\infty, \quad \lim_{n \to \infty} Q(n)<\infty \text{ since } d-2p<0.
\end{eqnarray}
Let us show by induction on $n$ that $\mathbb{E}[Y_n^M(A,r_0)] \le C(M^{d/(2p)}+1)^dP(n)Q(n)$ for any $n \in \mathbb{N}$. The assertion is clear for $n=0$ and, for $n \ge 0$,
\begin{eqnarray}
&\mathbb{E}[Y_{n+1}^M(A,r_0)]&=
\left(1-\frac{1}{n^2}\right)^{1-2p}\mathbb{E}\left[Y_n^M(A,r_0)\right]+n^{4p-2}A^{-d}e^{2r_0p}\mathbb{E}\left[Z_n(A,r_0)\right] \nonumber\\
&&=p(n)\mathbb{E}[Y_n^M(A,r_0)]+A^{-d}e^{2r_0p-n(d-2p)\delta}q(n)\mathbb{E}\left[Z_n(A,r_0)\right] \nonumber\\
&&\overset{(\ref{E:globalflucteqn6})}{\le}p(n)\mathbb{E}[Y_n^M(A,r_0)]+C\left( M^\frac{d}{2p}+1 \right)^dq(n) \nonumber\\
&&\le C\left( M^\frac{d}{2p}+1 \right)^d[p(n)P(n)Q(n)+q(n)] \text{ by induction hypothesis}\nonumber\\
&&=C\left( M^\frac{d}{2p}+1 \right)^d[P(n+1)Q(n)+q(n)] \nonumber\\
&&\le C\left( M^\frac{d}{2p}+1 \right)^dP(n+1)Q(n+1) \text{ since } P(n+1) \le 1,
\end{eqnarray}
which achieves the induction. Thus, by \eqref{limit:PnQn}, there exists some constant $C>0$ such that, for any $M,A,r_0 \ge 0$, for any $n \in \mathbb{N}$, $\mathbb{E}\left[ Y_n^M(A,r_0) \right] \le C\left( M^{d/(2p)}+1 \right)^d$. By monotone convergence,
\begin{eqnarray*}
\mathbb{E}\left[Y^M(A,r_0)\right] \le C\left( M^{1/(2p)}+1 \right)^d.
\end{eqnarray*}
Thus, for any $M,A,r_0 \ge 0$, Markov inequality gives,
\begin{eqnarray*}
\mathbb{P}\left[ Y^M(A,r_0) \ge M \right] \le \frac{C\left(M^{1/(2p)}+1\right)^d}{M}=O(M^{-2/3})
\end{eqnarray*}
since $2p>3d$. Thus the family of random variables $Y^M(A,r_0)$ verifies (2). This achieves the proof.

\paragraph{Proof of Proposition \ref{Prop:straightness}.}

This is a direct consequence of Proposition \ref{Prop:globalfluct}. Let $\varepsilon>0$ and let us choose $p$ such that $d/p<\varepsilon$. Applying Proposition \ref{Prop:globalfluct} with $A=\pi e^{r_0}$ gives that, for any $r_0 \ge 0$,
\begin{eqnarray}
\label{E:straighteqn1}
\mathbb{E}\left[\sum_{z \in S(r_0) \cap \RST} \left( \MBD_{r_0}^\infty(z) \right)^p \right] \le Ce^{r_0(d-p)}.
\end{eqnarray}
Thus:
\begin{eqnarray*}
&&\mathbb{E}\left[\sum_{n \in \mathbb{N}} \left( \max_{z \in \mathcal{L}_{n}} \MBD_{n}^\infty(z) e^{(1-\varepsilon)n}\right)^p \right] \le \sum_{n \in \mathbb{N}} Ce^{n((d-p)+(1-\varepsilon)p)}<\infty
\end{eqnarray*}
since $\varepsilon>d/p$. Therefore, a.s.,
\begin{eqnarray*}
\lim_{n \to \infty} e^{(1-\varepsilon)n} \max_{z \in \mathcal{L}_{n}} \MBD_{n}^\infty(z) \to 0 \text{ as } n \to \infty.
\end{eqnarray*}
Moreover, $r_0 \mapsto \max_{z \in \mathcal{L}_{r_0}}\MBD_{r_0}^\infty$ is non-increasing, so for any $n \le r_0<n+1$,
\begin{eqnarray}
e^{(1-\varepsilon)r_0}\max_{z \in \mathcal{L}_{r_0}}\MBD_{r_0}^\infty(z) \le \max_{z \in \mathcal{L}_n}\MBD_{n}^\infty(z) e^{(1-\varepsilon)r_0} \le e^{1-\varepsilon}\max_{z \in \mathcal{L}_n}\MBD_{n}^\infty(z) e^{(1-\varepsilon)n},
\end{eqnarray}
thus
\begin{eqnarray*}
\lim_{r_0 \to \infty} e^{(1-\varepsilon)r_0} \max_{z \in \mathcal{L}_{r_0}} \MBD_{r_0}^\infty(z)  \to 0 \text{ as } r_0 \to \infty.
\end{eqnarray*}
Define $R_0$ such that, for any $r_0 \ge R_0$, $e^{(1-\varepsilon)r_0}\max_{z \in \mathcal{L}_{r_0}} \MBD_{r_0}^\infty(z) \le 1/2$. For any $r_0 \ge R_0$, $z \in \mathcal{L}_{r_0}$, $z_1,z_2 \in \mathcal{D}(z)$, defining $r_1:=d(0,z_1)$ and $r_2:=d(0,z_2)$,
\begin{eqnarray}
&\widehat{z_1 0 z_2} &\le \widehat{z_1 0 z}+\widehat{z 0 z_2} \le \CFD_{r_0}^{r_1}(z_1)+\CFD_{r_0}^{r_2}(z_2) \le \MBD_{r_0}^{r_1}(z)+\MBD_{r_0}^{r_2}(z)  \nonumber\\
&&\le 2\MBD_{r_0}^\infty(z) \le 2\max_{z' \in \mathcal{L}_{r_0}}\MBD_{r_0}^\infty(z') \le e^{-(1-\varepsilon)r_0}.
\end{eqnarray}
This achieves the proof of  Proposition \ref{Prop:straightness}.

\appendix

\section{Proof of Proposition \ref{Prop:firstprop}}
\label{S:firstprop}

We first show that the RST is a tree. If the $\RST$ contains some loop $z_0,\cdots,z_n$, then the furthest vertex to the origin in the loop, say $z_i$, must have  two parents, which contradicts the definition of the RST. Moreover, for some given vertex $z \in \mathcal{N}$, the sequence $\left(d\left(A^{(k)}(z),0\right)\right)_k$ is decreasing. In addition, since $\mathcal{N} \cap B(r)$ is finite for any $r \ge 0$, there is no infinite decreasing sequence $\left(d\left(A^{(k)}(z),0\right)\right)_k$. Thus $A^{(k)}(z)=0$ for some finite $k \ge 0$. Therefore, the RST is a connected graph, so it is a tree. \\

We move on to show that the RST is locally finite. Let us assume for the moment that, for any $z=(r;u) \in \mathbb{H}^{d+1}$ and $\rho>0$,
\begin{eqnarray}
\label{E:boundbplus}
\Vol(B^+(z,\rho)) \ge ce^{d(\rho \wedge r)/2}.
\end{eqnarray}
for some $c$ independent of $z,\rho$. \\
For $z_0=(r_0;u_0) \in \mathcal{N} \cup \{0\}$, $z=(r;u) \in \mathbb{H}^d$, let us define
\begin{eqnarray*}
a(z,z_0)=\mathbf{1}_{r>r_0}\mathbf{1}_{B^+(z,d(z,z_0)) \cap \mathcal{N}=\emptyset}.
\end{eqnarray*}
For any $z \in \mathcal{N}$, $z_0=A(z)$ if and only if $a(z,z_0)=1$.
By Campbell formula \cite{daley-verejones},
\begin{eqnarray*}
\mathbb{E}\left[ \#\{z_0 \in \mathcal{N},~\#A^{(-1)}(z_0)=\infty\} \right]&=&\mathbb{E}\left[ \sum_{z_0 \in \mathcal{N}} \mathbf{1}_{\sum_{z \in \mathcal{N}} a(z,z_0)=\infty} \right]
\\
&=& 
\lambda\int_{\mathbb{H}^{d+1}} \mathbb{P}_{z_0}\left[\sum_{z \in \mathcal{N}}a(z,z_0)=\infty\right]~dz_0,
\end{eqnarray*}where we recall that $\mathbb{P}_{z_0}$ is the Palm measure conditioned on having an atom at $z_0$. Thus it suffices to show that $\mathbb{P}_{z_0}\left[\sum_{z \in \mathcal{N}}a(z,z_0)=\infty\right]=0$ for any $z_0 \in \mathbb{H}^{d+1}$. Let $z_0=(r_0;u_0) \in \mathbb{H}^{d+1}$. Note that, if $d(z,z_0) \ge r_0$, then $0 \in B^+(z,d(z,z_0))$ so $a(z,z_0)=0$. Thus, 
\begin{eqnarray*}
&\mathbb{E}\left[ \sum_{z \in \mathcal{N}} a(z,z_0) \right]&=\lambda\int_{\mathbb{H}^{d+1}} \mathbb{E}\left[ a(z,z_0) \right]~dz \le \lambda\int_{\mathbb{H}^{d+1}} \mathbf{1}_{d(z,z_0)<r_0} \mathbb{P}[B^+(z,d(z,z_0)) \cap \mathcal{N}=\emptyset]~dz\\
&&=\lambda\int_{\mathbb{H}^{d+1}} \mathbf{1}_{d(z,z_0)<r_0} \mathbb{P}\left[ \exp(-\lambda \Vol(B^+(z,d(z,z_0))) \right]~dz\\
&&\overset{(\ref{E:boundbplus})}{\le} \lambda\int_{\mathbb{H}^{d+1}} \mathbb{P}\left[ \exp(-\lambda c e^{-d/2~(d(z,z_0)\wedge d(0,z))})\right]~dz\\
&&\overset{(\ref{E:dvolpolar})}{=}\lambda \nu(\mathbb{S}^d) \int_0^{\infty} \exp(-\lambda c e^{-rd/2})\sinh(r)^d~dr <+\infty,
\end{eqnarray*}
where $\nu(\mathbb{S}^d)$ is the surface area of the Euclidean unit ball $\mathbb{S}^d$. Thus $\mathbb{P}\left[\sum_{z \in \mathcal{N}} a(z,z_0)=\infty\right]=0$.\\

\tvc{It remains to show (\ref{E:boundbplus}). Recall that $z=(r;u)$ and that $\rho \ge 0$. Let us introduce $z' =(r-r\wedge \rho ; u)$ and $z''=(r-\frac{r\wedge \rho}{2} ; u)$. The latter point is the center of the geodesic $[z',z]$ (which is here also the segment $[z',z]$ as these points are aligned with $0$). 
We have that 
\begin{equation}\label{etape1}B(z'',\frac{r\wedge \rho}{2})\subset B^+(z,r\wedge \rho)\subset  B^+(z,\rho).\end{equation}
The second inclusion is obvious as $r\wedge \rho\leq \rho$. For the first inclusion, let us consider $x\in B(z'',\frac{r\wedge \rho}{2})$. Since
\[d'(x,z)\leq d(x,z'')+d(z'',z)\leq \frac{r\wedge \rho}{2}+\frac{r\wedge \rho}{2}=r\wedge \rho,\]
we have $B(z'',\frac{r\wedge \rho}{2})\subset B(z,r\wedge \rho)$. Moreover,
\[d(x,0)\leq d(x,z'')+d(z'',0)\leq \frac{r\wedge \rho}{2}+\big(r-\frac{r\wedge \rho}{2}\big)=r,\]
so $B(z'',\frac{r\wedge \rho}{2})\subset B(r)$. From these two inclusions, we deduce the first inclusion in \eqref{etape1}.\\
As a consequence, 
\begin{align*}
\Vol\big(B^+(z,\rho)\big)\geq   &  \Vol\big(B\big(z'',\frac{r\wedge \rho}{2} \big)\big)
=  \Vol\big(B\big(\frac{r\wedge \rho}{2} \big)\big)= \nu(\mathbb{S}^d)  \int_0^{\frac{r\wedge \rho}{2}} \sinh^d(t) \ dt \geq c(d) e^{d\frac{r\wedge \rho}{2}},
\end{align*}where the last inequality holds when $r$ and $\rho$ are sufficiently far from $0$. 
}


It remains to show that the geodesics $[z,A(z)]$ for $z \in \mathcal{N}$ do not cross a.s. in the bi-dimensional case ($d=1$). Let us suppose that there are no two points $z_1$, $z_2$ with $d(0,z_1)=d(0,z_2)$ (this happens with probability $0$). Let $z_1=(r_1;u_1),z_2=(r_2;u_2) \in \mathcal{N}$ and let us set $A(z_1):=(r'_1;u'_1)$, $A(z_2):=(r'_2;u'_2)$. Suppose that $[z_1,A(z_1)]$ and $[z_2,A(z_2)]$ meet at some point $P_{hyp}:=(r_{hyp};u_{hyp})$. We have $r'_1<r_{hyp}<r_2$, thus by definition of the parent, $d(z_2,A(z_2))<d(z_2,A(z_1))$. Then
\begin{eqnarray}
&d(z_2,P_{hyp})+d(P_{hyp},A(z_2))&=d(z_2,A(z_2))<d(z_2,A(z_1)) \nonumber\\
&&\le d(z_2,P_{hyp})+d(P_{hyp},A(z_1)),
\end{eqnarray}
so $d(P_{hyp},A(z_2))<d(P_{hyp},A(z_1))$. On the other hand, interchanging $z_1$ and $z_2$ in the previous calculation leads to $d(P_{hyp},A(z_1))<d(P_{hyp},A(z_2))$. This is a contradiction. Therefore $[z_1,A(z_1)] \cap [z_2,A(z_2)]=\emptyset$. This achieves the proof of Proposition \ref{Prop:firstprop}.

\bibliographystyle{plain}


\end{document}